\theoremstyle{plain} 
\newtheorem{theorem}[subsection]{Theorem}
\newtheorem{proposition}[subsection]{Proposition}
\newtheorem{lemma}[subsection]{Lemma}
\newtheorem{corollary}[subsection]{Corollary}
\theoremstyle{definition}
\newtheorem{definition}[subsection]{Definition}
\theoremstyle{remark}
\newtheorem*{remark*}{Remark}
\numberwithin{equation}{subsection}
\newcommand{\set}[1]{\{#1\}}
\newcommand{\mset}[1]{\set{\,#1\,}}
\newcommand{\pair}[1]{\langle #1\rangle}
\newcommand{\mpair}[1]{\pair{\,#1\,}}
\DeclareMathOperator{\sgn}{{\mathrm{sgn}}}
\DeclareMathOperator{\cone}{{\mathrm{cone}}}
\DeclareMathOperator{\Hom}{{\mathrm{Hom}}}
\definecolor{Mcolor}{rgb}{0,0,1}
\definecolor{Wcolor}{rgb}{1,0,0}
\definecolor{lightgray}{rgb}{0.6,0.6,0.6}
\begin{document}

\title[Oriented matroid structures from root systems]{Oriented matroid structures\\ from  realized root systems}

\author{Matthew Dyer}
\address{Department of Mathematics\\ 255 Hurley Building \\ University of Notre Dame \\Notre Dame \\ Indiana 46556 \\U.S.A.}
\email{dyer@nd.edu}
\author{Weijia Wang}
\address{School of Mathematics (Zhuhai)
\\ Sun Yat-sen University \\
Zhuhai, Guangdong, 519082 \\ China}
\email{wangweij5@mail.sysu.edu.cn}

\begin{abstract}
This paper investigates the question of uniqueness of the reduced  oriented matroid structure arising  from   root systems of a Coxeter group in real vector spaces. We settle the question for finite Coxeter groups,  irreducible affine Weyl groups and all rank three Coxeter groups. In these cases, the oriented  matroid structure  is unique unless $W$ is of type $\widetilde{A}_n, n\geq 3$, in which case there are three possibilities.
\end{abstract}

\maketitle

\section{Introduction}
Given a Coxeter system $(W,S)$, one can associate to it different  root
systems $\Phi$ in a real vector space, parameterized by (possibly non-integral)
generalized Cartan matrices (NGCMs for short). Such root systems  play crucial roles in
understanding various mathematical structures, particularly those arising from Lie theory. They naturally determine oriented matroids in the sense of \cite{largeconvex}. One may ask if  different (possibly non-reduced) root system realizations  yield non-isomorphic reduced  oriented matroid structures when transferred to the abstract root system $T\times \{\pm 1\}$ with natural $W$-action, where $T$ is the set of reflections in $W$ (see \cite{Bourbaki}, \cite{bjornerbrenti}).
The transfer is defined as follows. Let $\Phi$ be a realized root system with the oriented matroidal closure operator $\cone_{\Phi}(\Gamma)=\cone(\Gamma)\cap \Phi, \Gamma\subseteq  \Phi.$ Here $\cone(\Gamma)=\{\sum_{i\in I}k_iv_i|v_i\in \Gamma\cup\{0\}, k_i\in \mathbb{R}_{\geq 0}, |I|<\infty\}$. We have a canonical $W\times\set{\pm 1}$-equivariant surjection $\theta\colon\Phi\to T\times \{\pm 1\}$  given by
$$\epsilon\alpha\mapsto (s_{\alpha},\epsilon),\qquad \alpha\in \Phi^+,\epsilon\in \{\pm 1\}.$$
Then we transfer $\cone_{\Phi}$  to an  oriented matroid  closure operator $c_{\Phi}$ on $T\times \{\pm 1\}$ given by $c_{\Phi}(A)=\theta(\cone_{\Phi}(\theta^{-1}A))$ for $A\subseteq T\times\set{\pm 1}$ and consider  the corresponding (reduced) oriented matroid structure on $T\times\set{\pm 1}$.

In this paper we show that for a finite Coxeter group, an
 irreducible  affine Weyl group or a rank 3 Coxeter group, this induced oriented matroid structure is independent of the realized root system $\Phi$ except for type $\widetilde{A}_n, n\geq 3$. For an affine Weyl group of type $\widetilde{A}_n, n\geq 3$, there are 3 different structures. Type $\widetilde{A}_n$ is the most interesting case and is the main focus of this paper. We show that after rescaling, realized root systems of type $\widetilde{A}_n$ can be parameterized by a positive real number $v$.
We then give an explicit general form of the values of chirotope maps as a (Laurent polynomial) function of $v$. For rank three Coxeter systems, we prove the uniqueness by an argument involving homotopies of root systems. Finally, we give an example, touching on our motivations for this work,
illustrating the fact that  reflection orders
(\cite{bjornerbrenti}) don't relate well to the geometry of realized root systems, and ask how they relate to possibly non-realizable oriented matroid structures on root systems.

\section{Coxeter groups, realized root systems and oriented matroids}

In this section, we collect basic concepts of Coxeter groups and oriented matroids.

\subsection{Oriented matroids} There are many equivalent axioms of oriented matroids. In this paper we emphasize the one describing an oriented matroid as an involuted set with a closure operator as given in \cite{om} and \cite{largeconvex}. An oriented matroid is a triple $(E,*,cx)$  where $E$ is a set with an involution map $*: E\rightarrow E$ (i.e. $x^{**}=x, x\neq x^*$) and $cx$ a closure operator on $E$ such that
(1) if $x\in cx(X)$ there exists a finite set $Y\subseteq  X$ such that $x\in cx(Y)$,
(2) $cx(X)^*=cx(X^*)$,
(3) if $x\in cx(X\cup \{x^*\})$ then $x\in cx(X)$,
(4) if $x\in cx(X\cup \{y^*\})$ and $x\not\in cx(X)$ then $y\in cx(X\backslash\{y\}\cup\{x^*\})$.

For each oriented matroid $(E,*,cx)$, define $\overline{E}=E/\sim$ where the equivalence relation is that $x\sim y$ if and only if  $x^*=y$ or $x=y$. Let $\overline{cx}(X)=cx(X\cup X^*)/\sim$. Then $(\overline{E},\overline{cx})$ is an unoriented matroid. Then rank of the oriented matroid of $(E,*,cx)$ is defined to be the rank of the unoriented matroid $(\overline{E},\overline{cx})$.

Let $V$ be a vector space. Let $\Gamma$ be a nonempty set such that $\Gamma=-\Gamma$ and $0\not\in \Gamma$. Then $(\Gamma,-,\cone_{\Gamma})$ is an oriented matroid (where $\cone_{\Gamma}$ is defined in the previous section) with rank being $\dim(\mathbb{R}(\Gamma))$. Such an oriented matroid is said to be realizable. It is known that not every oriented matroid is realizable.

It is known (cf.  \cite[Exercise 3.13]{ombook}) that a finite rank  oriented matroid is determined up to isomorphism by the so-called chirotope map $\chi: E^r\rightarrow \{+,-,0\}$, up to a sign. If $(E,*,cx)$ is realizable of finite rank $r$ then the chirotope map is given by $\chi(e_1,e_2,\ldots,e_r)=\sgn(\det(e_1,e_2,\ldots,e_r)),$ i.e. the sign of the determinant of the $r\times r$ matrix formed by these $r$ vectors (regarded as column vectors with respect to a chosen basis of the ambient space).
The oriented matroid determines two chirotope maps,  depending on the orientation of the chosen basis and differing only by a sign.

\subsection{Coxeter systems} Let $W$ be a group and $S$ be a set of generators of $W$ such that every element of $S$ has order 2 in $W$. For $s,s'\in S,$ denote $m(ss')$ the order of $ss'$. We call $(W,S)$ a Coxeter system if $S$ and the relations $(ss')^{m(ss')}$ with $s,s'\in S, m(ss')\neq \infty$ form a presentation of $W$. $W$ is called a Coxeter group. The elements of $S$ are called simple reflections. Call $|S|$ the rank of the Coxeter system $(W,S)$. In this paper, we assume unless otherwise stated that   $|S|$ is finite, but such Coxeter systems may contain infinite rank reflection subgroups.

Given a Coxeter system $(W,S)$ we associate to it a Coxeter graph in the following way: the set of vertices is $S$ and $s,r\in S$ have an edge between them if $m(rs)\geq 3$ (including $m(rs)=\infty$). If $m(rs)\geq 4$ we label the edge with that number. If the Coxeter graph is connected, $(W,S)$ (and the Coxeter group $W$) is called irreducible and it is called reducible otherwise.
An element of $W$ is called a reflection if it is conjugate to some $s\in S.$ The set of reflections is denoted $T$. Let $w\in W.$ If $w=s_1s_2\cdots s_k, s_i\in S$ with $k$ minimal, we say the length of $w$, denoted $l(w)$, is $k$.

There are complete classifications by Coxeter graphs of finite irreducible Coxeter systems and of a family of infinite Coxeter groups,  called  irreducible  affine Weyl groups, arising from affine reflections in Euclidean spaces. For details, see  \cite{Hum}, \cite{Bourbaki}  and \cite{bjornerbrenti}.

Recall that an $R$-indexed  Coxeter matrix is an $R\times R$-indexed family $(m_{r,s})_{r,s\in R}$  such that $m_{r,r}=1$ for $r\in R$ and $m_{r,s}=m_{s,r}\in \mathbb{N}_{\geq 2}\cup\set{\infty}$ for $r\neq s$ in $R$.
Associated to  a Coxeter system $(W,S)$, there is an $S$-indexed Coxeter matrix $(m_{r,s})_{r,s\in S}$ determined by $m_{r,s}:=m(rs)\in \mathbb{N}_{\geq 1}\cup\set{\infty}$.  An isomorphism $(W,S)\to (W',S')$ of Coxeter systems is a group isomorphism $W\to W'$ which restricts to a bijection $S\to S'$. If $(W,S)$, $(W',S')$ are Coxeter systems, a map $f\colon S\to S'$ extends to an isomorphism $(W,S)\to (W',S')$ if and only if it is a bijection which preserves the Coxeter matrices, in the sense that $m(rs)=m(f(r)f(s))$ for all $r,s\in S$.

Let $T'\subseteq T$, the set of reflections. Then the subgroup $W'$ of $W$ generated by $T'$ is also a Coxeter group with the Coxeter system
$(W',S')$ where $S'=\chi(W'):=\{t\in T\cap W'|l(t't)>l(t), \forall t'\in T\cap W'\backslash \{t\}\})$. It can be shown that if $|T'|=2$, $W'$ is a dihedral group.
Such a group is called a dihedral reflection subgroup.  Partially order the set of all dihedral reflection subgroups of $W$ under inclusion. The maximal elements of this poset are called maximal dihedral reflection subgroups.

\subsection{Realized root systems}  Following  \cite{embed}, \cite{Fu} and \cite{rigidity}, we define   a realized root system  datum to be  data $D=(V,V^{\vee},(-,-),\Pi,\Pi^{\vee},\iota)$ such that:

(1) $V,V^{\vee}$ are $\mathbb{R}-$vector space with positively independent subsets $\Pi,\Pi^{\vee}$ respectively (where a subset $\Gamma$ of a real vector space is said to be positively independent if there don't exist some $n\in \mathbb{Z}^{+}$, pairwise distinct $\gamma_{1},\ldots, \gamma_{n}$ in $\Gamma$ and strictly positive scalars $c_{1},\ldots, c_{n}$ such that $\sum_{i=1}^{n}c_{i}\gamma_{i}=0$),

(2) $\iota: \Pi\rightarrow \Pi^{\vee}$ is a bijection; denote $\iota(\alpha)$ by $\alpha^{\vee}$,

(3) $(-,-): V\times V^{\vee}\rightarrow \mathbb{R}$ is a bilinear pairing such that $(\alpha,\alpha^{\vee})=2$ for all $\alpha\in \Pi,$

(4) $(\alpha,\beta^{\vee})\leq 0$, for all $\alpha\neq \beta$ in $\Pi$,

(5) $(\alpha,\beta^{\vee})=0$ if and only if $(\beta,\alpha^{\vee})=0$, for all $\alpha\neq \beta$ in $\Pi$,

(6) $(\alpha,\beta^{\vee})(\beta,\alpha^{\vee})\in \{4\cos^2\frac{\pi}{m}|m\in \mathbb{N},m\geq 2\}\cup [4,\infty)$, for all $\alpha\neq \beta$ in $\Pi$.

We recall the main properties (see op. cit.). For $\alpha\in \Pi$ define $s_{\alpha}:V\rightarrow V$ by $v\mapsto v-(v,\alpha^{\vee})\alpha$. Let $\widehat{W}$ be the subgroup of $GL_{\mathbb{R}}(V)$ generated by $\widehat{S}:=\{s_{\alpha}|\alpha\in \Pi\}$.

(7) The map $\alpha\mapsto s_{\alpha}\colon \Pi\to \widehat{S}$ is injective. Further,  $(\widehat{W},\widehat{S})$ is a Coxeter system
with Coxeter matrix $(m'_{r,s})_{r,s\in \widehat{S}}$ given by $m'_{s_{\alpha},s_{\alpha}}=2$ for $\alpha\in \Pi$, $m'_{s_{\alpha},s_{\beta}}=m$ if
$(\alpha,\beta^{\vee})(\beta,\alpha^{\vee})= 4\cos^2\frac{\pi}{m}$ where $m\in \mathbb{N}_{\geq 2}$, and $m'_{s_{\alpha},s_{\beta}}=\infty$ if $(\alpha,\beta^{\vee})(\beta,\alpha^{\vee})\geq 4$.

By a realized root system datum of a Coxeter system $(W,S)$,
we mean a realized root system datum $D$ in the above sense
together with a specified isomorphism of Coxeter systems
 $(W,S)\to (\widehat W, \widehat S)$, which we generally
 write (and regard)  as an identification even in situations where several
 realized root system datums of  $(W,S)$ are under
 simultaneous consideration.  Fix a realized root system datum of $(W,S)$.

$\Pi$ is called the set of simple roots. $\Phi:=\widehat{W}\Pi$ is called the set of roots. $\Phi^+:=\Phi\cap \cone(\Pi)$ is called the set of positive roots. $\Phi^-:=-\Phi^+$ is called the set of negative roots.
It can be shown that $\Phi=\Phi^+\dot\cup \Phi^-$ (disjoint union). Similarly, one defines $\Phi^{\vee}$, $(\Phi^{\vee})^{\pm}$, $\widehat {S^{\vee}}$, $\widehat W^{\vee}$ etc.
We call $\Phi^{\vee}$ the set of coroots.
There is a natural identifications of $W=\widehat W$ with  $\widehat{W}^{\vee}$. Also, $\iota$ extends to a $W$-equivariant bijection between $\Phi$ and $\Phi^{\vee}.$ When restricted to $\Phi^+,$ $\iota$ gives a bijection between $\Phi^+$ and $(\Phi^{\vee})^+$. Also denote $\iota(\alpha)$ by $\alpha^{\vee}$ for $\alpha\in \Phi.$ Then for $\alpha\in \Phi$ we can define $s_{\alpha}: V\rightarrow V$ whose action follows the same formula as when $\alpha\in \Pi.$ One can prove that $ws_{\alpha}w^{-1}=s_{w(\alpha)}$ for all $w\in {W}.$
 For $w\in W, \det(w)=(-1)^{l(w)}.$ We call $((\alpha,\beta^{\vee}))_{\alpha,\beta\in\Pi}$ a Non-integral Generalized Cartan Matrix (abbreviated NGCM).

 In general, a matrix $A=(c_{\alpha,\beta})_{\alpha,\beta\in \Pi}$ is a NGCM of a Coxeter system with Coxeter matrix
$(m_{\alpha,\beta})_{\alpha,\beta\in \Pi}$ if and only if the following conditions hold for all $\alpha,\beta\in \Pi$. First, $c_{\alpha,\alpha}=2$. Second,  if $\alpha\neq \beta$, then  $c_{\alpha,\beta}\leq 0$ with equality if and only if $m_{\alpha,\beta}=2$.
Third, if  $m_{\alpha,\beta}\neq \infty,1$, then  $c_{\alpha,\beta}c_{\beta,\alpha}=4\cos^{2}\frac{\pi}{m_{\alpha,\beta}}$. Finally, if
 $m_{\alpha,\beta}=\infty$, then  $c_{\alpha,\beta}c_{\beta,\alpha}\geq 4$.

For simplicity, we  refer to $\Phi$ above as a realized root system of $(W,S)$.
In general the realized  root system thus constructed is not reduced, i.e. there might be roots $\beta=c\alpha$ where $c\neq 1, c\in \mathbb{R}_{\geq 0}.$ However,
recall from \cite{bjornerbrenti} and \cite{Bourbaki}  that there is a
$W$-action on $T\times \{\pm 1\}$ given by
$w(t,\epsilon)=(wtw^{-1},\eta(w,t)\epsilon)$ where
$\eta(w,t)=-1$ if $l(tw^{-1})<l(w^{-1})$ and $\eta(w,t)=1$ otherwise. This $W$-action commutes with the surjection $\pi:\Phi\rightarrow T\times \{\pm 1\}: \epsilon\alpha\mapsto (s_{\alpha},\epsilon), \alpha\in \Phi^+, \epsilon\in \{\pm 1\}$. We may use $\pi$ to transfer the oriented matroid $M=(\Phi,-,\cone_{\Phi})$ to a (reduced) oriented matroid
$M':=(T\times \set{\pm 1},-, c=c_{\Phi})$ where for $A\subseteq T\times \set{\pm 1}$,  $c(A):=\pi(\cone_{\Phi}(\pi^{-1}A)))$.

We shall assume for convenience unless otherwise stated
that $\Pi$ spans $V$ and $\Pi^{\vee}$ spans $V^{\vee}$, and that $S$ is finite.  The last assumption makes it possible to describe $M'$ by its chirotope map.

If in the above construction of realized root system for $(W,S)$, we have  $(\alpha,\beta^{\vee})=-2\cos\frac{\pi}{m(s_{\alpha}s_{\beta})}$ for all $\alpha,\beta\in \Pi$ (so $(\alpha,\beta^{\vee})=-2$ if $m(s_{\alpha}s_{\beta})=\infty$) and $\Pi$, $\Pi^{\vee}$ are linearly independent, the realized root system is said to be a standard one.

We remark that the above notion of a realized
 root system is essentially the most general one in
 which one has the standard convex geometric properties of the root system, such as  the decomposition of the root
 system into positive and negative roots spanning pointed
  cones
 which intersect only at the origin. Most classes of
 reflection representation and realized root system
 considered in the literature  can be interpreted in this
 framework.

\subsection{Rescaling} Let $D=(V,V^{\vee},(-,-),\Pi,\Pi^{\vee},\iota)$ be a realized root system datum associated to a Coxeter system $(W,S)$. Define  $D':=(V,V^{\vee},(-,-),\Delta,\Delta^{\vee},\iota')$ where $\Delta=\{d_{\alpha}\alpha|\alpha\in \Pi\}$, $\Delta^{\vee}=\{d_{\alpha}^{-1}\alpha|\alpha\in \Pi\}$, $d_{\alpha}\in \mathbb{R}_{>0}$ for $\alpha\in \Pi$ and $\iota'(d_{\alpha}\alpha)=d_{\alpha}^{-1}\alpha^{\vee}.$

\begin{lemma}\label{rescale}
$D'$ above is also a realized root system datum of the Coxeter system $(W,S)$. Further,  $D$ and $D'$ determine the same oriented matroid structure on $T$.
\end{lemma}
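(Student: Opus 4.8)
The statement has two parts, and the plan is to dispatch the first by direct verification and to concentrate effort on the second.

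For the claim that $D'$ is a realized root system datum of $(W,S)$, I would simply run through axioms (1)--(6). Positive independence of $\Pi,\Pi^{\vee}$ passes to $\Delta,\Delta^{\vee}$, since scaling each vector by a positive constant cannot create a nontrivial positive linear dependence. Axioms (3), (4), (5) are immediate from bilinearity and the positivity of the $d_{\alpha}$, and the one genuinely load-bearing computation is that for $\alpha\neq\beta$ the product $(d_{\alpha}\alpha, d_{\beta}^{-1}\beta^{\vee})(d_{\beta}\beta, d_{\alpha}^{-1}\alpha^{\vee})$ equals $(\alpha,\beta^{\vee})(\beta,\alpha^{\vee})$, because the factors $d_{\alpha}d_{\beta}^{-1}$ and $d_{\beta}d_{\alpha}^{-1}$ cancel. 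This gives axiom (6) and, via property (7), shows $D'$ realizes the \emph{same} Coxeter matrix. In fact the reflection attached to $d_{\alpha}\alpha$ acts by $v\mapsto v-(v,d_{\alpha}^{-1}\alpha^{\vee})(d_{\alpha}\alpha)=v-(v,\alpha^{\vee})\alpha=s_{\alpha}(v)$, so $\widehat S'=\widehat S$ and $\widehat W'=\widehat W$ as subgroups of $GL_{\mathbb{R}}(V)$, and the identification of $(W,S)$ with the Coxeter system of $D'$ is literally the identity. In particular $D$ and $D'$ yield the same reflection set $T$, the same lengths, and hence the same $W$-set $T\times\set{\pm 1}$.

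For the equality of oriented matroid structures, the tempting approach is to build a bijection $\Phi\to\Phi'$ scaling each root into its rescaled partner, but this \emph{fails}: because the $d_{\alpha}$ may differ along a single $W$-orbit of simple roots, rescaling can turn a reduced root system into a non-reduced one (already for type $A_2$, where $\Phi'=d_{\alpha_1}\Phi\cup d_{\alpha_2}\Phi$), so $\Phi$ and $\Phi'$ need not even have the same cardinality. The right move, and the heart of the argument, is to show that the transferred closure $c_{\Phi}$ depends on $\Phi$ only through its arrangement of \emph{rays}, not through which roots sit on each ray. This is where I expect the only real subtlety to lie.

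Concretely, for a root $\gamma$ the map $s_{\gamma}$ negates exactly the line $\mathbb{R}\gamma$ and fixes the hyperplane $(-,\gamma^{\vee})=0$, so $s_{\gamma}=t$ forces $\gamma$ onto the $(-1)$-eigenline of $t$; hence $\pi^{-1}(t,\epsilon)$ is a nonempty subset of a single open ray $\mathbb{R}_{>0}\gamma_{t,\epsilon}$, where $\gamma_{t,\epsilon}$ is any chosen representative root. Since a cone is unchanged under positive rescaling of its generators and under adjoining further positive multiples of them, I would first prove $\cone_{\Phi}(\pi^{-1}A)=\cone(\set{\gamma_{t',\epsilon'}\mid (t',\epsilon')\in A})\cap\Phi$, and then, using that a cone is closed under positive scaling together with $\pi^{-1}(t,\epsilon)\neq\emptyset$, conclude the ray formula $c_{\Phi}(A)=\set{(t,\epsilon)\mid \gamma_{t,\epsilon}\in\cone(\set{\gamma_{t',\epsilon'}\mid (t',\epsilon')\in A})}$, which visibly sees only the directions $\gamma_{t,\epsilon}$ up to positive scaling. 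Finally I would compare the two ray systems: by the first part $D$ and $D'$ share $V$, $T$, and the reflections, and $\cone(\Delta)=\cone(\Pi)$, so a positive root of $\Phi'$ fixing $t$ lies on the same $(-1)$-eigenline of $t$ inside the same positive cone as the corresponding positive root of $\Phi$; thus each $\gamma'_{t,\epsilon}$ is a positive multiple of $\gamma_{t,\epsilon}$. Feeding this into the ray formula, the generating cones $\cone(\set{\gamma'_{t',\epsilon'}})$ and $\cone(\set{\gamma_{t',\epsilon'}})$ coincide and membership of $\gamma'_{t,\epsilon}$ matches that of $\gamma_{t,\epsilon}$, giving $c_{\Phi'}(A)=c_{\Phi}(A)$ for all $A$. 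Hence $D$ and $D'$ induce the same reduced oriented matroid $M'$ on $T\times\set{\pm 1}$, as required.
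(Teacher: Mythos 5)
Your proof is correct and follows essentially the same route as the paper's: verify the axioms, note that $s_{d_{\alpha}\alpha}=s_{\alpha}$ and that the positive roots of $D$ and $D'$ span the same set of rays, and conclude that the transferred structure on $T\times\set{\pm 1}$ depends only on those rays. The only cosmetic difference is that you establish the ray-dependence directly from the closure operator $c_{\Phi}$, whereas the paper invokes the chirotope characterization; your explicit ray formula (and the warning that a naive root-by-root bijection fails because rescaling can destroy reducedness) usefully fills in details the paper leaves as ``straightforward.''
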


\begin{proof}
It is straightforward to check that $D'$ is also a realized
root system datum. In particular one notes that
$s_{\alpha}=s_{d_{\alpha}\alpha}$ and $w(d_{\alpha}
\alpha)=d_{\alpha}w(\alpha)$.
The  associated Coxeter system
$(\widehat W,\widehat S)$ of $D'$ is  equal to  that of
$D$,   and  is therefore canonically identified with $(W,S)$.  The above also shows the set of rays spanned by the positive roots for $D$ and $D'$ are exactly the same.  Hence the resulting  reduced oriented matroid on $T\times \set{\pm 1}$  is clearly the same for $D$ and $D'$,  by the chirotope characterization of oriented matroids.
\end{proof}

From now on, we shall say that  $D'$ in the above lemma is a realized root system datum obtained by rescaling $D$. We observe that if  $(W,S)$ is of rank at most two, then any  realized root system datum has linearly independent $\Pi$ and $\Pi^{\vee}$, and can be rescaled to one with a symmetric NGCM. It is well known in this case from explicit formulae for the roots (see for example \cite{DyerReflSubgrp} or \cite{Fu}) that the oriented matroid structure on $T\times\set{\pm 1}$ is independent of choice of realized root system datum.

\section{Coxeter groups with a forest as Coxeter graph}\label{s3}

In this section we show that if a Coxeter group has the Coxeter graph of a forest without infinite bonds, then all realized root systems with linearly independent simple roots induce the same oriented matroid structure on $T\times \{1,-1\}$.

\begin{definition}
A root system $(V,V^{\vee},(-,-),\Pi,\Pi^{\vee},\iota)$ is said to have a symmetric NGCM if $(\alpha,\beta^{\vee})=(\beta,\alpha^{\vee})$ for all $\alpha,\beta\in \Pi.$
\end{definition}

\begin{proposition}\label{tree}
 Let $(V,V^{\vee},(-,-),\Pi,\Pi^{\vee},\iota)$  be   a realized root system datum of the Coxeter system $(W,S)$ whose Coxeter graph is a forest. Then
the root system $\Phi$ can be rescaled to a realized root system having a symmetric NGCM.
\end{proposition}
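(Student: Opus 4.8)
The plan is to compute directly how the NGCM transforms under a rescaling and then to recognize the symmetry requirement as a system of multiplicative constraints, one per edge of the Coxeter graph, which the forest structure lets us solve freely. Write $c_{\alpha,\beta}=(\alpha,\beta^{\vee})$ for the entries of the NGCM of $D$, and let $\Delta=\set{d_\alpha\alpha\mid\alpha\in\Pi}$, $\Delta^{\vee}=\set{d_\alpha^{-1}\alpha^{\vee}\mid\alpha\in\Pi}$ with $d_\alpha\in\mathbb{R}_{>0}$ be a rescaling as in Lemma~\ref{rescale}. A direct bilinearity computation gives the rescaled NGCM entries
$$\tilde c_{\alpha,\beta}=(d_\alpha\alpha,\,d_\beta^{-1}\beta^{\vee})=\frac{d_\alpha}{d_\beta}\,c_{\alpha,\beta},$$
so the rescaled datum has a symmetric NGCM precisely when $\frac{d_\alpha}{d_\beta}c_{\alpha,\beta}=\frac{d_\beta}{d_\alpha}c_{\beta,\alpha}$ for all $\alpha\neq\beta$ in $\Pi$, i.e.\ when $(d_\alpha/d_\beta)^2=c_{\beta,\alpha}/c_{\alpha,\beta}$.

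Next I would separate cases according to the Coxeter graph. If $s_\alpha,s_\beta$ are non-adjacent, then $m(s_\alpha s_\beta)=2$, so by the NGCM conditions both $c_{\alpha,\beta}$ and $c_{\beta,\alpha}$ vanish and the symmetry condition holds automatically, for any choice of the $d$'s. If $\set{s_\alpha,s_\beta}$ is an edge, then $m(s_\alpha s_\beta)\geq 3$, so $c_{\alpha,\beta},c_{\beta,\alpha}<0$ are both strictly negative (in particular nonzero, also in the infinite-bond case, where $c_{\alpha,\beta}c_{\beta,\alpha}\geq 4$), whence the ratio $c_{\beta,\alpha}/c_{\alpha,\beta}$ is positive and admits a positive square root. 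Thus the only genuine constraints are
$$\frac{d_\alpha}{d_\beta}=\sqrt{\frac{c_{\beta,\alpha}}{c_{\alpha,\beta}}}\qquad\text{for each edge }\set{s_\alpha,s_\beta}.$$
Taking logarithms and setting $x_\alpha=\log d_\alpha$ converts this into the additive difference system $x_\alpha-x_\beta=\tfrac12\log(c_{\beta,\alpha}/c_{\alpha,\beta})$ along the edges, with a right-hand side antisymmetric in $\alpha,\beta$.

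Because the Coxeter graph is a forest it has no cycles, so this difference system carries no consistency obstruction and can be solved by propagation. Concretely, in each connected component choose a root vertex and set its $x_\alpha$ (equivalently $d_\alpha$) to $0$ (resp.\ to $1$); every other vertex $\gamma$ is joined to the root by a \emph{unique} path, along which the prescribed differences determine $x_\gamma$ unambiguously, and since in a tree each edge occurs as a (vertex, parent) pair, all the edge constraints are met simultaneously. Exponentiating recovers positive scalars $d_\alpha$ satisfying the displayed constraints, and by Lemma~\ref{rescale} the resulting datum $D'$ is again a realized root system datum of $(W,S)$ with, by construction, a symmetric NGCM.

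The crux of the argument is precisely the acyclicity of the forest. On a graph containing a cycle $s_{\alpha_0},\dots,s_{\alpha_k}=s_{\alpha_0}$ the differences would be forced to telescope to $0$ around the cycle, i.e.\ $\prod_{i=1}^{k}(c_{\alpha_i,\alpha_{i-1}}/c_{\alpha_{i-1},\alpha_i})=1$, a nontrivial condition on the NGCM that generally fails; this is exactly why the statement is restricted to forests and foreshadows the distinctive behaviour of the cyclic types such as $\widetilde A_n$ treated later. I expect the setup and the edge/non-edge dichotomy to be routine, with the only real content being this observation that a forest has no monodromy, so no main obstacle is anticipated beyond stating the propagation cleanly.
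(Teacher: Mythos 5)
Your proof is correct and follows essentially the same route as the paper's: both reduce the symmetry requirement to one multiplicative constraint per edge of the Coxeter graph and solve it by propagating scalars outward from a chosen root of each tree, using the strict negativity of $c_{\alpha,\beta}$ on edges to take the positive square root. The paper organizes the propagation as an induction on the distance-$k$ balls $\Pi_k$ rather than via the logarithmic difference-system language, but the content is identical.
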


\begin{proof}
It suffices to check the assertion for $(W,S)$ having the Coxeter graph of a tree.
Note that Coxeter graph can be considered as a graph with vertices set being $\Pi.$ Denote $d(\alpha,\beta)$ the distance between $\alpha$ and $\beta$ in the Coxeter graph for $\alpha, \beta\in \Pi.$
We pick and designate an element $\alpha_0\in \Pi$ as the root of the tree. To prove the assertion, we use induction. Let $\Pi_k=\{\alpha\in \Pi|d(\alpha_0,\alpha)\leq k\}\subseteq  \Pi$. Assume that we can rescale $(V,V^{\vee},(-,-),\Pi,\Pi^{\vee},\iota)$ to $(V,V^{\vee},(-,-),\Delta,\Delta^{\vee},\iota')$ where $\Delta=\Delta_k\cup (\Pi\backslash \Pi_k)$, $\Delta^{\vee}=\Delta_k^{\vee}\cup (\Pi^{\vee}\backslash \Pi_k^{\vee})$, $\Delta_k=\{d_{\alpha}\alpha|\alpha\in \Pi_k\}$, $\Delta_k^{\vee}=\{d_{\alpha}^{-1}\alpha^{\vee}|\alpha\in \Pi_k\}$ and $(d_{\alpha}\alpha,d_{\beta}^{-1}\beta^{\vee})=(d_{\beta}\beta,d_{\alpha}^{-1}\alpha^{\vee})$ for all $\alpha,\beta\in \Pi_k.$
Take any $\alpha$ such that $d(\alpha,\alpha_0)=k+1.$ Then one has some unique $\alpha'\in \Pi$ such that $d(\alpha',\alpha_0)=k$ and $d(\alpha',\alpha)=1.$
Let $d_{\alpha'}\alpha'\in \Delta_k$. Denote $d_{\alpha}=d_{\alpha'}\sqrt{\frac{(\alpha',\alpha^{\vee})}{(\alpha,(\alpha')^{\vee})}}$. Then let $\Xi=\Delta_k\cup \{d_{\alpha}\alpha|d(\alpha,\alpha_0)=k+1\}\cup (\Pi\backslash \Pi_{k+1})$ and $\Xi^{\vee}=\Delta_k^{\vee}\cup \{d_{\alpha}^{-1}\alpha^{\vee}|d(\alpha,\alpha_0)=k+1\}\cup (\Pi^{\vee}\backslash \Pi_{k+1}^{\vee})$. Let $\iota''=\iota'$ when acting on $\Xi\backslash \{d_{\alpha}\alpha|d(\alpha,\alpha_0)=k+1\}$ and $\iota''(d_{\alpha}\alpha)=d_{\alpha}^{-1}\alpha^{\vee}$ otherwise. Then $(V,V^{\vee},(-,-),\Xi,\Xi^{\vee},\iota'')$ is a rescaled realized root system datum having the property that $(d_{\alpha}\alpha,d_{\beta}^{-1}\beta^{\vee})=(d_{\beta}\beta,d_{\alpha}^{-1}\alpha^{\vee})$ for all $\alpha,\beta\in \Pi_{k+1}.$ For
sufficiently large $k$, this gives the desired conclusion since  $S$ is assumed finite.
\end{proof}

\begin{corollary}
If the Coxeter graph of $(W,S)$ is a forest and has no infinite bonds (i.e. the order of $ss'$ is finite for any $s,s'\in S$), then all realized root systems of $(W,S)$ with linearly independent simple roots give the same oriented matroid structure on $T\times\set{\pm 1}.$
\end{corollary}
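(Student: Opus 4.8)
The plan is to combine the two preceding results to collapse every admissible datum onto a single canonical one. First I would record that, since $\Pi$ spans $V$ by the standing convention and is assumed linearly independent, $\Pi$ is in fact a basis of $V$, so $\dim V = |\Pi| = |S|$. Given any realized root system datum $D$ of $(W,S)$ with linearly independent simple roots, Proposition \ref{tree} rescales $D$ to a datum $D'$ with a symmetric NGCM, and Lemma \ref{rescale} guarantees that $D$ and $D'$ induce the same oriented matroid structure on $T\times\{\pm 1\}$. Rescaling multiplies each simple root by a positive scalar, hence preserves linear independence, so $D'$ again has a basis of simple roots. Thus it suffices to treat data with symmetric NGCM.

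Next I would pin down the NGCM of $D'$ completely. Because there are no infinite bonds, every pair $\alpha\neq\beta$ in $\Pi$ satisfies $(\alpha,\beta^{\vee})(\beta,\alpha^{\vee})=4\cos^2\frac{\pi}{m_{\alpha,\beta}}$ with $m_{\alpha,\beta}<\infty$. Symmetry gives $(\alpha,\beta^{\vee})=(\beta,\alpha^{\vee})$, so $(\alpha,\beta^{\vee})^2=4\cos^2\frac{\pi}{m_{\alpha,\beta}}$; since $(\alpha,\beta^{\vee})\leq 0$ and $\cos\frac{\pi}{m}\geq 0$ for $m\geq 2$, we are forced to take $(\alpha,\beta^{\vee})=-2\cos\frac{\pi}{m_{\alpha,\beta}}$. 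Together with $(\alpha,\alpha^{\vee})=2$, this shows that the NGCM of $D'$ is precisely the standard one, depending only on the Coxeter matrix of $(W,S)$ and not on the choice of $D$.

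Finally I would argue that a standard NGCM together with a basis of simple roots determines the entire root system as a labelled vector configuration. Using $\Pi$ as the basis of $V$, the matrix of each $s_{\alpha}$ is read off directly from the NGCM via $s_{\alpha}(\beta)=\beta-(\beta,\alpha^{\vee})\alpha$, so the generators of $W$ act by fixed matrices independent of $D'$; consequently every root $w\alpha\in\Phi=W\Pi$ has a coordinate vector independent of $D'$, positivity is detected by $\Phi^+=\Phi\cap\cone(\Pi)$, and the surjection $\pi\colon\Phi\to T\times\{\pm 1\}$ identifies elements $(t,\epsilon)$ with these coordinate vectors in a way that does not depend on $D'$. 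Equivalently, for two such data there is a unique $W$-equivariant linear isomorphism matching the simple-root bases, carrying $\Phi_1$ onto $\Phi_2$ and positive roots onto positive roots and commuting with $\pi$; since it preserves cones it transports $\cone_{\Phi_1}$ to $\cone_{\Phi_2}$ and hence identifies the induced closure operators $c_{\Phi_1}=c_{\Phi_2}$ on $T\times\{\pm 1\}$. I expect the only real subtlety to lie in this last step when the root systems are non-reduced: one must check that collapsing proportional roots under $\pi$ is harmless, which holds because a positive rescaling of a root leaves the sign of every relevant determinant unchanged, so the chirotope descends unambiguously to $T\times\{\pm 1\}$ (up to the single global sign inherent in the chirotope characterization). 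This yields the asserted independence from $\Phi$.
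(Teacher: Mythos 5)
Your proposal is correct and follows essentially the same route as the paper: rescale to a symmetric NGCM via Proposition \ref{tree}, invoke Lemma \ref{rescale} to see the oriented matroid structure is unchanged, and observe that symmetry together with the absence of infinite bonds forces the NGCM to be the standard one, which (with $\Pi$ a basis) pins down the root system and hence the transferred structure on $T\times\set{\pm 1}$. The paper compresses your third step into ``the assertion follows,'' so your version merely spells out details the authors leave implicit.
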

\begin{proof}
By Proposition \ref{tree}, $(V,V^{\vee},(-,-),\Pi,\Pi^{\vee},\iota)$ can be rescaled to a realized root system datum having symmetric Generalized Cartan matrix and the rescaling does not affect the oriented matroid structure by Lemma \ref{rescale}. Combining this with the fact the Coxeter graph has no infinite bonds, one can assume that the realized root system is the standard one.  The assertion follows.
\end{proof}

\subsection{Linearly dependent simple roots}\label{ld} We now discuss the  relationship between realized root systems with  positively independent simple roots and those with linearly independent simple roots (compare \cite{DyerThesis}, \cite[\S6.1]{Krammer}).  Suppose given  a realized root system datum $D:=(V,V^{\vee},(-,-),\Pi,\Pi^{\vee},\iota)$ in which $\Pi$ is linearly independent.
Let  $U\subseteq V$ be a subspace of the left radical of the bilinear form $(-,-)$ such that $U\cap \cone(\Pi)=0$. Let $V':=V/U$ and $\pi\colon V\to V'$ be the canonical surjection.
The restriction of $\pi$ to $\Pi$ is injective and  $\Pi'=\pi(\Pi)$,  is positively independent, by the assumptions on $U$. Define a bijection $\iota'\colon \Pi'\to  \Pi^{\vee}$ by $\iota'(\pi(\alpha))=\iota(\alpha)$. Then $D':=(V',V^{\vee},(-,-),\Pi',\Pi^{\vee},\iota')$  is a realized root system datum with the same NGCM as  $D$. The Coxeter system attached to $D'$ is therefore canonically isomorphic to
$W$. It follows readily that $\pi$ is $W$-equivariant. This implies the root system for $D'$ is $\Phi'=\pi(\Phi)$.  It is easily
seen that any realized root system is isomorphic to one
obtained in this way from a realized root system with linearly
independent simple roots (compare \cite[3.5]{DyerThesis}). Moreover, rescaling $D$ simply has the effect of rescaling $D'$.
We remark that  the the oriented matroid structures on $T\times\set{\pm 1}$ arising by transfer of those on $\Phi$ and $\Phi'$ have ranks $\vert S\vert$ and $\vert S\vert -\dim(U)$, so are not the same  if $U\neq 0$. Analogous facts to those above also apply to the ``dual''root system $\Phi^{\vee}$ in $V^{\vee}$, so for study of  $\Phi$ and $\Phi'$ above, there would be no loss of generality  in assuming from the outset that $\Pi^{\vee}$ is linearly independent.

If under the assumptions of the previous corollary, the standard realized root system datum $D$ is such that there is no non-trivial subspace $U$ of $V$ satisfying the above assumptions, then $D'$ above must be isomorphic to $D$, and it follows that the assumption in the hypotheses of the Corollary that the simple roots be linearly independent can be omitted.
It is well known that for realized root systems of  finite Coxeter groups, the (left) radical of the  form for the standard realized root system datum  $D$ is zero, and for irreducible  affine Weyl groups, the left radical  is one-dimensional, spanned by a vector lying in $\cone(\Pi)$. Thus, no non-trivial subspace $U$ as above exists in either case.
Therefore we have
\begin{corollary}\label{nonA}
For a  finite  Coxeter group  or  an  irreducible  affine Weyl group which is not of type $\widetilde{A}_n$, all realized root systems give  the same oriented matroid structure on $T\times\set{\pm 1}$.
\end{corollary}
For any Coxeter system with two or more components which are affine Weyl groups, the left  radical of the bilinear form for  the standard realized root system datum  $D$    contains a non-trivial subspace which meets the cone spanned by the positive roots only at zero, and so the oriented matroid structures from realized root systems of such a Coxeter system is not unique.

\section{Realized Root Systems of Type $\widetilde{A}_n$}

\subsection{Realized root systems of type $\widetilde{A}_{n}$}
In this section we deal with the case that the Coxeter  system $(W,S)$ is of type $\widetilde{A}_n, (n\geq 2)$. We give explicitly a  formula for values of  the chirotope map and show that for $n\geq 3$ there are three different oriented matroid structures from  realized root systems.
We take  $S=\{s_0,s_1,s_2,\ldots,s_n\}$ and  $$W=\mpair{S|s_0^2=s_1^2=\ldots=s_n^2=(s_0s_1)^3=(s_1s_2)^3=\ldots=(s_ns_0)^3=e}.$$ Then $W$ admits a realized root system as in the previous section.
We assume that $\Pi=\{\alpha_0,\alpha_1,\ldots,\alpha_n\}$ spans $V$ and must have
$$(\alpha_i,\alpha_{i+1}^{\vee})=a_i, (\alpha_{i+1},\alpha_i^{\vee})=\frac{1}{a_i}, 0\leq i\leq n-1$$
$$(\alpha_{n},\alpha_{0}^{\vee})=a_n, (\alpha_0,\alpha_{n}^{\vee})=\frac{1}{a_n}$$
$$(\alpha_i,\alpha_i^{\vee})=2,$$
$$(\alpha_i,\alpha_j^{\vee})=0\,\,\text{otherwise}$$ for certain negative real numbers $a_{0},\ldots, a_{n}$.
By Lemma \ref{rescale} if we rescale the roots by various positive numbers, it does not change the oriented matroid structure of $\Phi.$
Take $v=\sqrt{|a_0a_1a_2\cdots a_{n}|}$. We replace $\alpha_i, 2\leq i\leq n$ with $|a_1a_2\cdots a_{i-1}|\alpha_i$,
$\alpha_i^{\vee}, 2\leq i\leq n$ with $\frac{1}{|a_1a_2\cdots a_{i-1}|}\alpha_i^{\vee}$, $\alpha_0$ with $\frac{|a_1a_2\cdots a_n|}{v}\alpha_0$ and $\alpha_0^{\vee}$ with $\frac{v}{|a_1a_2\cdots a_n|}\alpha_0^{\vee}$.

Thus without loss of generality, we assume that
$$(\alpha_0,\alpha_{1}^{\vee})=-v, (\alpha_1,\alpha_0^{\vee})=-\frac{1}{v}$$
$$(\alpha_i,\alpha_{i+1}^{\vee})=-1, (\alpha_{i+1},\alpha_1^{\vee})=-1, 1\leq i\leq n-1$$
$$(\alpha_{n},\alpha_{0}^{\vee})=-v, (\alpha_0,\alpha_{n}^{\vee})=-\frac{1}{v}$$
$$(\alpha_i,\alpha_i^{\vee})=2,$$
and $(\alpha_i,\alpha_j^{\vee})=0$ otherwise.

One can check that the NGCM $((\alpha_{i},\alpha_{j}^{\vee}))_{i,j=0,\ldots, n}$ has determinant $-(v-v^{-1})^{2}$.
It is well known  that for $v=1$, the left radical of the form $(-,-)$ is one-dimensional, spanned by $\alpha_{0}+\ldots+\alpha_{n}\in \cone(\Pi)$, and the determinant computation shows the (left or right) radical is zero if $v\neq 1$. It follows from the discussion at the end  of Section \ref{s3}
that any realized root system of type $\widetilde A_{n}$ has linearly independent simple roots  and simple coroots, and is isomorphic to one of those considered above for some (in fact, uniquely determined) positive real number  $v$.

We remark that  distinct positive real scalars $v$ afford  non-isomorphic representations of $W$ on the linear span of $\Pi$,  and these may all be viewed as specializations of a generic reflection representation of $W$ on a free $\mathbb{R}[v,v^{-1}]$-module  (or even $\mathbb{Z}[v,v^{-1}]$-module) where $v$ is an indeterminate, with basis $\alpha_{0},\ldots, \alpha_{n}$
(see \cite[Chapter 2]{DyerThesis}). The formulae we give for $v\in \mathbb{R}$ apply equally well in this generic setting, with $v$ regarded instead as an indeterminate.

For $1\leq i\leq j\leq n$ we denote  $$\alpha_{i,j}'=\alpha_0+v(\alpha_1+\alpha_2+\ldots+\alpha_{i-1})+v^{-1}(\alpha_{j+1}+\alpha_{j+2}+\ldots+\alpha_n)$$
$$=s_{\alpha_{j+1}}s_{\alpha_{j+2}}\ldots s_{\alpha_{n}}s_{\alpha_{i-1}}s_{\alpha_{i-2}}\ldots s_{\alpha_1}(\alpha_0)$$
and denote
$$\alpha_{i,j}=\alpha_i+\alpha_{i+1}+\ldots+\alpha_j=s_{\alpha_j}\ldots s_{\alpha_{i+2}}s_{\alpha_{i+1}}(\alpha_i).$$
For all $n\in \mathbb{Z}$, define
  $c_n:=\frac{v^n-v^{-n}}{v-v^{-1}}$ if $v\neq 1 $ and $c_{n}:=n $ if $v=1$. We have $c_{0}=0$, $c_{1}=1$, $c_{-n}=-c_{n}$ and if  $n> 0$,  then $c_n=v^{n-1}+v^{n-3}+\ldots+v^{-n+1}$. (If $v$ is regarded as indeterminate,  then $c_{n}$ is known as  a Gaussian (or quantum) integer).

The following identity is well known and easily verified.
\begin{lemma}\label{identitygauss}
$c_{n+1}c_{m+1}-c_nc_m=c_{n+m+1}$.
\end{lemma}

Next we describe the root system explicitly.

\begin{proposition}\label{rootform}
$\Phi=\{\pm v^m(c_{k+1}\alpha_{i,j}+c_k\alpha_{i,j}')|m,k\in \mathbb{Z},1\leq i\leq j\leq n\}$.
\end{proposition}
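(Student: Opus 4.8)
The plan is to set $R := \{\pm v^m(c_{k+1}\alpha_{i,j}+c_k\alpha_{i,j}') \mid m,k\in\mathbb{Z},\ 1\le i\le j\le n\}$ and prove $R=\Phi$ by the two inclusions $\Phi\subseteq R$ and $R\subseteq\Phi$, both resting on a single explicit computation: the action of each simple reflection $s_{\alpha_p}$ on a general element of $R$. I would first record the elementary Gaussian-integer recurrences $c_{k+1}-vc_k=v^{-k}$ and $c_{k+1}-v^{-1}c_k=v^{k}$, together with $c_{-k}=-c_k$ and Lemma \ref{identitygauss}; these are exactly the identities that will collapse the output of a reflection back into the normal form $v^{m'}(c_{k'+1}\alpha_{i',j'}+c_{k'}\alpha_{i',j'}')$.

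For $\Phi\subseteq R$ it suffices, since $\Phi=\widehat W\Pi$, to check that $\Pi\subseteq R$ and that $R$ is stable under every $s_{\alpha_p}$. The first is immediate: $\alpha_p=\alpha_{p,p}$ is the element with $(m,k)=(0,0)$ for $1\le p\le n$, while $\alpha_0=\alpha_{1,n}'$ is the element with $(m,k)=(0,-1)$ and the minus sign, the tails of $\alpha_{1,n}'$ being empty. For stability I would write a general $\rho=c_{k+1}\alpha_{i,j}+c_k\alpha_{i,j}'$ in the basis $\alpha_0,\dots,\alpha_n$, where its coefficient is $c_k$ on $\alpha_0$, $c_kv$ on the left tail $\alpha_1,\dots,\alpha_{i-1}$, $c_{k+1}$ on $\alpha_i,\dots,\alpha_j$, and $c_kv^{-1}$ on the right tail $\alpha_{j+1},\dots,\alpha_n$, and then compute $(\rho,\alpha_p^\vee)$ from the NGCM. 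One finds that $(\rho,\alpha_p^\vee)$ vanishes for all but a few boundary values of $p$ (among $0,1,i-1,i,j,j+1,n$), so that $s_{\alpha_p}$ fixes $\rho$ except in those cases and only the boundary reflections need genuine analysis. Treating these one at a time and applying the recurrences above, the image is again in normal form with $(i,j)$ moved to an adjacent interval and $(m,k)$ shifted in a controlled way; I have verified, for instance, that $s_{\alpha_j}$ (for $i<j<n$) returns $(m,k,i,j-1)$ via $c_{k+1}-c_kv^{-1}=v^k$, that $s_{\alpha_1}$ (for $i=2$) returns $(m,k,1,j)$ via $c_kv+v^{-k}=c_{k+1}$, that $s_{\alpha_0}$ on the full interval $[1,n]$ sends $k\mapsto -k-2$ with a sign change, and that $s_{\alpha_0}$ on an interval with $i=1$, $j<n$ wraps around the cycle and decreases $m$. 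This gives $s_{\alpha_p}(R)\subseteq R$ for all $p$, hence $\Phi\subseteq R$.

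For the reverse inclusion $R\subseteq\Phi$ I would reuse the transition rules, now read as connecting an arbitrary element of $R$ to a known root. Each rule realizes one element of $R$ as the image of another under a single simple reflection, so the elements of $R$ form the vertices of a graph whose edges are these reflections; the displayed identity $\alpha_{i,j}=s_{\alpha_j}\cdots s_{\alpha_{i+1}}(\alpha_i)$ shows that the vertices $\alpha_{i,j}$ (the elements with $(m,k)=(0,0)$) are genuine roots. Since every simple reflection preserves $\Phi$ and at least one vertex in each connected component is a root, it remains only to verify that this graph is connected, i.e.\ that the boundary moves travel between all normal forms: the full-interval move at $\alpha_0$ makes the $k$-direction accessible, the wrap-around move at $\alpha_0$ makes the $m$-direction accessible, and the endpoint moves adjust the interval $[i,j]$. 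Connectivity then yields $R\subseteq\widehat W\Pi=\Phi$.

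The main obstacle is the bookkeeping in the stability computation: there are a moderate number of boundary configurations, depending on whether $i=1$, $j=n$ or $i=j$ and on which of $p=0,1,n$ is involved, and in each the correct Gaussian-integer identity must be selected so that the reflected vector collapses back to normal form and one can read off the new quadruple $(m',k',i',j')$ and sign. A secondary point requiring care is the connectivity claim used for $R\subseteq\Phi$, which must be argued by exhibiting, from the tabulated moves, explicit paths climbing the $k$- and $m$-ladders and resizing the interval. Once the boundary cases are tabulated both inclusions follow, proving the proposition.
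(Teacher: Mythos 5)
Your direction $\Phi\subseteq R$ (simple roots lie in $R$, plus stability of $R$ under each $s_{\alpha_p}$ via the tabulated boundary cases) is exactly the paper's argument for that inclusion, and your sample computations there are correct. The gap is in the reverse inclusion: the transition graph you propose to use is \emph{not} connected, so the step ``it remains only to verify that this graph is connected'' cannot be carried out. Writing a nonzero element of $R$ in the normal form $\eta v^{m}(c_{k+1}\alpha_{i,j}+c_{k}\alpha'_{i,j})$ (which is unique for generic $v$), every one of your moves preserves $m+k \bmod 2$: the interval moves leave $(m,k)$ fixed; the point move sends $k\mapsto -k$ and the full-interval move sends $k\mapsto -k-2$, both changing $k$ by an even amount; and each wrap-around move sends $(m,k)\mapsto(m\pm 1,-k-1)$, changing $m+k$ by $-2k$ or $-2k-2$. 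Hence $m+k\bmod 2$ is an invariant of connected components. Concretely, $\alpha_{1}$ has $(m,k)=(0,0)$ while $v\alpha_{1}$ has $(m,k)=(1,0)$, so these two roots lie in different components and no sequence of simple reflections carries one to the other; your plan to climb the $m$-ladder by wrap-around moves fails because each such move simultaneously shifts $k$ by an odd amount. (This reflects a genuine feature: for $v\neq 1$ the root system has two $W$-orbits, $W\alpha_{0}$ and $W\alpha_{1}$, even though all simple reflections are conjugate.)

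The argument is repairable, since your framework only requires a known root in each component: the even-parity class contains $\alpha_{i,i}=\alpha_{i}$ and the odd-parity class contains $\alpha'_{1,n}=\alpha_{0}$, and one can check that each parity class is a single component (the reachable translations $(\pm 1,+1)$ of $(m,k)$ generate the even-sum sublattice). But you must actually identify the two components and exhibit a root in each; asserting connectivity is false. Note that the paper avoids this issue entirely by a different route for $R\subseteq\Phi$: it first shows $v^{\pm 1}\alpha$ is a root for each simple $\alpha$ by explicit length-two elements (e.g.\ $s_{\alpha_{0}}s_{\alpha_{1}}(\alpha_{0})=v\alpha_{1}$, $s_{\alpha_{1}}s_{\alpha_{0}}(\alpha_{1})=v^{-1}\alpha_{0}$), which handles the $m$-direction, and then obtains all $\pm(c_{k+1}\alpha_{i,j}+c_{k}\alpha'_{i,j})$ as the orbit of $\{\alpha_{i,j},\alpha'_{i,j}\}$ under the infinite dihedral subgroup $\langle s_{\alpha_{i,j}},s_{\alpha'_{i,j}}\rangle$ using Lemma \ref{identitygauss}; since those generators are non-simple reflections, the parity obstruction above does not arise.
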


\begin{proof}
We first show that if $\alpha$ is a root then $v^m\alpha$, where $m\in \mathbb{Z}$, is a root. It suffices to show that for $\alpha$ simple and  $m=\pm 1.$ For $\alpha_0$ we have $s_{\alpha_1}s_{\alpha_0}(\alpha_1)=v^{-1}\alpha_0$ and $s_{\alpha_n}s_{\alpha_0}(\alpha_n)=v\alpha_0.$ And we have $s_{\alpha_0}s_{\alpha_1}(\alpha_0)=v\alpha_1$,
$s_{\alpha_1}s_{\alpha_2}(v\alpha_1)=v\alpha_2, \ldots, s_{\alpha_{n-1}}s_{\alpha_n}(v\alpha_{n-1})=v\alpha_n$.
$s_{\alpha_0}s_{\alpha_n}(\alpha_0)=v^{-1}\alpha_n$ and $s_{\alpha_n}s_{\alpha_{n-1}}(v^{-1}\alpha_n)=v^{-1}\alpha_{n-1}, \ldots, s_{\alpha_2}s_{\alpha_1}(v^{-1}\alpha_2)=v^{-1}\alpha_1.$

 Since $(s_{\alpha_j}\ldots s_{\alpha_{i+2}}s_{\alpha_{i+1}}(\alpha_i))^{\vee}=s_{\alpha_j}\ldots s_{\alpha_{i+2}}s_{\alpha_{i+1}}(\alpha_i^{\vee})$ we have $\alpha_{i,j}^{\vee}=\alpha_i^{\vee}+\alpha_{i+1}^{\vee}+\ldots+\alpha_j^{\vee}.$ Similarly one sees that
 $(\alpha_{i,j}')^{\vee}=\alpha_0^{\vee}+v^{-1}(\alpha_1^{\vee}+\alpha_2^{\vee}+\ldots+\alpha_{i-1}^{\vee})+v(\alpha_{j+1}^{\vee}+\alpha_{j+2}^{\vee}+\ldots+\alpha_n^{\vee})$.
One checks that $s_{\alpha_{i,j}}(\alpha_{i,j}')=c_1\alpha_{i,j}'+c_2\alpha_{i,j}$ and $s_{\alpha_{i,j}'}(\alpha_{i,j})=c_1\alpha_{i,j}+c_2\alpha_{i,j}'(=-(c_{-1}\alpha_{i,j}+c_{-2}\alpha_{i,j}')).$
Then an inductive argument (together with Lemma \ref{identitygauss}) shows that by acting by  elements from the (infinite) dihedral subgroup $\mpair{s_{\alpha_{i,j}}, s_{\alpha_{i,j}'}}$ on the roots $\alpha_{i,j},\alpha_{i,j}'$ one gets the set
$\{\pm (c_{k+1}\alpha_{i,j}+c_k\alpha_{i,j}')|k\in \mathbb{Z}\}$.

Therefore we see $\Phi\supseteq \{\pm v^m(c_{k+1}\alpha_{i,j}+c_k\alpha_{i,j}')|m,k\in \mathbb{Z},1\leq i\leq j\leq n\}.$
To finish the proof one only needs to check that the right hand side is stable under the action of simple reflections. This may be done  by easy calculations,  noting  the following cases:
$$s_{\alpha_{p}}(c_{k+1}\alpha_{i,j}+c_k\alpha_{i,j}')=
c_{k+1}\alpha_{i,j}+c_k\alpha_{i,j}',\qquad  p\neq i-1,i,j,j+1$$
$$s_{\alpha_{i-1}}(c_{k+1}\alpha_{i,j}+c_k\alpha_{i,j}')=c_{k+1}\alpha_{i-1,j}+c_k\alpha_{i-1,j}',\qquad i\geq 2,$$
$$s_{\alpha_{i}}(c_{k+1}\alpha_{i,j}+c_k\alpha_{i,j}')=c_{k+1}\alpha_{i+1,j}+c_k\alpha_{i+1,j}',\qquad i\neq j,$$
$$s_{\alpha_{i}}(c_{k+1}\alpha_{i,i}+c_k\alpha_{i,i}')=-(c_{-k+1}\alpha_{i,i}+c_{-k}\alpha_{i,i}'), $$
$$s_{\alpha_{j}}(c_{k+1}\alpha_{i,j}+c_k\alpha_{i,j}')=c_{k+1}\alpha_{i,j-1}+c_k\alpha_{i,j-1}',\qquad i\neq j,$$
$$s_{\alpha_{j+1}}(c_{k+1}\alpha_{i,j}+c_k\alpha_{i,j}')=c_{k+1}\alpha_{i,j+1}+c_k\alpha_{i,j+1}', \qquad j\leq n-1,$$
$$s_{\alpha_0}(c_{k+1}\alpha_{1,j}+c_k\alpha_{1,j}')=-v^{-1}(c_{-k}\alpha_{j+1,n}+c_{-k-1}\alpha_{j+1,n}'),\qquad  j\leq n-1,$$
$$s_{\alpha_0}(c_{k+1}\alpha_{1,n}+c_k\alpha_{1,n}')=-(c_{-k-1}\alpha_{1,n}+c_{-k-2}\alpha_{1,n}'),$$
$$s_{\alpha_0}(c_{k+1}\alpha_{j,n}+c_k\alpha_{j,n}')=-v(c_{-k}\alpha_{1,j-1}+c_{-k-1}\alpha_{1,j-1}'), \qquad j\geq 2.$$
\end{proof}

We shall denote $\Phi$ as $\Phi_{(n,v)}$ if necessary to indicate its dependence on $n$ and $v$.
It is easily seen (e.g by symmetry) that for
 a root $\alpha\in \Phi$ and integer $m$,
 $(v^{m}\alpha)^{\vee}=v^{-m}\alpha^{\vee}$, which implies
  $s_{v^{m}\alpha}=s_{\alpha}$. We say that two roots $\alpha,\beta$ in $\Phi$ are
   associate, written $\alpha\sim \beta$, if one of them is a power of $v$ times the
   other. The reflections of $W$ correspond bijectively to the $\sim$-equivalence classes of positive roots.

\begin{lemma}\label{imdr}
The infinite maximal dihedral reflection subgroups of $W$ are the subgroups  $W_{i,j}:=\mpair{s_{\alpha_{i,j}},s_{\alpha_{i,j}'}}$ for $1\leq i\leq j\leq n$. The set of roots of $\Phi$ for which the corresponding reflection lies in $W_{i,j}$ is \begin{equation*}
\Phi_{i,j}:=\{\,\eta v^{m }(c_{k+1}\alpha_{i,j}+c_{k}\alpha'_{i,j})\mid \eta\in \set{\pm 1}, m,k\in \mathbb{Z}\,\}.
\end{equation*}
One has $\Phi=\dot\bigcup_{1\leq i\leq j\leq m}\Phi_{i,j}$.
\end{lemma}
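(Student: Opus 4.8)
The plan is to build on Proposition \ref{rootform} and to separate the purely group-theoretic assertions (which dihedral subgroups occur) from the root-theoretic ones (what their root sets are), exploiting the fact that $T$ and all reflection subgroups of $W$ depend only on the abstract Coxeter system $(W,S)$ and hence are independent of the scaling parameter $v$. Indeed $W_{i,j}=\langle s_{\alpha_{i,j}},s_{\alpha'_{i,j}}\rangle$ is generated by two reflections which, as words in $S$, do not involve $v$ (for instance $s_{\alpha_{i,j}}=(s_{\alpha_j}\cdots s_{\alpha_{i+1}})s_{\alpha_i}(s_{\alpha_{i+1}}\cdots s_{\alpha_j})$), so $W_{i,j}$ is the same subgroup of $W$ for every $v$; thus the claim that the $W_{i,j}$ are precisely the infinite maximal dihedral reflection subgroups may be checked in the convenient standard realization $v=1$. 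The union $\Phi=\bigcup_{1\le i\le j\le n}\Phi_{i,j}$ is immediate from Proposition \ref{rootform}, so only disjointness of this union, together with the two identifications just mentioned, remains.

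First I would record that each $W_{i,j}$ is infinite dihedral and compute its root set. From the identity $s_{\alpha_{i,j}}(\alpha'_{i,j})=c_1\alpha'_{i,j}+c_2\alpha_{i,j}$ in the proof of Proposition \ref{rootform} one reads off $(\alpha'_{i,j},\alpha_{i,j}^\vee)=-c_2=-(v+v^{-1})$, and symmetrically $(\alpha_{i,j},(\alpha'_{i,j})^\vee)=-(v+v^{-1})$, so the product of the off-diagonal NGCM entries is $(v+v^{-1})^2\ge 4$; hence $s_{\alpha_{i,j}}s_{\alpha'_{i,j}}$ has infinite order and $W_{i,j}$ is infinite dihedral. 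The orbit computation in the proof of Proposition \ref{rootform} shows that $\langle s_{\alpha_{i,j}},s_{\alpha'_{i,j}}\rangle$ carries $\{\alpha_{i,j},\alpha'_{i,j}\}$ onto $\{\pm(c_{k+1}\alpha_{i,j}+c_k\alpha'_{i,j})\mid k\in\mathbb Z\}$; these represent exactly the reflections of the dihedral group $W_{i,j}$. Closing this set under the associate relation $\sim$ (multiplication by $v^m$, which leaves the reflection unchanged since $s_{v^m\alpha}=s_\alpha$) yields precisely $\Phi_{i,j}$, and using the stated bijection between reflections and $\sim$-classes of positive roots this proves $\Phi_{i,j}=\{\beta\in\Phi\mid s_\beta\in W_{i,j}\}$.

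Next I would establish the subgroup classification at $v=1$, where $W$ acts as the affine Weyl group of type $A_n$. There the reflections are the reflections in the affine hyperplanes $H_{\overline\alpha,k}$ with $\overline\alpha$ a positive root of the finite system $A_n$ and $k\in\mathbb Z$; two reflections generate an infinite dihedral group if and only if their hyperplanes are parallel, i.e.\ have the same direction $\overline\alpha$ (non-parallel hyperplanes meet and, by crystallographicity, generate a finite group). Consequently every reflection in an infinite dihedral subgroup shares a single direction $\overline\alpha$, so any such subgroup is contained in the full family $\{s_{\overline\alpha,k}\mid k\in\mathbb Z\}$, which is itself infinite dihedral; this shows the full families are exactly the infinite maximal dihedral reflection subgroups and that each reflection lies in a unique one (its direction). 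The directions are the positive roots of $A_n$, namely the images $\overline\alpha_{i,j}$ of $\alpha_{i,j}$ modulo $\delta:=\alpha_0+\cdots+\alpha_n$ for $1\le i\le j\le n$; since $\alpha_{i,j}$ and $\alpha'_{i,j}=\delta-\alpha_{i,j}$ lie in parallel hyperplanes of direction $\overline\alpha_{i,j}$ and generate that family, the family for $\overline\alpha_{i,j}$ is $W_{i,j}$. Hence the $W_{i,j}$ are precisely the infinite maximal dihedral reflection subgroups, they are pairwise distinct, and they partition $T$.

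Finally, disjointness of $\Phi=\bigcup\Phi_{i,j}$ follows: a root lying in $\Phi_{i,j}\cap\Phi_{i',j'}$ would, by the second step, give a reflection common to $W_{i,j}$ and $W_{i',j'}$, contradicting the uniqueness just established. (Alternatively one can argue disjointness directly and $v$-independently by recovering $(i,j)$ from the coordinates of a root in the basis $\Pi$: the coefficient of $\alpha_0$ is $\pm v^{m}c_k$, the coefficients on $\alpha_i,\dots,\alpha_j$ equal $\pm v^{m}c_{k+1}$, and those on the two flanking blocks equal $v^{\pm1}$ times the $\alpha_0$-coefficient; for $v>0$, $v\ne1$ and $c_k\ne 0$ the quantities $c_k,\,vc_k,\,v^{-1}c_k,\,c_{k+1}$ are pairwise distinct, so the three blocks and hence $(i,j)$ are determined from the ratios of coordinates, while the degenerate cases $k=0$ and $v=1$ are read off directly from the support.) I expect the genuine obstacle to be the third step --- rigorously showing that the $W_{i,j}$ exhaust the infinite maximal dihedral reflection subgroups and are maximal --- since this is where the affine structure of $\widetilde A_n$ (or, equivalently, the general theory of dihedral reflection subgroups) does the real work; the other steps are bookkeeping layered on Proposition \ref{rootform}.
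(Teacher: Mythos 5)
Your proposal is correct, and its overall skeleton coincides with the paper's: both reduce the classification of infinite maximal dihedral reflection subgroups to the case $v=1$ (since reflection subgroups, maximality and infiniteness are intrinsic to $(W,S)$ and the generators $s_{\alpha_{i,j}},s_{\alpha'_{i,j}}$ are $v$-independent words in $S$), and both then recover $\Phi_{i,j}$ from the $W_{i,j}$-orbit of $\{\alpha_{i,j},\alpha'_{i,j}\}$ computed in Proposition \ref{rootform}, closed under associates. The one substantive divergence is in the classification step at $v=1$: the paper works with the explicit list of positive roots $\pm\alpha_{i,j}+k\delta$, verifies $\mathbb{R}\Pi_{i,j}\cap\Phi=\Phi_{i,j}$ and invokes Remark 3.2 of \cite{Bruhat} for maximality, then checks that pairs of reflections from distinct families generate finite dihedral groups; you instead use the affine-hyperplane picture, characterizing infinite dihedral pairs by parallelism of their hyperplanes, so that the parallel families are visibly the infinite maximal dihedral subgroups and each reflection lies in exactly one of them. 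Your version is more self-contained (no citation needed for maximality) and makes the disjointness of the $\Phi_{i,j}$ an immediate corollary of the uniqueness of the containing family, whereas the paper leaves disjointness as ``clear from Proposition \ref{rootform}''; the paper's criterion from \cite{Bruhat}, on the other hand, is the one that generalizes beyond the affine setting. Your direct computation $(\alpha'_{i,j},\alpha_{i,j}^{\vee})=-(v+v^{-1})$, showing $W_{i,j}$ is infinite dihedral uniformly in $v$, is a minor additional convenience not present in the paper, which only records the value $-2$ at $v=1$.
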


\begin{proof}
We first assume that $v=1$. In this case, the realized root system is the standard reduced root system as defined in \cite{bjornerbrenti} or \cite{Hum}. We denote $\delta=\alpha_0+\alpha_1+\ldots+\alpha_n$. Then $(\delta,\gamma^{\vee})=(\gamma,\delta^{\vee})=0$ for all $\gamma\in \Phi.$ When $v=1$ it is well known (see for example \cite{DyerLehrer}, though it also follows from Proposition \ref{rootform} with $v=1$) that the set of positive roots is \begin{equation*}
\{\alpha_{i,j}+k\delta|1\leq i\leq j\leq n, k\in \mathbb{Z}_{\geq 0}\}\cup \{-\alpha_{i,j}+k\delta|1\leq i\leq j\leq n, k\in \mathbb{Z}_{>0}\}.
\end{equation*} Consider the dihedral reflection subgroup $W_{i,j}=\mpair{s_{\alpha_{i,j}},s_{\delta-\alpha_{i,j}}}$. Since we have $(\alpha_{i,j},(\delta-\alpha_{i,j})^{\vee})=-2$, the group is infinite and $\Phi_{i,j}:=\{\alpha\in \Phi|s_{\alpha}\in W_{i,j}\}=\{\alpha_{i,j}+k\delta|k\in\mathbb{Z}_{\geq 0}\}\cup \{-\alpha_{i,j}+k\delta|k\in\mathbb{Z}_{>0}\}$. One checks that
$\mathbb{R}\Pi_{i,j}\cap \Phi=\Phi_{i,j}$. This guarantees that $W_{i,j}$ is maximal by Remark 3.2 of \cite{Bruhat}.
For $\{i,j\}\neq \{p,q\}$, the dihedral reflection subgroups $\mpair{s_{\epsilon_1\alpha_{i,j}+t\delta},s_{\epsilon_2\alpha_{p,q}+r\delta}}, \epsilon_1, \epsilon_2\in \{1,-1\}, t,r\in \mathbb{Z}$ are finite. Hence the groups $W_{i,j}$ exhaust all maximal dihedral reflection subgroups.

Note $s_{\delta-\alpha_{i,j}}=s_{\alpha_{j+1}}s_{\alpha_{j+2}}\ldots s_{\alpha_{n}}s_{\alpha_{i-1}}s_{\alpha_{i-2}}\ldots s_{\alpha_1}(\alpha_0).$ For arbitrary $v$ the latter is $s_{\alpha_{i,j}'}$. For any $v$, $s_{\alpha_{i,j}}=s_{\alpha_{j}}\ldots s_{\alpha_{i+1}}(\alpha_{i})$. Since the maximal dihedral reflection subgroups are independent of $v$, this proves the first assertion.
The formula for  $\Phi_{i,j}$ follows since its right hand side  is the set of all roots in $\Phi$ in the plane spanned by $\alpha_{i,j}$ and $\alpha'_{i,j}$,
while,  from the preceding proof, $W_{i,j}\{\alpha_{i,j},\alpha'_{i,j}\}$ consists of the roots $\pm (c_{k+1}\alpha_{i,j}+c_{k}\alpha'_{i,j})$.  The final claim is clear  from Proposition \ref{rootform}.
\end{proof}

\subsection{Chirotopes for $\widetilde A_{n}$} Our next goal is to compute the values of  the
chirotope map of the oriented matroid
$(\Phi,-,\cone_{\Phi})$ by calculating the determinant of
$n+1$ roots $\gamma_{0},\ldots, \gamma_{n}\in \Phi$.
More precisely, we calculate the determinant of the linear
operator which maps
$\alpha_{j}\mapsto \gamma_{j}$ for $j=0,\ldots, n$.
Regard $\gamma_j$ as a column vector, with rows indexed by $0,\ldots, n$,  with the  entry in its $i$-th row being the coefficient of $\alpha_{i}$ in $\gamma_{j}$.
Let $(\gamma_{0},\ldots, \gamma_{n})$ be the
$(n+1)\times(n+1)$-matrix, with rows and columns indexed by $0,\ldots, n$, whose $j$-th column  is $\gamma_{j}$ (viewed as column vector); this matrix represents the above linear transformation with respect to the ordered basis $\alpha_{0},\ldots, \alpha_{n}$ of  $V$, and we seek
to determine
$\det(\gamma_0,\gamma_1,\ldots,\gamma_n)$.
 For later use,  observe that  for $1\leq i\leq j\leq n$, $m,k\in \mathbb{Z}$ and $\eta\in \set{\pm 1}$, (the coordinate vector of the) root $\eta v^{m}(c_{k+1}\alpha_{i,j}+c_{k}\alpha'_{i,j})$  is
\begin{equation}
\label{coord}\eta v^{m} (c_{k},vc_{k},\ldots,vc_{k},c_{k+1},\ldots,c_{k+1},v^{-1}c_{k},\ldots,v^{-1}c_{k})^T
\end{equation}
where $T$ denotes transpose, the entry $c_{k}$ is in row $0$, the first $c_{k+1}$ is in row $i$ and the last $c_{k+1}$ is in row $j$.
The main result we shall establish is the following:
\begin{theorem}\label{Adet}
Suppose $\gamma_0, \gamma_1, \ldots, \gamma_{n}$ are $n+1$ roots of $W$ of type $\widetilde{A}_{n}, n\geq 2$. Regard $\gamma_i$ as coordinate vectors as above. Then $\det(\gamma_0,\gamma_1,\ldots,\gamma_n)$ is either $0$ or of the form
$$\mu v^l(v-v^{-1})^{m-1}\Pi_{k=1}^mc_{h_k}$$
where $\mu\in \{1,-1\}$, $m\geq 1$ and $h_{1},\ldots, h_{m}\in \mathbb{Z}_{+}$. \end{theorem}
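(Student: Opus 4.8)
The plan is to convert $\det(\gamma_0,\dots,\gamma_n)$, by determinant‑preserving row and column operations, into the determinant of a weighted incidence matrix of a graph, and then to read off the factorization from that graph's cycle structure. First I would remove the scalar prefactors: writing $\gamma_p=\eta_p v^{m_p}\bigl(c_{k_p+1}\alpha_{i_p,j_p}+c_{k_p}\alpha'_{i_p,j_p}\bigr)$ as in $(\ref{coord})$ and pulling $\eta_p v^{m_p}$ out of the $p$-th column yields a global factor $\mu_0 v^{l_0}$ with $\mu_0=\prod_p\eta_p\in\{\pm1\}$ and $l_0=\sum_p m_p$, so it suffices to treat the matrix $M$ whose $p$-th column is the coordinate vector of $w_p:=c_{k_p+1}\alpha_{i_p,j_p}+c_{k_p}\alpha'_{i_p,j_p}$. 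Next I would put $M$ into a sparse form: subtracting each row from its successor and then subtracting $(v-1)$ times row $0$ from row $1$ are determinant-preserving, and using $c_{k+1}-vc_k=v^{-k}$ and $v^{-1}c_k-c_{k+1}=-v^{k}$ one finds that the $p$-th column becomes a vector with at most three nonzero entries, namely $c_{k_p}$ in row $0$, $v^{-k_p}$ in row $i_p$, and $-v^{k_p}$ in row $j_p+1$ (the last absent when $j_p=n$).

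Away from its top row, the resulting matrix $N$ is exactly the weighted incidence matrix (rows $1,\dots,n$) of the graph $G$ on vertex set $\{1,\dots,n+1\}$ with one edge $\{i_p,\,j_p+1\}$ per column, carrying weight $v^{-k_p}$ at the smaller and $-v^{k_p}$ at the larger endpoint. Since $c_{k_p}=-(v-v^{-1})^{-1}(v^{-k_p}-v^{k_p})$ is $-(v-v^{-1})^{-1}$ times the sum of these two endpoint weights, adding $(v-v^{-1})^{-1}$ times rows $1,\dots,n$ to row $0$ rewrites row $0$ as $\pm(v-v^{-1})^{-1}$ times the one row (the vertex $n+1$ row) of the full incidence matrix $B$ of $G$ that is missing from $N$. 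Hence $\det N=\pm(v-v^{-1})^{-1}\det B$ and $\det(\gamma_0,\dots,\gamma_n)=\pm v^{l_0}(v-v^{-1})^{-1}\det B$.

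It then remains to compute the determinant of the square ($\#V=\#E=n+1$) weighted incidence matrix $B$. Such a determinant vanishes unless every connected component of $G$ is unicyclic, and otherwise factors over components. Pendant (tree) edges can be peeled off, each forcing its leaf and contributing only a monomial $\pm v^{(\cdots)}$; each remaining cycle $C$ is handled by its two cyclic edge-to-vertex matchings, whose weight products $X,Y$ satisfy $XY=(-1)^{\operatorname{length}(C)}$ and whose permutations differ by a cyclic permutation of sign $(-1)^{\operatorname{length}(C)-1}$, so the cycle contributes $\pm(X-X^{-1})=\pm(v^{K_C}-v^{-K_C})=\pm(v-v^{-1})c_{|K_C|}$ for an integer $K_C$ (a signed sum of the $k_p$ around $C$). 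With $q$ the number of components this gives $\det B=\pm v^{l_1}(v-v^{-1})^{q}\prod_{c=1}^{q}c_{|K_c|}$ when nonzero, hence
$$\det(\gamma_0,\dots,\gamma_n)=\pm\,v^{l_0+l_1}(v-v^{-1})^{q-1}\prod_{c=1}^{q}c_{|K_c|},$$
which is the asserted form with $m=q\ge1$ and $h_c=|K_c|\in\mathbb{Z}_{+}$; if $G$ has a non-unicyclic component or some $K_c=0$ then $\det B=0$ and the determinant vanishes, covering the other alternative.

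The main obstacle is the last step: establishing the weighted incidence-determinant formula with correct signs, i.e.\ that every cycle contributes exactly $\pm(v^{K_C}-v^{-K_C})$ while tree edges contribute only monomials, and that acyclic or otherwise degenerate configurations force $\det B=0$. This is a weighted all-minors matrix-tree--type computation; the delicate point is the sign $(-1)^{\operatorname{length}-1}$ relating the two cyclic matchings, which is precisely what turns $X+(\text{sign})X^{-1}$ into $X-X^{-1}$ and thereby produces the single factor $v^{K_C}-v^{-K_C}$. One must also check the reduction to $N$ at the boundary columns $i_p=1$ and $j_p=n$, where the row-$1$ adjustment and the absence of a row-$(j_p+1)$ entry make the incidence interpretation hold.
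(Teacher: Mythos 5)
Your argument is correct, and it takes a genuinely different route from the paper's. The paper proves a sharper statement (Theorem \ref{Adetprec}) by induction on $n$: when no two of the $\gamma_p$ lie in a common maximal dihedral subsystem $\Phi_{i,j}$, it conjugates one root to a simple root $\alpha_i$, expands along that column and deletes row $i$ to land in $\widetilde A_{n-1}$ (Lemma \ref{reducstep}); the remaining case of two roots sharing a $\Phi_{i,j}$ is treated separately (Lemmas \ref{partone} and \ref{prop2rmds}) by row-reducing an $n\times n$ block to columns with two nonzero entries and locating a circuit in an auxiliary graph (the proposition preceding Corollary \ref{coro1}, and Corollary \ref{coro2}). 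You dispense with the induction entirely: the same kind of row operations, applied globally, convert the full $(n+1)\times(n+1)$ matrix into the weighted vertex--edge incidence matrix of a graph with $n+1$ vertices and $n+1$ edges, and the determinant is evaluated by the unicyclic-component analysis. I checked the key identities --- $c_{k+1}-vc_k=v^{-k}$, $v^{-1}c_k-c_{k+1}=-v^{k}$, the row-$0$ entry $c_{k_p}=-(v-v^{-1})^{-1}\bigl(v^{-k_p}-v^{k_p}\bigr)$ reconstructing the missing vertex row, $XY=(-1)^{L}$ for the two cyclic matchings, and the relative sign $(-1)^{L-1}$ --- and they all hold, including at the boundary columns $i_p=1$, $j_p=n$ and for parallel edges (which is exactly the paper's two-roots-in-one-$\Phi_{i,j}$ case, so your method subsumes it). Your approach buys more structural information: $m$ is the number of connected components of the edge graph and each $h_k$ is a signed sum of the parameters $k_p$ around a cycle, from which the supplementary claims of Theorem \ref{Adetprec} (that $m=1$ is forced for $n=2$ and $m>1$ is achievable for $n\geq 3$) follow by inspection; indeed your formula reproduces the paper's example with $d=v-v^{-1}$ as two doubled edges giving $q=2$. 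Two small additions would make the write-up complete: a sentence proving that $\det B=0$ unless every component is unicyclic (a counting or Hall's-theorem argument on the edge-to-endpoint bijections appearing in the Leibniz expansion), and the remark --- also needed in the paper's own proof --- that the intermediate divisions by $v-v^{-1}$ are harmless because the final statement is an identity of Laurent polynomials in $v$, so the case $v=1$ follows by specialization.
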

More precisely, this result holds (with $v$ regarded as an indeterminate) for the generic reflection representation over $\mathbb{R}[v,v^{-1}]$. It would be equivalent to just require $h_{i}\in \mathbb{Z}$ since $c_{n}=-c_{n}$ and $c_{0}=0$.

Observe that if
there is a permutation $\sigma$ of $\{0,\ldots, n\}$ with $\sgn(\sigma)=\epsilon \in \{\pm 1\}$,  an element $w\in W$,  integers $\eta_{i}\in \{\pm 1\}$,   and integers $m_{i}$ such that  with $\gamma'_{i}=\eta_{i}v^{m_{i}}w(\gamma_{\sigma(i)})$ for $i=0,\ldots, n$,
then \begin{equation}\label{reduceqn}
\det(\gamma'_{0},\ldots, \gamma'_{n}))=(-1)^{l(w)}\epsilon \eta_{0}\cdots \eta_{n}v^{m_{0}+\ldots +m_{n}}\det(\gamma_{0},\ldots, \gamma_{n}),
\end{equation}
so  validity of the theorem for $\gamma_{0}\ldots, \gamma_{n}$ is equivalent to its validity for
$\gamma'_{0}\ldots, \gamma'_{n}$.

We begin the proof with a series of results  to deal with the  cases  in which some maximal dihedral root subsystems $\Phi_{i,j}$ contain exactly two of the roots.
\begin{proposition}
Let $A=(a_{ij})$ be a $n\times n$ matrix with $n\geq 2$. Assume that the $i$-th column is of the form
$$(0,\ldots,0,x_i,0,\ldots,0,-1,0\ldots,0)^T$$ for $i=1,\ldots, n$.
Then $\det(A)$ is either 0 or $$\mu\Pi(x_{i})\Pi(x_{j_1}x_{j_2}\ldots x_{j_l}-x_{k_1}x_{k_2}\ldots x_{k_m})$$
where $\mu\in \{1,-1\}$, each $x_q$ appears at most once in this formula and each $l$, $m$ appearing is  positive.
\end{proposition}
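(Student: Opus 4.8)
The plan is to read $A$ as a weighted incidence matrix of a graph and to compute $\det A$ through the combinatorics of that graph. Index the rows by a vertex set $V=\{1,\dots,n\}$ and the columns by an edge set $E=\{1,\dots,n\}$; since the $i$-th column has exactly the two nonzero entries $x_i$ and $-1$, I associate to column $i$ an edge $e_i$ joining the two rows in which these entries sit. This produces a graph $G$ on $n$ vertices with $n$ edges (loops excluded, multiple edges allowed). In the Leibniz expansion $\det A=\sum_{\sigma}\sgn(\sigma)\prod_j a_{\sigma(j),j}$, a permutation $\sigma$ contributes a nonzero term precisely when, for every column $j$, the chosen row $\sigma(j)$ is an endpoint of $e_j$. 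Thus the nonzero terms correspond to orientations of $G$ in which every vertex is the head of exactly one edge, each contributing $\prod_j w_j$ with $w_j\in\{x_j,-1\}$ according to which endpoint of $e_j$ is selected.

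First I would dispose of the vanishing case. Writing $G=\bigsqcup_t G_t$ as its connected components with vertex and edge sets $V_t,E_t$, any contributing $\sigma$ must send $V_t$ into $E_t$ injectively, since an edge meets only vertices of its own component; hence $|V_t|\le|E_t|$ for all $t$, and with $\sum_t|V_t|=\sum_t|E_t|=n$ this forces $|V_t|=|E_t|$ for every $t$. Therefore, if some component fails to be unicyclic (equivalently $|E_t|\ne|V_t|$), no permutation contributes and $\det A=0$, the first alternative. When every component is unicyclic, grouping rows by $V_t$ and columns by $E_t$ makes $A$ block diagonal with square blocks $A_t$, so $\det A=\mu_0\prod_t\det A_t$ with $\mu_0=\pm1$ the sign of the regrouping permutation.

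The heart of the argument is the per-component computation, which I would carry out by induction on the number of tree (non-cycle) edges of a unicyclic $G_t$. If $G_t$ has a leaf vertex $v$, it lies on a unique edge $e$; expanding $\det A_t$ along row $v$ leaves a single nonzero entry (value $x_e$ or $-1$), and deleting that row and column yields the block of the unicyclic graph $G_t-v$, with one fewer vertex and edge. This peels off a forced tree edge contributing a factor in $\{x_e,-1\}$, and iterating reduces to the pure cycle. For a cycle on $g$ vertices the block is cyclic bidiagonal, and a direct two-term expansion gives $\det=\prod_i p_i+(-1)^{g+1}\prod_i q_i$, where on edge $e_i$ one has $\{p_i,q_i\}=\{x_{e_i},-1\}$; collecting the $x$'s, this equals $\pm\bigl(\prod_{i\in S}x_{e_i}-\prod_{i\notin S}x_{e_i}\bigr)$, a difference of two monomials indexed by the two cyclic orientations. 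Assembling over components, the tree edges supply the factor $\prod(x_i)$ (with any $-1$'s absorbed into a global sign $\mu$) and each cycle supplies one difference factor; since every column lies in exactly one component and is used in exactly one place, each $x_q$ appears at most once, giving the stated form.

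I expect the main obstacle to be the sign bookkeeping in the cycle computation and, more delicately, the claim that both exponents $l,m$ are positive: the two-monomial cycle factor degenerates to $\prod x-1$ exactly when one cyclic orientation selects the $x$-endpoint at every cycle edge, i.e. when $S$ or its complement is empty. I would need either to rule this configuration out for the matrices actually arising here, or to read the degenerate monomial $1$ as admissible. I would also take care that the leaf-peeling induction is valid, using that every unicyclic graph other than a single cycle has a leaf, and that the forced orientation of the tree edges is common to both cyclic orientations, so that the tree factor cleanly factors out of the difference.
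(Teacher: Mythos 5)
Your graph-theoretic reading of the matrix is essentially the paper's own: it too forms the loopless graph on the $n$ rows whose $i$-th edge joins the two rows supporting column $i$, extracts a circuit, evaluates the corresponding cyclic block as a difference of two monomials, and then recurses after stripping columns with a single surviving nonzero entry (your leaf-peeling, performed in the other order). Your organization is cleaner in places --- the decomposition into unicyclic components, the vanishing criterion via $\vert E_t\vert=\vert V_t\vert$, and the explicit two-term cycle determinant $\pm\bigl(\prod_{i\in S}x_{e_i}-\prod_{i\notin S}x_{e_i}\bigr)$ are all correct --- so up to the point you yourself flag, the argument is sound.

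The one genuine gap is exactly the one you identify and leave open: that each cycle factor is a difference of two \emph{nonempty} monomials, i.e.\ that $l,m>0$. You cannot ``read the degenerate monomial $1$ as admissible,'' since the statement explicitly requires $l$ and $m$ positive and a factor $\prod x-1$ is not of the displayed shape. The degenerate configuration is excluded by a feature of the hypothesis you did not use: in each column the entry $x_i$ sits in a row strictly \emph{above} the entry $-1$. Suppose the cycle runs through rows $t_1,t_2,\ldots,t_g,t_1$ and that the cyclic orientation assigning edge $\{t_q,t_{q+1}\}$ the head $t_{q+1}$ (indices mod $g$) selects the $x$-entry at every edge; then the $x$-entry of that column lies in row $t_{q+1}$ and the $-1$ in row $t_q$, forcing $t_{q+1}<t_q$ for every $q$ and hence the impossible cyclic chain $t_1<t_g<\cdots<t_2<t_1$. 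The same argument applied to the opposite orientation rules out $S=\emptyset$, so both $S$ and its complement are nonempty. This is precisely the paper's remark that if $a_1,\ldots,a_{p-1}$ are all equal to $-1$ then so is $a_p$ (and likewise for the primed entries). With that observation inserted, your proof is complete.
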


\begin{proof}
We prove this by induction. If $n=2$ the matrix can only be
$$\left(
  \begin{array}{cc}
    x_1 & x_2 \\
    -1 & -1 \\
  \end{array}
\right)$$
whose determinant is $x_2-x_1.$

We claim that after suitably permuting the columns and rows this matrix must contain a $p\times p$-block  of the form
$$\left(
  \begin{array}{cccccc}
    a_1 & 0 & 0 & \cdots & 0 & a_p \\
    a_1' & a_2 & 0 &  \cdots & 0 & 0 \\
    0 & a_2' & a_3 &  \cdots & 0 & 0 \\
    \cdots & \cdots & \cdots & \cdots & \cdots & \cdots \\
    0 & 0 & 0 &  \cdots & a_{p-1} & 0 \\
    0 & 0 & 0 &  \cdots & a_{p-1}' & a_p' \\
  \end{array}
\right)$$
where $p\geq 2$ and  $\{a_q,a_q'\}=\{x_{j_q},-1\}$ for $q=1,\ldots, p$.

To see this, consider a loopless graph (with parallel edges allowed) with vertex set $\{1,\ldots, n\}$ and $n$ edges, joining   $k_{i},l_{i}$ for $i=1,\ldots, n$ where $k_{i}\neq l_{i}$ and  the entries $a_{k_{i},i}$, $a_{l_{i},i}$ of $A$ are non-zero.  Such a graph must have a circuit i.e. distinct edges with endpoints of the form $$\{t_1,t_2\},\{t_2,t_3\},\ldots,\{t_{p-1},t_p\},\{t_1,t_p\}$$
where $p\geq 2$.
For the above block, we  suitably permute the rows and columns of the submatrix of $A$   with rows indexed by $t_{1},\ldots, t_{p}$ and columns corresponding to the above $p$ edges.

The above submatrix has determinant $a_{1}\cdots a_{p-1}a_{p}'-(-1)^{p}a_{1}'\cdots a_{p-1}'a_{p}$. Note that if $a_{1},\ldots, a_{p-1}$ (resp.,  $a_{1}',\ldots, a_{p-1}'$) are all $-1$ then so is $a_{p}$ (resp., $a_{p}'$). It follows the determinant is of the form
 $\mu_1(x_{j_1}x_{j_2}\cdots x_{j_l}-x_{k_1}x_{k_2}\cdots x_{k_m})$ where $l+m=p$, $0<l<p$,  $\mu_1\in \{1,-1\}$  and $j_{1},\ldots, j_{l},k_{1},\ldots, k_{m}$ are pairwise distinct.

After suitably permuting the columns and rows,  $A$ is of the following form
$$\left(
  \begin{array}{cc}
    A_1 & A_2 \\
    0 & B \\
  \end{array}
\right)$$
with $A_1$ having the above form and the  column of $B$  are determined by deleting certain entries from the  columns of $A$ with index unequal to $j_1,\ldots,{j_l},{k_1},\ldots,{k_m}$.  Then $\det(A)=\mu_{1}'\det(B)(x_{j_1}x_{j_2}\cdots x_{j_l}-x_{k_1}x_{k_2} \cdots x_{k_m})$ with $ \mu_{1}'\in \{1,-1\}$.

If $B$ has a column consisting of all zeroes, then $\det(B)=0$ and we are done. So we assume this is not the case.
Now we define a sequence of matrices $\{B_i\}_{i=0}^{\infty}$.
Let $B_0=B$. If every column of $B_i$ has two nonzero entries or $B_i$ is a 1-by-1 matrix or $B_{i}=0$, then $B_{i+1}=B_i$.  Otherwise if column $q$ is the first column from the left which has only one nonzero entry and it is the entry $a_{q,v}$, then $B_{i+1}$ is obtained by removing $q$-th row and $v$-th column from $B_i$. Then eventually we always have $B_j=B_{j+1}=\ldots$ and we denote $B_{\infty}=B_j.$ Then it is clear that $\det(B)=\eta\Pi_{i\in K}x_i\det(B_{\infty}), \eta\in\{1,-1\}$.

$B_{\infty}$ must be a matrix of  one of the following forms:

(1) a zero matrix

(2) a one-by-one matrix with entry $-1$ or $x_{l}$ for some $l$

(3) a matrix with the same property as in the lemma.

Then the lemma follows by induction.
\end{proof}

\begin{corollary}\label{coro1}
Let $A$ be a $n\times n$ matrix with $n\geq 2$ and $i$-th  column  of the form
$$(0,\ldots,0,x_i,0,\ldots,0,-x_i^{-1},0\ldots,0)^T$$ for $i=1,\ldots, n$.
Then $\det(A)$ is either 0 or $$\mu\frac{1}{x_1x_2\ldots x_n}\Pi x_p^2\Pi_{k=1}^m(\Pi_{i\in I_k}x_i^2-\Pi_{j\in J_k}x_j^2)$$
where $\mu\in \{1,-1\}$ and the product $\Pi x_p^2\Pi_{k=1}^m(\Pi_{i\in I_k}x_i^2-\Pi_{j\in J_k}x_j^2)$ contains  each $x_q$  at most once.
\end{corollary}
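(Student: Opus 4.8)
The plan is to reduce this corollary directly to the preceding proposition by a single column-scaling operation. The columns of $A$ here differ from those treated in the proposition only in that the lower nonzero entry is $-x_i^{-1}$ in place of $-1$. Multiplying the $i$-th column of $A$ by $x_i$ (which is legitimate, since the appearance of $x_i^{-1}$ as a matrix entry presupposes $x_i\neq 0$) clears this discrepancy: column $i$ becomes $(0,\ldots,0,x_i^2,0,\ldots,0,-1,0,\ldots,0)^T$, which is exactly of the shape allowed in the proposition, with the scalar $x_i^2$ playing the role of the proposition's $i$-th variable.

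First I would form the matrix $B$ obtained from $A$ by scaling its $i$-th column by $x_i$ for each $i$. Since scaling a single column by $x_i$ multiplies the determinant by $x_i$, this gives $\det(B)=x_1x_2\cdots x_n\,\det(A)$, equivalently $\det(A)=\frac{1}{x_1x_2\cdots x_n}\det(B)$. By construction $B$ satisfies the hypotheses of the proposition verbatim, with the variables $y_i:=x_i^2$.

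Next I would apply the proposition to $B$. It yields that $\det(B)$ is either $0$ or of the form $\mu\prod(y_p)\prod_{k=1}^m\bigl(\prod_{i\in I_k}y_i-\prod_{j\in J_k}y_j\bigr)$, with $\mu\in\{1,-1\}$ and each $y_q$ occurring at most once throughout the expression. Substituting $y_i=x_i^2$ and multiplying by $\frac{1}{x_1x_2\cdots x_n}$ gives precisely the asserted formula for $\det(A)$; the alternative $\det(A)=0$ corresponds to the alternative $\det(B)=0$.

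I expect no serious obstacle: the entire combinatorial content resides in the proposition, and the corollary is just its specialization $y_i=x_i^2$ after extracting the overall scalar $\frac{1}{x_1x_2\cdots x_n}$. The only points needing care are routine bookkeeping---confirming that the ``each variable appears at most once'' condition is preserved under $y_q\mapsto x_q^2$ (it is, since the condition holds for the $y_q$), and tracking that the sign $\mu$ and the index sets $I_k,J_k$ carry over unchanged.
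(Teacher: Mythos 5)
Your reduction is correct and is exactly the derivation the paper intends: the corollary is stated without proof, and the only sensible route is the column rescaling $\det(A)=\frac{1}{x_1\cdots x_n}\det(B)$ followed by the proposition applied to $B$ with variables $y_i=x_i^2$. Nothing is missing.
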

\begin{corollary}\label{coro2}
Let $A$ be a $n\times n$ matrix with $n\geq 2$. Each column is of the form
$$(0,\ldots,0,v^{a_i},0,\ldots,0,-v^{-a_i},0\ldots,0)^T.$$
Then $\det(A)$ is either 0 or $$\mu v^l(v-v^{-1})^m\Pi_{k=1}^mc_{h_k}$$
where $\mu\in \{1,-1\}$ and $m\in \mathbb{N}$.
\end{corollary}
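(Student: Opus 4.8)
The plan is to deduce this corollary directly from Corollary \ref{coro1} by the substitution $x_i = v^{a_i}$ (where each $a_i \in \mathbb{Z}$), since then $-x_i^{-1} = -v^{-a_i}$, so the matrix of the present statement is exactly an instance of the matrix treated there. First I would invoke Corollary \ref{coro1}: if the determinant vanishes we are in the first alternative and finished, so I may assume $\det(A)\neq 0$, in which case
$$\det(A) = \mu\,\frac{1}{x_1 x_2 \cdots x_n}\,\prod_p x_p^2\,\prod_{k=1}^m\Bigl(\prod_{i \in I_k} x_i^2 - \prod_{j \in J_k} x_j^2\Bigr),$$
with $\mu\in\{1,-1\}$ and each $x_q$ occurring at most once in the displayed product.

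Next I would perform the substitution term by term. The monomial prefactor $\frac{1}{x_1\cdots x_n}\prod_p x_p^2$ becomes $v^{l_0}$ for an explicit integer $l_0$, since it is a product of integer powers of the $x_i = v^{a_i}$. For each $k$, setting $P_k := \sum_{i\in I_k} a_i$ and $Q_k := \sum_{j\in J_k} a_j$ (both integers, as $I_k, J_k$ are finite and the $a_i$ are integers), the $k$-th binomial factor becomes $v^{2P_k} - v^{2Q_k}$. Factoring out $v^{P_k+Q_k}$ and applying the defining identity for the quantum integer in the form $v^N - v^{-N} = (v - v^{-1})\,c_N$ gives
$$v^{2P_k} - v^{2Q_k} = v^{P_k+Q_k}(v - v^{-1})\,c_{h_k}, \qquad h_k := P_k - Q_k \in \mathbb{Z}.$$
Collecting everything, the monomial powers of $v$ combine into a single $v^l$ with $l = l_0 + \sum_k (P_k + Q_k)$, the $m$ copies of $(v-v^{-1})$ combine into $(v-v^{-1})^m$, and the quantum integers assemble into $\prod_{k=1}^m c_{h_k}$, yielding $\det(A) = \mu\,v^l (v-v^{-1})^m \prod_{k=1}^m c_{h_k}$, which is the claimed form.

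There is no serious obstacle here; the argument is essentially a bookkeeping exercise riding on Corollary \ref{coro1}. The one point that genuinely needs care is the integrality of the exponents $P_k, Q_k, h_k$, which is exactly what guarantees that each $c_{h_k}$ is a bona fide quantum integer rather than a meaningless symbol. I would also remark that the degenerate subcase is absorbed automatically: if some $h_k = 0$ then $c_0 = 0$ forces $\det(A) = 0$, consistently returning us to the first alternative of the statement.
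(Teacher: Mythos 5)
Your proposal is correct and follows essentially the same route as the paper: apply Corollary \ref{coro1} with $x_i=v^{a_i}$, absorb the monomial prefactor into $v^l$, and rewrite each factor $v^{2A_k}-v^{2B_k}$ as $v^{A_k+B_k}(v-v^{-1})c_{A_k-B_k}$. Your added remarks on integrality of the exponents and on the $c_0=0$ degeneracy are consistent with (and slightly more explicit than) the paper's brief note that the manipulation is valid for all $v>0$.
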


\begin{proof}
Corollary \ref{coro1} shows that the determinant is zero or
$$\mu v^l\Pi_{k=1}^m(v^{2A_k}-v^{2B_k})$$
$$=\mu v^l\Pi_{k=1}^mv^{A_k+B_k}(v^{-B_k+A_k}-v^{-A_k+B_k})$$
$$=\mu v^{l'}\Pi_{k=1}^m\frac{v-v^{-1}}{v-v^{-1}}(v^{-B_k+A_k}-v^{-A_k+B_k})$$
Clearly the assertion holds for any $v>0$ even though the second equality above requires $v\neq 1.$
\end{proof}

\begin{lemma}\label{prop2rmds} {\rm Theorem \ref{Adet}} holds in the case that there exist $i\neq j$ in $\set{0,\ldots, n}$ such that  $\gamma_{i}=\alpha_0$ and $\gamma_{j}=c_{k_{j}}\alpha_0+c_{k_{j}+1}(\alpha_1+\ldots+\alpha_n)$ for some $k_j\in \mathbb{Z}$.  \end{lemma}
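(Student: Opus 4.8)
The plan is to use the two distinguished roots to strip away two rows of the coordinate matrix $(\gamma_0,\ldots,\gamma_n)$, leaving a matrix to which Corollary \ref{coro2} applies. First I would record the relevant coordinate vectors. Since $\alpha'_{1,n}=\alpha_0$ and $\alpha_{1,n}=\alpha_1+\ldots+\alpha_n$, both distinguished roots lie in $\Phi_{1,n}$: by \eqref{coord} the root $\gamma_i=\alpha_0$ has coordinate vector $(1,0,\ldots,0)^T$, while $\gamma_j=c_{k_j+1}\alpha_{1,n}+c_{k_j}\alpha'_{1,n}$ has coordinate vector $(c_{k_j},c_{k_j+1},\ldots,c_{k_j+1})^T$. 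Expanding $\det(\gamma_0,\ldots,\gamma_n)$ along the column $\gamma_i$ reduces it, up to sign, to the $n\times n$ minor $M$ obtained by deleting row $0$ and column $i$; in $M$ the column coming from $\gamma_j$ is the constant vector $(c_{k_j+1},\ldots,c_{k_j+1})^T$.

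Next I would clear this constant column. The determinant-preserving row operations $R_r\mapsto R_r-R_{r+1}$ for $r=1,\ldots,n-1$ turn the $\gamma_j$ column into $c_{k_j+1}$ times the last standard basis vector. For any other column, coming from a root $\eta v^m(c_{k+1}\alpha_{p,q}+c_k\alpha'_{p,q})$, the same operations collapse its three constant blocks: using the identities $c_{k+1}-vc_k=v^{-k}$ and $vc_{k+1}-c_k=v^{k+1}$ (immediate from the definition of $c_k$), the surviving entries are $-\eta v^{m-k}$ in row $p-1$ (present only if $p\geq 2$), $\eta v^{m+k}$ in row $q$ (present only if $q\leq n-1$), and a block value in row $n$. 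Expanding along the cleared $\gamma_j$ column then deletes row $n$ together with all the row-$n$ entries, giving $\det(\gamma_0,\ldots,\gamma_n)=\pm c_{k_j+1}\det(M')$ for an $(n-1)\times(n-1)$ matrix $M'$. The whole point of the hypothesis is that these two roots let us delete exactly rows $0$ and $n$, stripping off precisely the entries that were not monomials in $v$.

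In $M'$ every column is either zero (if its root lies in $\Phi_{1,n}$, forcing $\det=0$), has a single nonzero entry (if exactly one of $p=1$, $q=n$ holds), or has exactly two nonzero entries $-\eta v^{m-k}$ and $\eta v^{m+k}$; factoring $-\eta v^m$ out of a two-entry column puts it in the form $(\ldots,v^{-k},\ldots,-v^{k},\ldots)^T$ required by Corollary \ref{coro2}. To dispose of the single-entry columns I would expand repeatedly along them, each expansion extracting a monomial $\pm v^{?}$ and deleting one row and one column while preserving the two-entry form of the rest, until either a zero column appears (so $\det M'=0$) or all surviving columns have exactly two nonzero entries, at which point Corollary \ref{coro2} yields $\det M'=\mu v^l(v-v^{-1})^{m'}\prod_{k=1}^{m'}c_{h_k}$. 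Reassembling, $\det(\gamma_0,\ldots,\gamma_n)$ is a monomial times $c_{k_j+1}$ times this; since each difference factor contributes exactly one power of $v-v^{-1}$ while the extra factor $c_{k_j+1}$ contributes none, the result has the shape $\mu' v^{l'}(v-v^{-1})^{m-1}\prod_{k=1}^m c_{h_k}$ with $m=m'+1\geq 1$ (using $c_0=0$ and $c_{-t}=-c_t$ to handle the cases $k_j+1\leq 0$). The main obstacle is the bookkeeping around the single-entry columns: one must check that the recursive expansion always lands back in the hypotheses of Corollary \ref{coro2} and that the number of $c$-factors exceeds the number of $(v-v^{-1})$-factors by exactly one.
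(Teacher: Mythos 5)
Your proposal is correct and follows essentially the same route as the paper: use the $\alpha_0$ column to strip row $0$, row-reduce so the constant column contributes the factor $c_{k_j+1}$ and every other column collapses to at most two monomial entries, expand away single-entry columns, and finish with Corollary \ref{coro2}. The only difference is cosmetic — you apply consecutive row differences directly via the identities $c_{k+1}-vc_k=v^{-k}$ and $c_{k+1}-v^{-1}c_k=v^k$, whereas the paper first multiplies columns by $v-v^{-1}$, subtracts the top row, and then refactors — and your sign/index bookkeeping checks out.
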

\begin{proof}
Using  \eqref{reduceqn}, we may assume $\gamma_0=\alpha_0$ and $\gamma_n=c_{k_{n}}\alpha_0+c_{k_{n}+1}(\alpha_1+\ldots+\alpha_n)$.

We look at the $n\times n$ lower-right  block $A$ of the matrix $(\gamma_0,\gamma_1,\ldots,\gamma_n)$. Equation \eqref{coord} shows that after factoring out a power of $v$ and $\pm 1$,  each column is of the form
$$(vc_{k_i},\ldots,vc_{k_i},c_{k_i+1},\ldots,c_{k_i+1},v^{-1}c_{k_i},\ldots,v^{-1}c_{k_i})^T.$$
The last column  is
$$(c_{k_n+1},\ldots,c_{k_n+1}).$$

In the subsequent computation we assume $v\neq 1$ and $c_t=\frac{v^t-v^{-t}}{v-v^{-1}}$. The results apply for $v=1$  as well because it is a removable discontinuity of $\frac{v^t-v^{-t}}{v-v^{-1}}$.

Multiplying each column by ${v-v^{-1}}$ gives a matrix $B$. with columns  of the form
$$(v(v^{k_i}-v^{-k_i}),\ldots,v(v^{k_i}-v^{-k_i}),v^{k_i+1}-v^{-k_i-1},\ldots,v^{k_i+1}-v^{-k_i-1},$$$$v^{-1}(v^{k_i}-v^{-k_i}),\ldots,v^{-1}(v^{k_i}-v^{-k_i}))^T.$$
In particular the last column is of the form
$$(v^{k_n+1}-v^{-k_n-1},\ldots,v^{k_n+1}-v^{-k_n-1})^T.$$
And we have $\det(\gamma_0,\gamma_1,\ldots,\gamma_n)=\det(A)=(v-v^{-1})^{-n}\det(B)$

Now we compute the determinant of $B$. To do this, we subtract the first row from each of the rows below. There are three cases:

(1) the last column is

$$((v^{k_n+1}-v^{-1-k_n}),0,\ldots,0)^T$$

(2) the first entry of the column is $v^{k_i+1}-v^{-k_i-1}$

Then the column will become
$$(v^{k_i+1}-v^{-k_i-1},0,\ldots,0,v^{k_i}(v^{-1}-v),\ldots,v^{k_i}(v^{-1}-v))^T$$

(3) the first entry of the column is $v(v^{k_i}-v^{-k_i})$

Then this column will become
$$(v(v^{k_i}-v^{-k_i}),0,\ldots,0,v^{-k_i}(v-v^{-1}),\ldots,v^{-k_i}(v-v^{-1}),$$$$(v^{k_i}-v^{-k_i})(v^{-1}-v),\ldots,(v^{k_i}-v^{-k_i})(v^{-1}-v))^T$$

Now we examine the lower left $(n-1)\times (n-1)$ block of $B$ (denoted $C$). $\det(B)=\rho(v^{k_n+1}-v^{-1-k_n})\det(C)$ where $\rho\in \{1,-1\}$. So $\det(A)=\rho(v^{k_n+1}-v^{-1-k_n})(v-v^{-1})^{-n}\det(C)$.

We factor $v-v^{-1}$ from each column and obtain a $(n-1)\times (n-1)$ matrix $D$ whose columns are either

$$(0,\ldots,0,v^{k_i},\ldots,v^{k_i})^T$$

or

$$(0,\ldots,0,v^{-k_i},\ldots,v^{-k_i},-(v^{k_i}-v^{-k_i}),\ldots,-(v^{k_i}-v^{-k_i}))^T.$$

$\det(A)=\rho(v^{k_n+1}-v^{-1-k_n})(v-v^{-1})^{-1}\det(D)=\rho c_{k_n+1}\det(D)$.

Next we consecutively subtract the 1st, 2nd, 3rd ... row from the rows below and transform $D$ into a matrix with columns of the form either
$$(0,\ldots,0,v^{k_i},0,\ldots,0)^T$$
or
$$(0,\ldots,0,v^{-k_i},0,\ldots,0,-v^{k_i},0\ldots,0)^T.$$
Expanding the resulting determinant  down any column containing  a single entry until no such columns remain, we obtain a matrix to which  Corollary \ref{coro2} applies, proving   the desired formula.
\end{proof}

\begin{lemma}\label{partone} {\rm Theorem \ref{Adet}} holds in the case that there are $i,j$ with $1\leq i\leq j\leq n$ and $p\neq q$ in $\{0,\ldots, n\}$ such  $\gamma_{p},\gamma_{q}\in \Phi_{i,j}$.
\end{lemma}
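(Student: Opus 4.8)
The plan is to strip off the two‑dimensional dihedral contribution of the pair $\gamma_p,\gamma_q$ as an explicit Gaussian‑integer factor, and then reduce the remaining determinant to the already‑established Lemma~\ref{prop2rmds}. By Lemma~\ref{imdr}, since $\gamma_p,\gamma_q\in\Phi_{i,j}$ they both lie in the plane spanned by $\alpha_{i,j}$ and $\alpha'_{i,j}$, so I may write $\gamma_p=\eta_p v^{m_p}(c_{a+1}\alpha_{i,j}+c_a\alpha'_{i,j})$ and $\gamma_q=\eta_q v^{m_q}(c_{b+1}\alpha_{i,j}+c_b\alpha'_{i,j})$ for suitable signs $\eta_p,\eta_q$, powers of $v$, and integers $a,b$. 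Expanding $\det(\gamma_0,\ldots,\gamma_n)$ multilinearly in the two columns indexed by $p$ and $q$, every term with a repeated column vanishes and I am left with
$$\det(\gamma_0,\ldots,\gamma_n)=\eta_p\eta_q v^{m_p+m_q}\,(c_{a+1}c_b-c_ac_{b+1})\,D,$$
where $D$ is the determinant of the same matrix with its $p$‑th and $q$‑th columns replaced by $\alpha_{i,j}$ and $\alpha'_{i,j}$. The scalar $c_{a+1}c_b-c_ac_{b+1}$ equals $c_{b-a}$ by Lemma~\ref{identitygauss} (take $m=-b-1$ there and use $c_{-n}=-c_n$); in particular it is $c_0=0$ precisely when $\gamma_p\sim\gamma_q$, so the degenerate case of parallel columns needs no separate treatment.

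It remains to put $D$ into the shape of Theorem~\ref{Adet}, and here I would use \eqref{reduceqn} to move the two distinguished columns into the standard configuration of Lemma~\ref{prop2rmds}. Since all finite roots of the underlying $A_n$ are conjugate under the finite Weyl group $W_0=\langle s_{\alpha_1},\ldots,s_{\alpha_n}\rangle$ and $\alpha_{1,n}=\alpha_1+\ldots+\alpha_n$ is the highest root, I can choose $w\in W_0$ with $w(\alpha_{i,j})=\alpha_{1,n}$. By Lemma~\ref{imdr} the maximal dihedral subgroups $W_{i,j}$, their canonical generators, and the reflections of $W$ are all independent of $v$; conjugating by this $v$‑independent $w$ therefore sends $W_{i,j}$ to $W_{1,n}$, and, as one checks at $v=1$ where $\alpha'_{i,j}=\delta-\alpha_{i,j}$ and $w$ fixes $\delta$, sends the canonical generator $s_{\alpha'_{i,j}}$ to $s_{\alpha_0}$. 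Consequently $w(\alpha_{i,j})=\alpha_{1,n}$ and $w(\alpha'_{i,j})$ is an associate of $\alpha_0=\alpha'_{1,n}$. Applying $w$ through \eqref{reduceqn} and absorbing the resulting signs and powers of $v$ turns $D$, up to a permitted scalar, into a determinant whose two distinguished columns are $\alpha_0$ and $\alpha_{1,n}=c_0\alpha_0+c_1(\alpha_1+\ldots+\alpha_n)$ — exactly the hypothesis of Lemma~\ref{prop2rmds} with $k_j=0$.

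By Lemma~\ref{prop2rmds}, either $D=0$ (whence $\det(\gamma_0,\ldots,\gamma_n)=0$) or $D$ has the asserted form. Because $k_j=0$ makes the factor $c_{k_j+1}=c_1=1$ trivial, inspection of that lemma's output shows the power of $v-v^{-1}$ is equal to, rather than one less than, the number of its Gaussian‑integer factors; multiplying $D$ by $c_{b-a}$ then raises the number of factors by one while leaving the power of $v-v^{-1}$ unchanged, so $\det(\gamma_0,\ldots,\gamma_n)=\pm v^{m_p+m_q}c_{b-a}\,D$ is again of the form $\mu v^l(v-v^{-1})^{m-1}\prod_{k=1}^m c_{h_k}$ demanded by Theorem~\ref{Adet}.

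I expect the main obstacle to be the verification in the second paragraph that a finite Weyl group element carrying $\alpha_{i,j}$ to the highest root simultaneously carries $\alpha'_{i,j}$ to an associate of $\alpha_0$ — equivalently, that conjugation matches the canonical generators of $W_{i,j}$ with those of $W_{1,n}$. A naive attempt to normalize $\gamma_p,\gamma_q$ directly inside $\Phi_{1,n}$ founders on the two reflection‑conjugacy classes of the infinite dihedral group $W_{1,n}$, since an arbitrary root of $\Phi_{1,n}$ need not lie in the $W_{1,n}$‑orbit of $\alpha_0$. Working instead with the primitive roots $\alpha_{i,j},\alpha'_{i,j}$ (after extracting the factor $c_{b-a}$) sidesteps this entirely, because the $v$‑independence of reflections and maximal dihedral subgroups from Lemma~\ref{imdr} reduces the required matching to the classical affine root system at $v=1$.
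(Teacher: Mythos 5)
Your argument is correct, but it takes a genuinely different route from the paper's, which is a three-line reduction: using \eqref{reduceqn}, conjugate by some $w\in W$ so that one of the two roots becomes an associate of $\alpha_0$ (possible because all reflections of $\widetilde A_n$ are $W$-conjugate); by Lemma \ref{imdr} this forces $(i,j)=(1,n)$, so the other root is automatically an associate, up to sign, of $c_k\alpha_0+c_{k+1}(\alpha_1+\cdots+\alpha_n)$, and Lemma \ref{prop2rmds} applies as stated. You instead first peel off the dihedral contribution $c_{a+1}c_b-c_ac_{b+1}=c_{b-a}$ by bilinearity and then feed only the normalized pair $(\alpha_{1,n},\alpha_0)$ into Lemma \ref{prop2rmds} with $k_j=0$. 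What this buys you is a uniform treatment of the degenerate case ($c_0=0$ when $\gamma_p\sim\pm\gamma_q$) and an explicit isolation of the dihedral factor; what it costs you is that the statement of Lemma \ref{prop2rmds} alone no longer suffices --- multiplying its generic output $\mu v^l(v-v^{-1})^{m-1}\prod_{k=1}^m c_{h_k}$ by the extra factor $c_{b-a}$ would give $m+1$ Gaussian factors against only $(v-v^{-1})^{m-1}$, which is not of the required shape. So you genuinely need your refined claim that for $k_j=0$ the exponent of $v-v^{-1}$ equals the number of Gaussian factors; this is true (the proof of Lemma \ref{prop2rmds} produces $\rho c_{k_n+1}$ times an output of Corollary \ref{coro2}, and $c_{k_n+1}=c_1=1$ here), but it requires reopening that proof rather than citing its statement, which the paper's direct route avoids. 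Two smaller remarks: the "main obstacle" you anticipate is a non-issue for the paper's argument, since conjugation there is by the full group $W$ rather than by $W_{1,n}$ or $W_0$; and your matching of $\alpha'_{i,j}$ with $\alpha_0$ needs no detour through $v=1$, since the paper's displayed formula $\alpha'_{i,j}=s_{\alpha_{j+1}}\cdots s_{\alpha_n}s_{\alpha_{i-1}}\cdots s_{\alpha_1}(\alpha_0)$ exhibits an explicit $u\in W_0$ with $u^{-1}(\alpha'_{i,j})=\alpha_0$ and $u^{-1}(\alpha_{i,j})=\alpha_{1,n}$ exactly.
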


\begin{proof} Using \eqref{reduceqn}, we may without loss of generality replace the  the roots $\gamma_{i}$ by their conjugates by some $w\in W$, so one of the two roots in $\Phi_{i,j}$ is an associate of $\alpha_0$. Then by Lemma \ref{imdr}, $(i,j)=(1,n)$,  both roots lie in $\Phi_{1,n}$ and the other one must be  associate, up to sign, to  a root $c_{k}\alpha_0+c_{k+1}(\alpha_1+\ldots+\alpha_n)$ for some integer $k$. The Lemma now follows from Lemma \ref{prop2rmds}. \end{proof}

 We next state the last lemma needed for the inductive proof of Theorem  \ref{Adet}.  Recall that we identify roots with their coordinate vectors with respect to the (naturally ordered) simple roots.
\begin{lemma}\label{reducstep} Assume $n\geq 3$ and $1\leq i\leq n$. Let $\gamma\in \Phi\setminus \Phi_{i,i}$.
Regard $\gamma$ as a column vector with rows indexed by $0,\ldots, n$. Let $\beta$ be the column vector obtained by deleting the $i$-th row of $\gamma$. Then $\beta$ is the coordinate vector for some root of the
root system $\Phi_{(n-1,v)}$ of type $\widetilde A_{n-1}$ and parameter $v$.
\end{lemma}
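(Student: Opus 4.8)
We have a root of type $\widetilde{A}_n$, written in coordinates via Proposition \ref{rootform} as $\gamma = \eta v^m(c_{k+1}\alpha_{i',j'} + c_k\alpha'_{i',j'})$ for some $1\leq i'\leq j'\leq n$. By \eqref{coord} its coordinate vector is
$$\eta v^m(c_k, vc_k, \ldots, vc_k, c_{k+1}, \ldots, c_{k+1}, v^{-1}c_k, \ldots, v^{-1}c_k)^T,$$
where the $c_{k+1}$'s occupy rows $i'$ through $j'$, the entries above row $i'$ (rows $0$ through $i'-1$, with row $0$ holding $c_k$ and rows $1,\ldots,i'-1$ holding $vc_k$) are the ``left'' block, and the entries in rows $j'+1,\ldots,n$ hold $v^{-1}c_k$. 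Deleting row $i$ means deleting one coordinate and reindexing the remaining $n$ rows as $0,\ldots,n-1$. We must recognize the result as a legitimate coordinate vector in $\Phi_{(n-1,v)}$.

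**Plan.** The approach is a direct case analysis on the position of the deleted index $i$ relative to the block $[i',j']$, matching each resulting vector against the coordinate form \eqref{coord} for $\widetilde{A}_{n-1}$. First I would pull out the overall scalar $\eta v^m$, which is harmless since any scalar $\eta v^{m'}$ times a root is again a root (this is the associate relation $\sim$), so it suffices to treat the vector with $\eta=m=0$ and confirm the deleted vector is a root up to such a scalar. The exclusion $\gamma\notin\Phi_{i,i}$ is exactly what prevents the degenerate situation where the block is the single row $i$ (i.e. $i'=j'=i$ with the relevant structure), which would delete the sole $c_{k+1}$ entry and leave a vector that need not be a root; ruling this out is the reason for the hypothesis. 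I would organize the cases as: (a) $i$ lies strictly inside the block, $i'< i < j'$; (b) $i=i'$ or $i=j'$ but the block has length $\geq 2$; (c) $i$ lies in the left block ($1\leq i\leq i'-1$) or the right block ($j'+1\leq i\leq n$); and (d) $i=0$.

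**Why each case closes.** In case (a) deleting an interior $c_{k+1}$ entry simply shortens the run of $c_{k+1}$'s by one, producing the coordinate vector for the root with block $[i',j'-1]$ in $\widetilde{A}_{n-1}$ (after reindexing); the surrounding $vc_k$, $v^{-1}c_k$ structure is unchanged, so this matches \eqref{coord}. Cases (b) and (c) are similar bookkeeping: deleting a $vc_k$ or $v^{-1}c_k$ entry from the left or right block shrinks $n$ to $n-1$ while preserving the pattern ``$c_k$ in row $0$, a run of $vc_k$, a run of $c_{k+1}$, a run of $v^{-1}c_k$'', and one checks the surviving entry counts still describe indices $1\leq i''\leq j''\leq n-1$. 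The only subtlety is case (d), deleting row $0$, which removes the distinguished entry $c_k$; here I would use the symmetry of the $\widetilde{A}_n$ diagram (equivalently, apply a suitable element of $W$ via \eqref{reduceqn}, or the explicit root formula) to first move $\gamma$ to a root whose block does not meet row $0$, reducing to one of the earlier cases. The main obstacle is purely organizational rather than deep: keeping the reindexing of rows consistent across the cases and verifying that the block-length constraints translate correctly into the inequalities $1\leq i''\leq j''\leq n-1$ defining roots of $\Phi_{(n-1,v)}$, while making sure the hypothesis $\gamma\notin\Phi_{i,i}$ is invoked precisely where needed to avoid deleting the last $c_{k+1}$ entry and to keep $c_{k+1}\neq 0$ in the reduced pattern.
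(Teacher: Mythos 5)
Your argument is correct and is essentially the paper's own proof: delete the $i$-th entry of the coordinate vector \eqref{coord} and observe that the result still matches the $\widetilde A_{n-1}$ pattern of \eqref{coord} except when the sole $c_{k+1}$ entry sits in row $i$, which by Lemma \ref{imdr} happens exactly when $\gamma\in\Phi_{i,i}$. Your case (d) is vacuous, since the hypothesis $1\leq i\leq n$ already excludes deleting row $0$, so no symmetry argument is needed there.
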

\begin{proof}
Deletion of the entry from row $i$  of the coordinate vector for $\gamma\in \Phi$ as in \eqref{coord} gives a coordinate vector for a root in type $\widetilde A_{n-1}$ unless there is a single
entry $c_{k+1}$ and it occurs in row $i$, which by  Lemma \ref{imdr}, occurs only when  $\gamma\in \Phi_{i,i}$.
\end{proof}
We now state and prove the following more precise version of Theorem \ref{Adet}
\begin{theorem}\label{Adetprec}
Suppose $\gamma_0, \gamma_1, \ldots, \gamma_{n}$ are $n+1$ roots of $W$ of type $\widetilde{A}_{n}, n\geq 2$. Regard $\gamma_i$ as coordinate vectors as above. Then $d:=\det(\gamma_0,\gamma_1,\ldots,\gamma_n)$ is either $0$ or of the form
$$\mu v^l(v-v^{-1})^{m-1}\Pi_{k=1}^mc_{h_k}$$
where $\mu\in \{1,-1\}$, $m\geq 1$ and $h_{1},\ldots, h_{m}\in \mathbb{Z}_{+}$. Further,
if $d\neq 0$ and  $n=2$, then $m=1$, while  if $n\geq 3$, there exist roots $\gamma_{0},\ldots, \gamma_{n}$ for which $d $ is of the  form displayed above  with $m>1$. \end{theorem}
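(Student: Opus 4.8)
The plan is to prove the displayed form by induction on $n$, and to read the two refinements off the structure of that induction (for $m=1$ when $n=2$) and off an explicit family of roots (for the existence of $m>1$ when $n\ge 3$). For the induction I would first normalize the $\gamma_i$ using \eqref{reduceqn}. Every edge of the Coxeter graph of $\widetilde A_n$ carries the odd label $3$, so all simple reflections of $W$ are conjugate; hence every reflection is conjugate to $s_{\alpha_1}$ and every root of $\Phi$ is $W$-conjugate, up to sign and a power of $v$, to $\alpha_1$. After permuting columns and applying \eqref{reduceqn} I may therefore assume $\gamma_0=\alpha_1$, whose coordinate vector is the standard basis vector $e_1$. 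Now I distinguish two cases. If some $\gamma_q$ with $q\ge 1$ lies in $\Phi_{1,1}$, then $\gamma_0,\gamma_q\in\Phi_{1,1}$ and Lemma \ref{partone} already yields the desired form. Otherwise no $\gamma_q$ ($q\ge1$) lies in $\Phi_{1,1}$; expanding $\det(\gamma_0,\dots,\gamma_n)$ along the column $e_1$ gives $\pm$ the determinant of the matrix whose columns are $\gamma_1,\dots,\gamma_n$ with their $1$-st entry deleted, and by Lemma \ref{reducstep} these $n$ columns are coordinate vectors of roots of $\Phi_{(n-1,v)}$, so the induction hypothesis for $\widetilde A_{n-1}$ applies. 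The induction bottoms out at $\widetilde A_1$, where a $2\times 2$ determinant of roots $\eta_t v^{m_t}(c_{k_t}\alpha_0+c_{k_t+1}\alpha_1)$ equals $\eta_1\eta_2 v^{m_1+m_2}(c_{k_1}c_{k_2+1}-c_{k_1+1}c_{k_2})$, and a direct computation (as for Lemma \ref{identitygauss}) gives $c_{k_1}c_{k_2+1}-c_{k_1+1}c_{k_2}=c_{k_1-k_2}$, which is of the asserted form with $m=1$.

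For the claim that $n=2$ forces $m=1$ I would track the number $m$ of Gaussian-integer factors through this reduction. In the second case one lands directly on a single $\widetilde A_1$ determinant, contributing one factor $c_{k_1-k_2}$ and hence $m=1$. In the first case, Lemma \ref{partone} reduces to Lemma \ref{prop2rmds}, whose proof expresses the determinant as $\rho\,c_{k_2+1}\det(D)$ with $D$ of size $(n-1)\times(n-1)=1\times 1$; such a $D$ is a single monomial in $v$, to which Corollary \ref{coro2} contributes no factor $(v-v^{-1})$, so again $m=1$. Thus $m=1$ whenever $n=2$ and $d\ne 0$.

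For $n\ge 3$ I would exhibit roots realizing $m=2$ by the following criterion: if $d\ne 0$ and the form holds with $m=1$, then $d=\mu v^l c_{h_1}$ with $h_1\in\mathbb Z_+$, so $d|_{v=1}=\mu h_1\ne 0$; hence any roots with $d\not\equiv 0$ but $d|_{v=1}=0$ must have $m\ge 2$. Take $\gamma_0=\alpha_{1,2}$ and $\gamma_2=\alpha_{2,3}$, let $\gamma_1=c_2\alpha_{1,2}+c_1\alpha_{1,2}'$ and $\gamma_3=c_2\alpha_{2,3}+c_1\alpha_{2,3}'$ be the $k=1$ roots of $\Phi_{1,2}$ and $\Phi_{2,3}$, and set $\gamma_i=\alpha_i$ for $4\le i\le n$. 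By \eqref{coord} the columns $\gamma_4,\dots,\gamma_n$ are the basis vectors $e_4,\dots,e_n$, so Laplace expansion along them deletes rows $4,\dots,n$ and leaves the $4\times 4$ determinant
\begin{equation*}
\det\begin{pmatrix} 0 & 1 & 0 & 1 \\ 1 & v+v^{-1} & 0 & v \\ 1 & v+v^{-1} & 1 & v+v^{-1} \\ 0 & v^{-1} & 1 & v+v^{-1}\end{pmatrix}=-(v-v^{-1}).
\end{equation*}
Thus $d=\pm(v-v^{-1})$, which has the stated form with $m=2$ and satisfies $d|_{v=1}=0$, proving the existence claim for every $n\ge 3$.

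The step I expect to be the main obstacle is this existence construction. The factor $(v-v^{-1})$ never arises within a single maximal dihedral subsystem, since positive roots have nonnegative coordinates and the antisymmetry producing $(v-v^{-1})$ only emerges after row operations that mix two \emph{different} subsystems; one must therefore locate a configuration — here two roots in each of the adjacent subsystems $\Phi_{1,2}$ and $\Phi_{2,3}$ — whose determinant genuinely degenerates at $v=1$. A secondary point needing care is the normalization $\gamma_0=\alpha_1$, which uses that all simple reflections of $\widetilde A_n$ are conjugate (the parameter $v$ breaks the cyclic symmetry of the diagram, so this conjugacy cannot be replaced by a diagram automorphism).
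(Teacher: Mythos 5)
Your argument follows essentially the same route as the paper's proof: normalize one root to a simple root via \eqref{reduceqn}, dispose of the case of two roots in a common maximal dihedral subsystem via Lemma \ref{partone} (reading the $n=2$, $m=1$ refinement off the proof of Lemma \ref{prop2rmds}), and otherwise expand the determinant along the normalized column and induct using Lemma \ref{reducstep}. Two minor points of divergence are worth recording. First, you run the induction down to a rank-two base case, whereas the paper stops at $n=2$ with explicit $3\times 3$ computations; your step from $n=2$ to the $2\times 2$ determinant invokes Lemma \ref{reducstep}, which is stated only for $n\geq 3$, so the analogous fact for $n=2$ (deleting row $1$ from a root of $\Phi_{2,2}$ or $\Phi_{1,2}$ yields a vector of the form $\eta v^m(c_k,c_{k+1})^T$) must be checked separately --- it is true, and it is exactly what your base-case computation uses, but as written the citation is out of scope. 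Second, your example realizing $m=2$ (a pair of roots in each of the adjacent subsystems $\Phi_{1,2}$ and $\Phi_{2,3}$, padded by simple roots) differs from the paper's (which uses $\Phi_{1,1}$ and $\Phi_{n,n}$ together with simple roots), but both evaluate to $d=\pm(v-v^{-1})$ and both work for every $n\geq 3$; your observation that $m=1$ would force $d|_{v=1}=\mu h_1\neq 0$ is a clean certification that $m\geq 2$ is genuinely required, a point the paper leaves implicit. The remaining details (the conjugacy of all reflections to $s_1$ via the odd Coxeter graph, the $2\times 2$ identity $c_{k_1}c_{k_2+1}-c_{k_1+1}c_{k_2}=c_{k_1-k_2}$, and the $4\times 4$ determinant) all check out.
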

\begin{proof}
We prove the theorem by induction. Let $n=2$. If there are two roots whose corresponding reflections are contained in the same maximal dihedral reflection subgroup then Lemma \ref{partone} ensures the determinant has the desired form. But we still have to show that the power $m-1$ of $(v-v^{-1})$ is zero. Using \eqref{reduceqn}, we may assume the first root  is $\alpha_0\in \Phi_{1,2}$ and the second  is $c_k\alpha_0+c_{k+1}(\alpha_1+\alpha_2)\in \Phi_{1,2}$.
If the third is in $\Phi_{1,2}$, then $d=0$. It suffices to show  the determinants of the following matrices are of the required form:
$$A=\left(
  \begin{array}{ccc}
    1 & c_k & c_{k'} \\
    0 & c_{k+1} & vc_{k'} \\
    0 & c_{k+1} & c_{k'+1} \\
  \end{array}
\right),\quad  B=\left(
  \begin{array}{ccc}
    1 & c_k & c_{k'} \\
    0 & c_{k+1} & c_{k'+1} \\
    0 & c_{k+1} &  v^{-1}c_{k'}\\
  \end{array}
\right)$$
For $A$, note that $vc_{k'}-c_{k'+1}=-v^{-k'}$ so
$\det(A)=v^{-k'}c_{k'+1}$. For $B$, note that $v^{-1}c_{k'}-c_{k'+1}=-v^{k'}$ and $\det(B)=-v^{k'}c_{k+1}$.

Now assume, still with $n=2$, that there are no two roots with corresponding reflections in the same maximal dihedral reflection subgroup. By conjugation one can assume one of the roots is $\alpha_0.$
Then by \eqref{reduceqn}, $\det(\gamma_0,\gamma_1,\gamma_2)$ is the product of a power of $v$, $\pm1$ and the determinant of
$$\left(
    \begin{array}{ccc}
      1 & c_k & c_{k'} \\
      0 & c_{k+1} & vc_{k'} \\
      0 & v^{-1}c_k & c_{k'+1} \\
    \end{array}
  \right).
$$ By Lemma \ref{identitygauss}, it is of the form $\mu v^lc_{h_d}$, proving this case.

Next,  we consider the general case ($n\geq 3$). To show $d$ is of the required form,  we may assume by Lemma \ref{partone}  that there are no two roots whose reflections are contained in the same maximal dihedral reflection subgroup. Conjugating  $\gamma_n$ suitably if necessary, we may  assume using  \eqref{reduceqn} that  $\gamma_{n}$  is equal to a simple root $\alpha_{i}$ where $1\leq i\leq n$. By the assumption,
 none of    $\gamma_{0},\ldots, \gamma_{n-1}$  is in $\Phi_{i,i}$. Expanding the  determinant $\det(\gamma_{0},\ldots, \gamma_{n})$ down the
 $n$-th column   shows $d=\pm\det(\gamma'_{0},\ldots, \gamma'_{n-1})$ where $\gamma'_{j}$ is obtained by deleting the $i$-th row of $\gamma_{j}$. By   Lemma \ref{reducstep},  each $\gamma'_{i}$ for $i=0,\ldots, n-1$ is the coordinate vector with respect to the (ordered) simple  roots of  a root of the root system $\Phi'$ of type $A_{n-1}$ with parameter $v$.
Hence $d$ has the desired form by induction.

It remains to give an example to show $m>1$ can occur  in case $n\geq 4$.
  Let $\gamma_{0}:=c_{2}\alpha_{1,1}+c_{1}\alpha_{1,1}'=\alpha_{0}+(v+v^{-1})\alpha_{1}+
    v^{-1}(\alpha_{2}+\ldots+\alpha_{n})$,
     $\gamma_{i}:=\alpha_{i}$ for $i=1,\ldots, n-2$,
      $\gamma_{n-1}:=c_{2}\alpha_{n,n}+\alpha_{n,n}'=
      \alpha_{0}+v\alpha_{1}+\ldots +v\alpha_{n-1}+(v+v^{-1})
      \alpha_{n}$ and $\gamma_{n}:=\alpha_{n}$. A trivial calculation shows that $d=v-v^{-1}$ in this case.
   \end{proof}

\begin{theorem}
If $W$ is a  finite Coxeter group or an irreducible affine Weyl group which is not of type $\widetilde{A}_n, n\geq 3$, then all of its realizable root systems give the same oriented matroid structure. If $W$ is of type $\widetilde{A}_n, n\geq 3$, there are three oriented matroid structures from its realizable root systems.
\end{theorem}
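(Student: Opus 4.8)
The plan is to treat the uniqueness assertion and the count of three structures separately, assembling the uniqueness part from results already in hand and reserving the bulk of the work for type $\widetilde A_n$, $n \geq 3$. For uniqueness I would first invoke Corollary \ref{nonA}, which already covers every finite Coxeter group and every irreducible affine Weyl group other than those of type $\widetilde A_n$; it then remains only to rule out non-uniqueness for $\widetilde A_1$ and $\widetilde A_2$. The group $\widetilde A_1$ is the infinite dihedral group, of rank two, so the rank-two observation following Lemma \ref{rescale} yields a single structure. For $\widetilde A_2$ I would argue from Theorem \ref{Adetprec}: when $n = 2$ and $d := \det(\gamma_0,\gamma_1,\gamma_2) \neq 0$ one has $m = 1$, so $d = \mu v^l c_{h_1}$ with $\mu \in \{\pm 1\}$; since $v^l > 0$ and $c_h > 0$ for all $h \geq 1$ and all $v > 0$, the chirotope value $\sgn(d) = \mu$ is independent of $v$, and the transferred oriented matroid is therefore the same for every realized root system. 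This completes the uniqueness assertion.

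For type $\widetilde A_n$ with $n \geq 3$, recall that every realized root system is, after rescaling, one of the standard family $\Phi_{(n,v)}$ with $v \in \mathbb{R}_{>0}$, each value of $v$ occurring. For any ordered $(n+1)$-tuple of elements of $T \times \{\pm 1\}$, choosing representative roots and applying Theorem \ref{Adetprec} expresses $\det$ as a Laurent polynomial $\mu v^l (v - v^{-1})^{m-1} \prod_{k=1}^m c_{h_k}$ whose combinatorial data $\mu, l, m, h_k$ are independent of the numerical value of $v$. Using $v^l > 0$ and $c_{h_k} > 0$, the chirotope value is $\sgn(\det) = \mu \cdot \sgn(v - v^{-1})^{m-1}$, which depends on $v$ only through $\sgn(v - v^{-1}) \in \{+, 0, -\}$. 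Hence the chirotope, and so the transferred oriented matroid, is constant on each of the three ranges $v > 1$, $v = 1$, $0 < v < 1$, giving at most three structures.

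The final and hardest step is to show these three structures are pairwise distinct, i.e.\ that their chirotopes are not related by a global sign. I would separate $v = 1$ from $v \neq 1$ by comparing supports: a tuple with $m > 1$ --- which exists for $n \geq 3$ by the example in Theorem \ref{Adetprec} --- is dependent at $v = 1$ but independent for $v \neq 1$, so the two underlying matroids already differ and no global sign can identify the oriented matroids. To separate $v > 1$ from $0 < v < 1$, note that here the chirotopes share the same support and their ratio on it is $(-1)^{m-1}$; exhibiting one non-degenerate tuple with $m$ odd (the $n+1$ simple roots, where $\det = 1$ and $m = 1$) and one with $m$ even (the example with $\det = v - v^{-1}$ and $m = 2$) shows this ratio is not globally constant, so no global sign relates the two chirotopes. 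Together with the ``at most three'' bound this gives exactly three structures. The main obstacle is precisely this last separation of the two non-degenerate regimes: it requires non-degenerate configurations of both parities of $m$, and producing an even-$m$ configuration is the delicate input furnished by Theorem \ref{Adetprec}.
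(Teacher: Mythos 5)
Your proposal is correct and follows essentially the same route as the paper: Corollary \ref{nonA} for the non-$\widetilde{A}_n$ cases, the rank-two remark for $\widetilde{A}_1$, and Theorem \ref{Adetprec} for $\widetilde{A}_n$, $n\geq 2$, with the sign of the determinant depending only on $\sgn(v-1)$ through the factor $(v-v^{-1})^{m-1}$. You in fact spell out more explicitly than the paper does why the three structures for $n\geq 3$ are pairwise distinct (comparison of supports for $v=1$ versus $v\neq 1$, and the coexistence of non-degenerate tuples with $m$ odd and $m$ even to rule out a global sign), which is exactly the role of the $m>1$ example in Theorem \ref{Adetprec}.
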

\begin{proof}  For finite types and affine types
except $\widetilde{A}_{n}$ where $n\geq 1$, this follows from Corollary \ref{nonA}.
It has already been remarked that  the result is well known for $W$ of rank at most two, in particular for $\widetilde{A_{1}}$. For $\widetilde{A}_{n}$ where $n\geq 2$, the simple roots of $T\times \set{\pm 1}$ are $\alpha'_{j}:=(s_{j},1)$ for $j=0,\ldots, n$. We consider the
chirotope $\chi_{(n,v)}$ on  $T\times \set{\pm 1}$ obtained by transfer  from $\Phi_{(n,v)}$, with sign normalized by $\chi_{(n,v)}(\alpha'_{0},\ldots, \alpha'_{n})=+$. The values of the chirotope $\chi_{(n,v)}$ in general  are given by
$\chi_{(n,v)}(w_{0}(\alpha'_{i_{0}}),\ldots, w_{n}(\alpha'_{i_{n}}))=\sgn(\det(w_{0}(\alpha_{i_{0}}),\ldots, w_{n}(\alpha_{i_{n}})))$
for $w_{j}\in W$ and $i_{j}\in \set{0,\ldots, n}$ for $j=0,\ldots, n$. Theorem \ref{Adetprec} ensures that
$\chi_{(2,v)}$ is independent of $v$, and that for $n\geq 3$ and  $v,v'\in \mathbb{R}_{>0}$, one has  $\chi_{(n,v)}=\chi_{(n,v')}$ if and only if $\sgn(v-1)=\sgn(v'-1)$.
\end{proof}

\section{Rank 3 Coxeter Systems}
In this section we show that for a given rank 3  Coxeter system all oriented matroid structures from  realized root systems are the same using an argument involving homotopies of root systems. This has already been briefly sketched in a remark in \cite{DyerWeakOrder}. We here elaborate the proof.

\begin{lemma}\label{pathconnect}
Consider a NGCM as a vector in $\mathbb{R}^{n\times n}$. Then the space of all NGCMs in $\mathbb{R}^{n\times n}$ associated  to a fixed   Coxeter system of rank $n$ is path connected.
\end{lemma}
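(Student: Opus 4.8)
The plan is to exhibit the space of NGCMs with fixed Coxeter matrix $(m_{\alpha,\beta})_{\alpha,\beta\in S}$ as a product of path-connected factors, one for each unordered pair of distinct simple reflections. First I would observe that the defining conditions of a NGCM recalled above constrain the diagonal entries to equal $2$ and, for $\alpha\neq\beta$, couple only the two entries $c_{\alpha,\beta}$ and $c_{\beta,\alpha}$ to each other; no condition relates entries belonging to two different unordered pairs $\{\alpha,\beta\}$ and $\{\alpha',\beta'\}$. Hence, identifying a NGCM with the tuple of its off-diagonal pairs, the space of NGCMs is homeomorphic (via the affine coordinate map reassembling the matrix with $2$'s on the diagonal) to the product $\prod_{\{\alpha,\beta\}}C_{\alpha,\beta}$, where $C_{\alpha,\beta}\subseteq\mathbb{R}^2$ is the set of admissible values of $(c_{\alpha,\beta},c_{\beta,\alpha})$. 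Since a product of nonempty path-connected spaces is path connected, it suffices to show each $C_{\alpha,\beta}$ is path connected and nonempty.

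Next I would analyze $C_{\alpha,\beta}$ according to $m:=m_{\alpha,\beta}$. When $m=2$, the conditions force $c_{\alpha,\beta}=c_{\beta,\alpha}=0$, so $C_{\alpha,\beta}$ is a single point. When $2<m<\infty$, the set $C_{\alpha,\beta}$ equals $\{(x,y):x,y<0,\ xy=4\cos^2(\pi/m)\}$, one branch of a hyperbola lying in the open third quadrant; as the graph $y=4\cos^2(\pi/m)/x$ over $x\in(-\infty,0)$ it is homeomorphic to an interval, hence path connected. The remaining case $m=\infty$ is the one requiring the most care.

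For $m=\infty$ the admissible set is $C_{\alpha,\beta}=\{(x,y):x<0,\ y<0,\ xy\geq 4\}$. To see this is path connected I would apply the change of variables $(x,y)\mapsto(u,v)=(-x,-y)$, carrying $C_{\alpha,\beta}$ homeomorphically onto $\{(u,v):u>0,\ v\geq 4/u\}$. The latter is exactly the region above the graph of the convex function $u\mapsto 4/u$ on $(0,\infty)$, i.e. its epigraph, which is a convex subset of $\mathbb{R}^2$ and in particular path connected. Each factor is moreover nonempty: one may take $(-2\cos(\pi/m),-2\cos(\pi/m))$ when $m$ is finite and $(-2,-2)$ when $m=\infty$.

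Assembling these, I conclude that each $C_{\alpha,\beta}$ is path connected, hence so is the product, finishing the proof. Concretely, given two NGCMs $A,A'$ one obtains a path between them by connecting, simultaneously and independently over all $\{\alpha,\beta\}$, the pair of entries $(c_{\alpha,\beta},c_{\beta,\alpha})$ of $A$ to the corresponding pair of $A'$ within $C_{\alpha,\beta}$. The only nonroutine point is the case $m=\infty$, where the admissible region is genuinely two-dimensional; the convexity observation after the substitution $(x,y)\mapsto(-x,-y)$ is what makes its connectedness transparent.
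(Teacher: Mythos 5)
Your proof is correct, but it takes a genuinely different route from the paper's. The paper writes down a single explicit path: for $\alpha\neq\beta$ it sets $c^{(t)}_{\alpha,\beta}=-\vert c'_{\alpha,\beta}\vert^{t}\vert c_{\alpha,\beta}\vert^{1-t}$ (with $0^{t}=0$), i.e.\ a geometric interpolation of the off-diagonal entries; the point is that this formula preserves all the defining constraints uniformly, since $c^{(t)}_{\alpha,\beta}c^{(t)}_{\beta,\alpha}=(c'_{\alpha,\beta}c'_{\beta,\alpha})^{t}(c_{\alpha,\beta}c_{\beta,\alpha})^{1-t}$, which stays equal to $4\cos^{2}(\pi/m_{\alpha,\beta})$ in the finite case and stays $\geq 4^{t}\cdot 4^{1-t}=4$ in the infinite case. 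You instead decompose the space of NGCMs as a product $\prod_{\{\alpha,\beta\}}C_{\alpha,\beta}$ over unordered pairs (legitimate, since the axioms couple only $c_{\alpha,\beta}$ with $c_{\beta,\alpha}$) and verify path-connectedness factor by factor: a point for $m=2$, a hyperbola branch for $2<m<\infty$, and for $m=\infty$ the convex epigraph of $u\mapsto 4/u$ after negating coordinates. Your case analysis is complete and each step checks out; what it buys is a clearer picture of the topology of the parameter space (each factor is in fact contractible, so the whole space is), at the cost of being less compact. The paper's formula buys a one-line path that needs no case distinction and would apply verbatim without the finite-rank hypothesis. Note also that your straight-line path inside the $m=\infty$ factor and the paper's log-linear path are genuinely different curves, but of course either suffices.
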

\begin{proof}

Let $A=(c_{\alpha,\beta})_{\alpha,\beta=1}^{n}$ and $B=
(c'_{\alpha,\beta})_{\alpha,\beta=1}^{n}$ be two NGCMs  in $\mathbb{R}^{n\times n}$ corresponding to the same Coxeter system $(W,S)$ of rank $n$.
Define a map $\phi\colon [0,1]\to \mathbb{R}^{n\times n}$ as follows: for $t\in [0,1]$, $\phi(t)=(c^{(t)}_{\alpha,\beta})_{\alpha,\beta=1}^{n}$ where
$c^{(t)}_{\alpha,\beta}=2$  if $\alpha=\beta$ and otherwise
\begin{equation*}
c^{(t)}_{\alpha,\beta}=-\vert c'_{\alpha,\beta}\vert ^{t}
\vert c_{\alpha,\beta}\vert ^{1-t}
\end{equation*}
and we interpret $0^{t}=0$ for all $t\in [0,1]$.
It is easy to see from above that $\phi$ is continuous, $\phi(t)$  is a NGCM
for all $t\in [0,1]$,  and $\phi(0)=A$, $\phi(1)=B$.
 \end{proof}

\begin{theorem}\label{rank3homotopy}
Let $(W,S)$ be a rank $3$ Coxeter system. Then all oriented matroid structures from realized root systems  of $(W,S)$ are equal.
\end{theorem}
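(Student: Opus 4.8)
The plan is to combine Lemma \ref{pathconnect} with a continuity (homotopy) argument at the level of the chirotope. By the reduction in Subsection \ref{ld}, together with the fact that a rank $3$ Coxeter system admits no nonzero subspace $U$ of the radical meeting $\cone(\Pi)$ only at $0$ (the radical being $0$ in the finite and strictly indefinite cases, and lying inside $\cone(\Pi)$ in the affine case, exactly as recorded at the end of Section \ref{s3}), every realized root system of $(W,S)$ has linearly independent simple roots and simple coroots. I may therefore fix the ambient space $V=\mathbb{R}^3$ with basis $\Pi=\{\alpha_1,\alpha_2,\alpha_3\}$ and encode a realization by its NGCM. Given two realizations $A,B$, Lemma \ref{pathconnect} supplies a continuous path $\phi\colon[0,1]\to\mathbb{R}^{3\times3}$ of NGCMs with $\phi(0)=A$ and $\phi(1)=B$. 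Since each simple reflection acts by $v\mapsto v-(v,\alpha_i^\vee)\alpha_i$, its matrix in the basis $\Pi$ is polynomial in the entries of $\phi(t)$; hence for any fixed $w\in W$ and index $j$ the coordinate vector of the root $w(\alpha_j)$ depends continuously (indeed polynomially) on $t$.

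By the chirotope characterization recalled earlier, the transferred oriented matroid on $T\times\{\pm1\}$ is determined, up to a global sign, by the map sending a triple of abstract roots represented by $w_0(\alpha_{j_0}),w_1(\alpha_{j_1}),w_2(\alpha_{j_2})$ to $\sgn\det\bigl(w_0(\alpha_{j_0}),w_1(\alpha_{j_1}),w_2(\alpha_{j_2})\bigr)$. Fixing such a triple and writing $d(t)$ for the corresponding determinant along $\phi$, the function $d$ is continuous. The whole theorem reduces to the single claim that $\sgn d(t)$ is independent of $t$: granting this, the chirotope (with sign normalized as in the type $\widetilde A$ argument) is constant along $\phi$, so $A$ and $B$ give the same oriented matroid, and since any two NGCMs are joined by such a path the structure is unique.

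To prove $\sgn d(t)$ is constant I would show that the vanishing locus of $d$ is all of $[0,1]$ or empty; as $[0,1]$ is connected and $d$ is continuous, constancy of the sign then follows in the nonvanishing case. Thus the heart of the matter is that linear dependence of a triple of roots is a realization-independent condition. Three roots are linearly dependent exactly when they lie in a common $2$-plane $P\subseteq V$; the roots of $\Phi$ lying in $P$ are permuted by each $s_\gamma$ with $\gamma\in P$, so they form the root system of the reflection subgroup $W_P:=\langle s_\gamma\mid\gamma\in\Phi\cap P\rangle$, whose roots span $P$. The key rank $3$ input is that $W_P$ then has rank at most $2$: a rank $3$ reflection subgroup would carry a realized root system spanning only two dimensions, i.e.\ with linearly dependent simple roots, which by the dichotomy of the first paragraph (radical zero or inside the cone) is impossible. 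Consequently the three reflections $s_{\gamma_i}$ lie in a common dihedral reflection subgroup, a purely combinatorial property of $(W,S)$ that does not see the realization; compare the explicit description of such planes and maximal dihedral subgroups in Lemma \ref{imdr}. Hence the triple is dependent for some $t$ if and only if it is dependent for all $t$, which is exactly what is needed.

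The step I expect to be the genuine obstacle is this last one: verifying, uniformly over all rank $3$ Coxeter systems (including reducible ones and indefinite types), that no rank $3$ reflection subgroup can confine its roots to a plane, equivalently that the ``radical zero or in the cone'' dichotomy persists for all reflection subgroups that arise. This is precisely the feature that fails in higher rank: for $\widetilde A_n$ with $n\ge3$ the determinant formula of Theorem \ref{Adetprec} contains factors $c_{h_k}$ and powers of $(v-v^{-1})$ with $m>1$, producing genuine transversal sign changes at $v=1$, whereas the $n=2$ case of that theorem ($m=1$, so $d=\mu v^l c_{h_1}$ with $c_{h_1}>0$) exhibits exactly the sign-definiteness that the argument above extracts abstractly for every rank $3$ system.
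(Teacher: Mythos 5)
Your proposal follows essentially the same route as the paper's proof: linear independence of the simple roots reduces each realization to its NGCM, Lemma \ref{pathconnect} supplies a homotopy of NGCMs, and the sign of each $3\times 3$ determinant is constant along the path because its vanishing is equivalent to the realization-independent combinatorial condition that two of the abstract roots coincide up to sign or that the three reflections lie in a common (maximal) dihedral reflection subgroup. The only notable difference is in the preliminary step: the paper justifies linear independence of the simple roots (and, implicitly, that no reflection subgroup of rank $\geq 3$ can have its roots confined to a plane) by the direct observation that a realized root system spanning a subspace of dimension $d\leq 2$ forces the rank to equal $d$, whereas you appeal to a ``radical zero or inside the cone'' dichotomy that the paper only records for finite and affine Weyl groups -- the dimension-count argument is the cleaner way to close the step you flag as the genuine obstacle.
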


\begin{proof} If a realized  root system of a Coxeter system $(W',S')$ spans a subspace of dimension $d\leq 2$, then $\vert S'\vert =d$. Hence the simple roots (and coroots) of any realized root system of $(W,S)$ are linearly independent, and the realized root system is completely determined by its NGCM.
Let $A,B$ be two NGCMs determining two realized root systems of $(W,S)$.  Then by Lemma \ref{pathconnect} we have a continuous map $\theta:[0,1]\rightarrow \mathbb{R}^{3\times 3}$ such that each $\theta(t)$ is a NGCM of $(W,S)$ and $\theta(0)=A$ and $\theta(1)=B.$ Let $\gamma_1,\gamma_2,\gamma_3\in \Pi$, $w_1,w_2,w_3\in W.$ Let $(w_i(\gamma_i))_t, 1\leq i\leq 3$ be the root $w_i(\alpha_i)$ in the realized root system having NGCM $\phi(t).$ Now we show that if $\det((w_1(\gamma_1))_0,(w_2(\gamma_2))_0,(w_3(\gamma_3))_0)=0$ then $\det((w_1(\gamma_1))_t,(w_2(\gamma_2))_t,(w_3(\gamma_3))_t)=0$ for any $0\leq t\leq 1.$ Note that the assumption implies that $(w_i(\gamma_i))_0, 1\leq i\leq 3$ are linearly dependent. This is equivalent to the condition that either we have some $w_i(\alpha_i)_0=kw_j(\alpha_j)_0, i,j\in \{1,2,3\}$ or $(w_i(\gamma_i))_0, 1\leq i\leq 3$ span a plane. But the former is equivalent to that $\pi(w_i(\alpha_i)_0)(=\pi(w_i(\alpha_i)_t))$ is $\pi(w_j(\alpha_j)_0)(=\pi(w_j(\alpha_j)_t))$ or its negative, which is independent of the realized root system.
The latter is equivalent to the reflection in $\pi(w_i(\alpha_i)_0)(=\pi(w_i(\alpha_i)_t)), 1\leq i\leq 3$ being  contained in a maximal dihedral reflection subgroup. This property is again independent of the realized root system.
Hence we established the claim. Consequently as a continuous function in $t$, $\det((w_1(\gamma_1))_t,(w_2(\gamma_2))_t,(w_3(\gamma_3))_t)$ is either always positive, always zero or always negative.
\end{proof}

\section{Further examples and remarks}

\subsection{Oriented matroid Structures from  symmetric NGCMs}
In this subsection, we give an example showing that for a fixed Coxeter system, even for its  realized root systems  with symmetric NGCMs and linearly independent simple roots,
the corresponding  oriented matroid structures on the abstract root system may differ.
Fix a Coxeter system $(W,S)$ with $S=\set{s_{1},s_{2},s_{3},s_{4}}$ and  Coxeter graph
\begin{equation*}
\xymatrix{
{s_{1}}\ar@{-}[r]^{m}\ar@{-}[d]_{n}&{s_{2}}\ar@{-}[d]^{\infty}\\
{s_{3}}\ar@{-}[r]_{\infty}&{s_{4}}
}
\end{equation*}
where $m,n\in \mathbb{N}_{\geq 3}\cup\set{\infty}$.

We define  realized root system datum $D$ as follows:  $D=(V,V^{\vee},(-,-),\Pi,\Pi^{\vee},\iota)$ where $\Pi=\set{\alpha_{i}\mid i=1,\ldots,4}$
and the associated
NGCM $C=(c_{i,j})_{i,j=1}^{4}$, where $c_{i,j}=(\alpha_{i},\alpha_{j}^{\vee})$, is given by
\begin{equation*}
C=\left(\begin{array}{rrrr}2&-a&-b&0\\
-a&2&0&-c\\
-b&0&2&-d\\
0&-c&-d&2\end{array}\right).
\end{equation*}
where $a=2\cos\frac{\pi}{m}$, $b=2\cos\frac{\pi}{n}$
and $c,d\geq 2$.

The oriented matroid structure of the realized root system is determined by $\chi: \Phi^4\rightarrow \{+,-,0\}, (\gamma_1,\gamma_2,\gamma_3,\gamma_4)\mapsto \sgn(\det(\gamma_1,\gamma_2,\gamma_3,\gamma_4))$ where $(\gamma_1,\gamma_2,\gamma_3,\gamma_4)$ is a $4\times 4$ matrix with $\gamma_i$ is regarded as the column vector with respect to the basis $\{\alpha_1,\alpha_2,\alpha_3,\alpha_4\}$.

One trivially has $\chi(\alpha_{1},\alpha_{2},\alpha_{3},\alpha_{4})=+$, so these chirotopes (for varying $c,d$) cannot differ by a sign.
One has $s_{\alpha_{1}}s_{\alpha_{4}}(\alpha_{2})=a\alpha_{1}+\alpha_{2}+c\alpha_{4}$ and $s_{\alpha_{1}}s_{\alpha_{4}}(\alpha_{3})=b\alpha_{1}+\alpha_{3}+d\alpha_{4}$.
Hence
\begin{equation*}
\chi(s_{\alpha _{1}}s_{\alpha _{4}}(\alpha_{2}), \alpha_{2},\alpha_{3}, s_{\alpha_{1}}s_{\alpha_{4}}(\alpha_{3}))
=\sgn\begin{vmatrix}
a&0&0&b\\
1&1&0&0\\
0&0&1&1\\
c&0&0&d\end{vmatrix}=\sgn(ad-bc). \end{equation*}
Since $a,b>0$ are fixed and $c,d\geq 2$ are arbitrary,
any value of the sign in $\set{+,-,0}$ is possible as $c,d$ vary,
so there are at least three possible chirotopes  $\chi$ arising this way which are distinct even up to sign.

\subsection{A 2-closure biclosed set which is not cone-biclosed}
Let $\Phi$ be a realized root system of Coxeter system $(W,S)$ with simple roots $\Pi$.
We introduce the 2-closure operator $c_{2,\Phi}$ on $\Phi$. A subset $\Gamma$ of $\Phi$ is said to be $c_{2,\Phi}$-closed if for any $\alpha,\beta\in \Gamma$, one has $\{a\alpha+b\beta|a,b\in \mathbb{R}_{\geq 0}\}\cap \Phi\subseteq  \Gamma$.
The $c_{2,\Phi}$-closure of a set is the intersection of all $c_{2,\Phi}$-closed sets containing it.

 We can transfer $c_{2,\Phi}$-closure to a closure operator $c_{2}$ on the abstract root system $T\times \{1,-1\}$ in the same way as we transferred matroid closure operators,  by setting
 $c_{2}(A)=\pi(c_{2,\Phi}(\pi^{-1}A))$ for $A\subseteq T\times \set{\pm 1}$.
  It can be shown  (using for instance \cite[Lemma 4.3(a)]{DyerWeakOrder} and \cite[1.35--1.36 and Proposition 3.1(c)]{rigidity})   that this closure operator $c_{2}$ on $T\times \{1,-1\}$ is independent of the realized root system $\Phi$.
It is studied for its role in the  description of the meet and join of weak order of a Coxeter group,  completion of weak order and its relation to reflection orders and their initial sections. See for example \cite{DyerWeakOrder} and \cite{bjornerbrenti}.

Generally, for any
closure operator $c$ on a set $X$ and any subset $Y$ of $X$,  we say
  that $Z\subseteq Y$ is $c$-biclosed (in $Y$) if $Z$ and
   $Y\backslash Z$ are both $c$-closed. Let $\mathcal{B}_{c}(Y)$ denote the set of all $c$-biclosed subsets of $Y$.
    In particular, this defines $\mathcal{B}_{c_{2}}(T\times
    \set{1})$ and $\mathcal{B}_{c_{\Phi}}(T\times \set{1})$
    where $c_{\Phi}$ is the reduced closure operator on
    $T\times \set{\pm 1}$ obtained by transfer of the
    oriented matroid closure operator $\cone_{\Phi}$ of
    some realized root system $\Phi$ of $(W,S)$. Let $\Psi^{+}:=T\times\set{1}$ be the abstract positive system in $\Psi$. We shall
    call elements of $\mathcal{B}_{c_{2}}(\Psi^{+})$
     (resp., $\mathcal{B}_{c}(\Psi^{+})$ for a fixed realized root system
     $\Phi$) biclosed (resp., biconvex) sets (of $\Psi^{+}$).  From the definitions, one easily sees that
$\mathcal{B}_{c_{\Phi}}(\Psi^{+})\subseteq
 \mathcal{B}_{c_{2}}(\Psi^{+})$.

Most of the rest of this section is devoted to a proof of the following fact.
\begin{theorem}\label{biclosconv} There is  a finite rank Coxeter system $(W,S)$ such that  one has
$\bigcup_{\Phi}  \mathcal{B}_{c_{\Phi}}(\Psi^{+})\subsetneq \mathcal{B}_{c_{2}}(\Psi^{+})$ where the union is over all
realized root systems $\Phi$ of $(W,S)$.
\end{theorem}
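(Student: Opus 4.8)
The plan is to exhibit a single explicit Coxeter system $(W,S)$ together with a specific biclosed set $B\in \mathcal{B}_{c_{2}}(\Psi^{+})$ which fails to be biconvex for every realized root system $\Phi$ of $(W,S)$. Since $\mathcal{B}_{c_{\Phi}}(\Psi^{+})\subseteq \mathcal{B}_{c_{2}}(\Psi^{+})$ always holds, the only thing requiring proof is the strictness of the inclusion, which amounts to producing such a witness $B$ and verifying the two assertions: that $B$ is $c_{2}$-biclosed, and that $B$ is not $c_{\Phi}$-biclosed for any choice of realized root system. Because $c_{2}$ is independent of $\Phi$ (as recalled just before the theorem, via the cited lemmas), the first assertion is a single computation, whereas the second must be argued uniformly across all realizations.

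First I would choose $(W,S)$ of small rank where the geometry is transparent but where the cone closure $c_{\Phi}$ can genuinely depend on $\Phi$; a rank-four system of the shape used in the preceding subsection (with two infinite bonds, so that the radical geometry leaves room for the positive cone to twist) is the natural candidate. In such a system one has pairs of reflections whose associated roots can be made to lie on the same side or on opposite sides of an affine dependency as the NGCM parameters $c,d$ vary, which is exactly the phenomenon that earlier yielded $\sgn(ad-bc)$ taking all three values. I would then describe $B$ combinatorially as a set of positive reflections $T\times\set{1}$ closed under the $2$-closure: concretely, I would take $B$ to be generated under $c_{2}$ by a suitable finite collection of simple and non-simple reflections, chosen so that $B$ and $\Psi^{+}\setminus B$ are each $2$-closed. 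The verification that $B$ is $c_{2}$-biclosed is a finite check once the maximal dihedral reflection subgroups meeting $B$ are enumerated, using that $c_{2}$-closedness is controlled rank-two subsystem by rank-two subsystem.

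The heart of the argument, and the step I expect to be the main obstacle, is showing that no realized root system makes $B$ biconvex. The cone closure $c_{\Phi}$ sees genuine three-fold (and higher) linear dependencies among roots, not merely the dihedral (rank-two) ones that $c_{2}$ records, so $c_{\Phi}$-closedness of $B$ would force a sign condition on a determinant of the form $\det(\gamma_{1},\gamma_{2},\gamma_{3},\gamma_{4})$ built from roots of $B$ and its complement. The plan is to identify a specific quadruple of roots, one of whose coordinates involves the parameters $c,d$ (as in the $s_{\alpha_{1}}s_{\alpha_{4}}$ computation), such that $c_{\Phi}$-biclosedness of $B$ would require this determinant to have a fixed sign; but since that determinant is $\pm(ad-bc)$-type and the realized root system can realize either sign of it, one of $B$ or its complement fails to be cone-closed in every realization. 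The delicate point is to arrange $B$ so that the obstructing dependency is forced simultaneously by the closedness of $B$ and of $\Psi^{+}\setminus B$, i.e.\ so that whichever sign the determinant takes, one of the two halves acquires a root it should exclude. This requires a careful choice of which reflections to place in $B$ versus its complement, and a short case analysis over the possible signs of the relevant minors, after which the theorem follows.
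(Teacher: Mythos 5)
There is a genuine gap, and it sits exactly where you flag it: the construction of a single $c_{2}$-biclosed set $B$ that fails to be $c_{\Phi}$-biclosed for \emph{every} realized root system. Your proposed mechanism is to exploit a determinant such as $ad-bc$ whose sign varies with the realization, and to arrange $B$ so that ``whichever sign the determinant takes, one of the two halves acquires a root it should exclude.'' But a chirotope sign by itself does not produce a cone membership; to violate cone-closedness you need an actual positive circuit $\gamma=\sum_i c_i\gamma_i$ with $c_i>0$, i.e.\ a signed circuit on (in rank four) five roots, and the signature of that circuit is precisely what changes as $c,d$ vary. So the variation of the chirotope works \emph{against} uniformity: different realizations generically require different witnessing circuits, the degenerate case (determinant zero) must be handled separately, and nothing in the sketch shows that a single partition of $\Psi^{+}$ is violated in all cases simultaneously. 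You also give no evidence that your chosen rank-four system with infinite bonds admits any $c_{2}$-biclosed set that is not biconvex even for one realization --- that existence is itself the nontrivial part, since $c_{2}$ and $c_{\Phi}$ agree on all rank-two subsystems and a discrepancy requires a genuinely higher-rank positive dependency invisible to every maximal dihedral subsystem.

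The paper resolves the uniformity problem in the opposite way: instead of a system where the oriented matroid varies, it chooses the rank-four system with all bonds labelled $5$, in which the reflection subgroup $W_{\alpha}$ generated by roots orthogonal to a simple root $\alpha$ is an infinite-rank universal Coxeter group (Howlett's example). The obstruction is localized inside the flat $\Psi_{U}$, $U=W_{\alpha}$: a homotopy argument (as in Theorem \ref{rank3homotopy}) shows the restriction of $c_{\Phi}$ to $\Psi_{U}$ is \emph{independent} of $\Phi$, so the failure need only be checked once, for the standard root system. The explicit witness is the relation $\beta_{3}+\beta_{4}=(2\tau+1)(\beta_{1}+\beta_{2})$ among canonical simple roots of $W_{\alpha}$, which puts $s_{\beta_{1}}(\beta_{2})$ in $\cone(\{\beta_{1},\beta_{3},\beta_{4}\})$ while no dihedral subsystem detects this; taking $\Gamma'$ to be the positive system of $\langle s_{\beta_{1}},s_{\beta_{3}},s_{\beta_{4}}\rangle$ (automatically $c_{2}$-biclosed since $W_{\alpha}$ is universal) and adjoining $\{\gamma\in\Phi^{+}\mid(\gamma,\alpha^{\vee})<0\}$ gives the required set. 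If you want to complete your argument along your own lines, you would need to replace the sign-variation heuristic with an invariance statement of this kind, or else exhibit, for each possible chirotope signature of your circuit, a violated half of the \emph{same} set $B$ --- neither of which is present in the sketch.
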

\subsection{}  For the  proof of the theorem,
we shall require  a description
of the canonical simple roots of the root subsystem consisting of roots which
are orthogonal to a fixed simple root $\alpha$, in  case  the NGCM associated to a realized root system $\Phi$ is symmetric. This description may be  obtained for  general (even infinite rank) $(W,S)$
by comparison of results from \cite{BrCent} and \cite{BH2} as follows.

Define a groupoid (i.e. a small category in which all morphisms are
invertible) $G$ as follows. The set of objects of $G$ is
$\Pi$. For $\alpha,\beta\in \Pi$, a morphism $\alpha\to \beta$ is a triple
$(\beta,w,\alpha)$ where $w\in W$ satisfies $w(\alpha)=\beta$.
Composition of morphisms in $G$ is determined by the formula
$(\gamma,v,\beta)(\beta,w,\alpha)=(\gamma,vw,\alpha)$.

For any distinct $\alpha,\beta\in \Pi$ with
$m_{\alpha,\beta}:=m(s_{\alpha}s_{\beta})<\infty$, let
$\nu(\beta,\alpha):=w_{\alpha,\beta}s_{\alpha}$ where $w_{\alpha,\beta}$ is
the longest element of the finite standard parabolic subgroup
$\mpair{s_{\alpha},s_{\beta}}$. Then one has a morphism
$\nu'(\beta,\alpha)=(\gamma, \nu(\beta,\alpha),\alpha)\in G$ where
$\gamma:=\nu(\beta,\alpha)\alpha\in \set{\alpha,\beta}$. The following
observations, especially (1), are used frequently below:
\begin{enumerate}
\item[(1)] If $m:=m_{\alpha,\beta}$ is even,
$\nu(\beta,\alpha)=(s_{\beta}s_{\alpha})^{m/2-1}s_{\beta}$ is a reflection in a root orthogonal to $\alpha$, and $\gamma=\alpha$
\item[(2)] If $m$ is odd, then $\nu(\beta,\alpha)
=(s_{\alpha}s_{\beta})^{(m-1)/2}$ is of even length
and $\gamma=\beta$.
\item[(3)] In either case, $\nu'(\beta,\alpha)^{-1}=\nu'(\gamma',\gamma)$ where
$\set{\gamma,\gamma'}=\set{\alpha,\beta}$.
\end{enumerate}

It is easily seen (and a special case of well known results of Howlett and Deodhar)
that the groupoid $G$ is generated by the morphisms $\nu'(\beta,\alpha)$ for distinct
$\alpha,\beta\in \Pi$ with $m_{\alpha,\beta}$ finite. In \cite{BH2}, where relations for $G$
in terms of these generators are given, the generators
$\nu'(\beta,\alpha)\colon \alpha\to \beta$ with $m_{\beta,\alpha}$ odd are called \emph{movers}
and the other generators, of the form $\nu'(\beta,\alpha)\colon \alpha\to \alpha$ with $m_{{\alpha,\beta}}$ even,
are called \emph{shakers}, though ``fixers'' would be more apt in this rank one case. Let $M$ be the
subgroupoid of $G$ generated by all the movers.

For $\alpha\in \Pi$, let $G_{\alpha}:=\Hom_{G}(\alpha,\alpha)$. The map $(\alpha,w,\alpha)\mapsto w$
induces an isomorphism of $G_{\alpha}$ with the stabilizer $\mset{w\in W\mid w\alpha=\alpha}$ of
$\alpha$ in $W$. We regard this as an identification below. Let $M_{\alpha}:=\Hom_{M}(\alpha,\alpha)$,
which is a subgroup of $G_{\alpha}$.
Also, let $S''_{\alpha}:=\mset{\nu'(\beta,\alpha)\mid \beta\in \Pi, m_{\alpha,\beta}\,\text{\rm is even}}$ denote
the set of all shakers with $\alpha$ as domain (and codomain). Set $T'_{\alpha}:=\mset{g^{-1}sg\mid \beta\in \Pi,
g\in \Hom_{G}(\alpha,\beta), s\in S''_{\beta}}$, $W'_{\alpha}:=\mpair{T'_{\alpha}}\subseteq G_{\alpha}$ and
$S'_{\alpha}:=\mset{g^{-1}sg\mid\beta\in \Pi, g\in \Hom_{M}(\alpha,\beta), s\in S''_{\beta}}\subseteq  T'_{\alpha}$.

For $\gamma\in \Phi^+$ define $\gamma^{\perp}=\{\rho\in \Phi\mid (\rho,\gamma^{\vee})=(\gamma,\rho^{\vee})=0\}$.
Also let $T_{\alpha}:=\mset{s_{\beta}\mid \beta\in \Phi\cap \alpha^{\perp}}\subseteq  T$,
$W_{\alpha}=\mpair{T_{\alpha}}\subseteq W$ and $S_{\alpha}:=\chi(W_{\alpha})$.  The point of the following result is to show that two natural sets of Coxeter group generators for the reflection subgroup $W_{\alpha}$ generated by all roots orthogonal to a fixed simple root $\alpha$ coincide.

\begin{proposition}\label{rootcent}
\begin{enumerate}
\item[(a)] $G_{\alpha}=M_{\alpha}\ltimes W'_{\alpha}$ (a semidirect product of groups
with $W'_{\alpha}$ normal in $G_{\alpha}$).

\item[(b)] $M_{\alpha}$ is a free group (isomorphic to the fundamental group based at
$\alpha$ of the ``odd Coxeter graph'', on vertex set $\Pi$, of $(W,S)$).

\item[(c)] $(W_{\alpha}',S_{\alpha}')$ is a Coxeter system with reflections $T'_{\alpha}$
and the natural (conjugation) action of $M_{\alpha}$ on $W'_{\alpha}$ from (a) fixes the set $S'_{\alpha}$ and hence $T'_{\alpha}$.

\item[(d)] One has $W'_{\alpha}=W_{\alpha}$, $T'_{\alpha}=T_{\alpha}$ and $S'_{\alpha}=S_{\alpha}$.
\end{enumerate}
\end{proposition}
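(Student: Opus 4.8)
The plan is to derive parts (a), (b) and (c) by specializing the Brink--Howlett theory of the groupoid $G$ from \cite{BH2}, and then to establish (d) as a comparison of that abstract groupoid description with the geometric centralizer description of \cite{BrCent}. For (a)--(c) I would quote \cite{BH2}, which already constructs $G$ from its mover and shaker generators, proves that the vertex group $M_\alpha$ of the mover subgroupoid is free and realizes the fundamental group of the odd Coxeter graph, establishes the semidirect decomposition $G_\alpha=M_\alpha\ltimes W'_\alpha$ with $W'_\alpha$ normal, and shows $(W'_\alpha,S'_\alpha)$ is a Coxeter system with reflection set $T'_\alpha$ stabilized by $M_\alpha$. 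The remaining work for these parts is purely to match the generating sets $S''_\beta$, $S'_\alpha$, $T'_\alpha$ defined here with those of \cite{BH2}, and to note that the symmetric-NGCM hypothesis places us in the geometric setting of that theory: the pairing $(-,-)$ is then symmetric, so $\alpha^\perp$ is unambiguous and orthogonality is a symmetric relation, just as for the standard geometric representation.

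Turning to (d), I would first dispose of the inclusion $T'_\alpha\subseteq T_\alpha$. By observation (1), each shaker $s=\nu'(\beta',\beta)\in S''_\beta$ is the reflection $s_\rho$ in a root $\rho$ orthogonal to $\beta$; and for $g=(\beta,w,\alpha)\in\Hom_G(\alpha,\beta)$, so that $w\alpha=\beta$, the conjugate $g^{-1}sg$ is the reflection $s_{w^{-1}\rho}$, whose root $w^{-1}\rho$ is orthogonal to $w^{-1}\beta=\alpha$. Hence $g^{-1}sg\in T_\alpha$, and ranging over all shakers and all such $g$ yields $T'_\alpha\subseteq T_\alpha$ and $W'_\alpha\subseteq W_\alpha$. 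Conversely $s_\beta(\alpha)=\alpha$ for every $\beta\in\Phi\cap\alpha^\perp$, so $W_\alpha\subseteq G_\alpha$; thus $W_\alpha$ and $W'_\alpha$ are both subgroups of $G_\alpha$, and by (a) every reflection $s_\gamma\in W_\alpha$ factors uniquely as $s_\gamma=m\cdot w'$ with $m\in M_\alpha$ and $w'\in W'_\alpha$.

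The crux, which I expect to be the main obstacle, is the reverse inclusion $W_\alpha\subseteq W'_\alpha$ together with the precise matching of generators. Here I would invoke the explicit description in \cite{BrCent} of the canonical generators of the orthogonal reflection subgroup $W_\alpha$, showing that Brink's generators coincide with the shaker-conjugates comprising $S'_\alpha$; this forces $W_\alpha=\langle T'_\alpha\rangle=W'_\alpha$ and, in the factorization above, forces the free-part projection $m$ of each reflection to be trivial. The delicate point is reconciling the inductive centralizer conventions of \cite{BrCent} with the groupoid conventions of \cite{BH2} so that the two lists of generators are literally identified; once this is done, $T'_\alpha=T_\alpha$ is immediate, and $S'_\alpha=S_\alpha=\chi(W_\alpha)$ follows because the canonical generating set of a reflection subgroup is determined intrinsically by its reflection set, which by (c) is $T'_\alpha=T_\alpha$.
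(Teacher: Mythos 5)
Your setup for (a)--(c) and your proof of the inclusions $S''_{\alpha}\subseteq S'_{\alpha}\subseteq T'_{\alpha}\subseteq T_{\alpha}$, hence $W'_{\alpha}\subseteq W_{\alpha}$, match the paper. But you miss the one-line argument that finishes $W_{\alpha}=W'_{\alpha}$: \cite{BrCent} gives $G_{\alpha}=M_{\alpha}\ltimes W_{\alpha}$ and \cite{BH2} gives $G_{\alpha}=M_{\alpha}\ltimes W'_{\alpha}$, so for $w\in W_{\alpha}$ the factorization $w=mw'$ with $w'\in W'_{\alpha}\subseteq W_{\alpha}$ forces $m\in M_{\alpha}\cap W_{\alpha}=1$, i.e.\ $W_{\alpha}\subseteq W'_{\alpha}$. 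No matching of Brink's generators against the shaker-conjugates is needed for this, and routing the equality of groups through that matching (which you yourself flag as the delicate point and do not carry out) makes the argument fragile where it need not be.

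The genuine gap is in your last step. The claim that $S'_{\alpha}=S_{\alpha}$ ``because the canonical generating set of a reflection subgroup is determined intrinsically by its reflection set'' is a non sequitur: $S_{\alpha}=\chi(W_{\alpha})$ is indeed determined by $W_{\alpha}$ inside $(W,S)$, but $S'_{\alpha}$ is a priori just \emph{some} Coxeter generating set of $(W'_{\alpha},T'_{\alpha})$ consisting of reflections, and such sets are far from unique (already an infinite dihedral reflection subgroup has infinitely many of them). Knowing $W'_{\alpha}=W_{\alpha}$ and $T'_{\alpha}=T_{\alpha}$ does not identify $S'_{\alpha}$ with the canonical one. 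The paper closes this by tracing the construction in \cite{BH2} of the realized root system $\Upsilon$ for $W'_{\alpha}$ inside $\alpha^{\perp}$: in this rank-one case the simple root attached to $g^{-1}sg\in S'_{\alpha}$ is the honest positive root $g^{-1}(\delta)\in\Phi^{+}$ (where $s=s_{\delta}$), so $\Upsilon$ is a root subsystem of $\Phi$ whose simple system lies in $\Phi^{+}$, and \cite{DyerReflSubgrp} then identifies that simple system with $\chi(W_{\alpha})=S_{\alpha}$. Some argument of this kind --- exhibiting the $S'_{\alpha}$-roots as a positive simple system for the root subsystem $\Phi\cap\alpha^{\perp}$ --- is required; your proposal as written does not supply it.
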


\begin{proof} With the above identification of $G_{\alpha}$ with the centralizer of $\alpha$
in $W$, it follows easily from (a) above that $S''_{\alpha}\subseteq  S'_{\alpha}\subseteq
T'_{\alpha}\subseteq T_{\alpha}$ and hence $W'_{\alpha}\subseteq W_{\alpha}$.
From \cite{BrCent}, one has $G_{\alpha}=M_{\alpha}\ltimes W_{\alpha}$ and (b),
while from \cite{BH2}, (as the rank one case of results in any rank) one has
$G_{\alpha}=M_{\alpha}\ltimes W'_{\alpha}$ and (c). This implies $W_{\alpha}=W'_{\alpha}$.

In the proof of (c) in \cite{BH2}, a realized root system $\Upsilon$ for $W'_{\alpha}$
in the real vector space $\alpha^{\perp}$ is constructed,
whose roots are certain positive scalar multiples of projections of certain roots
from $\Phi$ on $\alpha^{\perp}$ and whose set of reflections is $T_{\alpha}'$.
Tracing through the construction of the simple roots $\Delta$ for $\Upsilon$ (we omit details),
it follows using (1) above that the simple root corresponding to the simple reflection
$g^{-1}sg\in S'_{\alpha}$, where $\beta\in \Pi$, $g\in \Hom_{M}(\alpha,\beta)$ and
$s\in S''_{\beta}$ is $g^{-1}(\delta)\in \Phi^{+}$ where $\delta\in \Phi^{+}$ with $s_{\delta}=s$.

It follows that $\Upsilon$ is (in this special rank one situation, though not in general)
a root subsystem of $\Psi$. Further, $\Delta$, as a basis of simple roots for this subsystem
contained in the positive roots $\Phi^{+}$, corresponds to $\chi(W')$ by \cite{DyerReflSubgrp}.
It follows that $S'_{\alpha}=S_{\alpha}$ and hence $T'_{\alpha}=T_{\alpha}$ since
they are both the sets of reflections of $(W_{\alpha},S_{\alpha})=(W'_{\alpha},S'_{\alpha})$. This completes the proof.
\end{proof}

\subsection{} We remark that in general, the listed elements
$g^{-1}sg$ in the set defining $S'_{\alpha}$ are not
necessarily distinct. However, in our application below,
$(W,S)$ has no finite standard parabolic subgroups of rank
three. This implies that, in the terminology of
\cite{BH2}, the above-mentioned presentation of
the groupoid $G$ has no $(R2)$ relations i.e. only involves
$(R1)$ relations (as in (c)). This assures further that the elements
$g^{-1}sg$ in the set defining $S'_{\alpha}$ are pairwise
distinct and that $(W_{\alpha},S_{\alpha})$ has no nontrivial
braid relations i.e. is a universal Coxeter system.

Theorem \ref{biclosconv}  is equivalent to the statement below.
 \begin{proposition} There is a finite rank Coxeter system $(W,S)$ with a $c_{2}$-biclosed
subset $\Xi$ of $\Psi^{+}$ such that, for every realized root system $\Phi$ of $(W,S)$, with canonical surjection $\pi\colon \Phi\to \Psi$,
$\pi^{-1}(\Xi)$ is not a $\cone_{\Phi}$-biclosed subset of $\Phi^{+}$.
\end{proposition}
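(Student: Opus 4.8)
The plan is to take $(W,S)$ to be the rank-four system with the displayed $4$-cycle Coxeter graph, choosing $m,n$ to be even and large enough that $\tfrac1m+\tfrac1n\le\tfrac12$ (for instance $m=n=4$), so that, as noted in the text, no rank-three standard parabolic subgroup is finite and there are no odd bonds. By Proposition \ref{rootcent} and the remark following it, the reflection subgroup $W_{\alpha_{1}}$ generated by all roots orthogonal to the simple root $\alpha_{1}$ is then a universal Coxeter system of rank three, and the cited results furnish an explicit canonical simple system $S_{\alpha_{1}}\subseteq\Phi^{+}$ together with an explicit list of its roots. I would use this description, which is combinatorial and independent of the realization, to enumerate the roots and the rank-two subsystems entering the construction of $\Xi$ and to keep the verifications below finite.

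First I would isolate the source of realization dependence. On any rank-two (dihedral) subsystem, biclosed and biconvex subsets coincide and are realization independent (as recalled at the end of \S2), so the only way $\cone_{\Phi}$-biclosedness can depend on $\Phi$ is through genuinely three-dimensional cone relations, in which a positive root lies in $\cone$ of three roots but in no pairwise $\cone$ of two of them. The computation of \S5.1 is of exactly this type: with $\beta:=s_{\alpha_{1}}s_{\alpha_{4}}(\alpha_{2})=a\alpha_{1}+\alpha_{2}+c\alpha_{4}$ and $\gamma:=s_{\alpha_{1}}s_{\alpha_{4}}(\alpha_{3})=b\alpha_{1}+\alpha_{3}+d\alpha_{4}$, the orientation of the tuple $(\beta,\alpha_{2},\alpha_{3},\gamma)$ is governed by $\sgn(ad-bc)$, and which side of the relevant plane a root falls on flips with this sign. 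I would assemble $\Xi$ from finitely many such configurations and their $W$-translates, chosen so that in each sign regime at least one three-term relation forces a root of $\cone_{\Phi}(\pi^{-1}(\Xi))$ whose image under $\pi$ lies outside $\Xi$ (or, symmetrically, forces a root into the closure of the complement).

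Next I would verify that $\Xi\in\mathcal B_{c_{2}}(\Psi^{+})$. Since $c_{2}$ is independent of $\Phi$ and is generated purely by rank-two (planar) relations, this reduces to confirming that neither $\Xi$ nor $\Psi^{+}\setminus\Xi$ is enlarged by any planar cone relation. This is a finite combinatorial check which I would carry out in one convenient realization, using the explicit enumeration of roots and dihedral subsystems from Proposition \ref{rootcent}; the universality of $W_{\alpha_{1}}$ is what makes this enumeration manageable, since it pins down exactly which pairs of roots span rank-two subsystems containing further roots.

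Finally I would rule out every realization at once. The NGCMs of $(W,S)$ form a connected family (Lemma \ref{pathconnect}), and rescaling does not change the oriented matroid (Lemma \ref{rescale}); over this family the determinants such as $ad-bc$ realize all three signs. The design of $\Xi$ must therefore produce a biconvexity failure in each of the regimes $ad-bc>0$, $ad-bc<0$, and $ad-bc=0$, the degenerate case being handled by the relation $d\beta+c\alpha_{3}=c\gamma+d\alpha_{2}$, which I would check still separates $\Xi$ from its complement incorrectly. \textbf{The main obstacle} is precisely this uniformity: one must exhibit a \emph{single} $\Xi$ that is $c_{2}$-biclosed (a realization-independent, purely rank-two condition) and is nevertheless defeated by a three-dimensional cone relation for \emph{every} NGCM, including non-symmetric ones and the degenerate locus. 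The leverage for overcoming it is that Proposition \ref{rootcent} provides enough combinatorial room inside the universal subsystem $W_{\alpha_{1}}$ to place the configurations so that both nondegenerate sign regimes are obstructed, while the explicit root formulae keep the rank-two closure under control so that $c_{2}$-biclosedness survives.
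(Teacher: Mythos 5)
Your proposal has several genuine gaps, the most serious being the choice of Coxeter system. You take the graph of \S 5.1 with $m,n$ even (e.g.\ $m=n=4$), explicitly so that there are no odd bonds; but with no odd bonds the groupoid of movers is trivial, so by Proposition \ref{rootcent} the canonical simple system of $W_{\alpha_{1}}$ consists only of the (three) shakers at $\alpha_{1}$, and $W_{\alpha_{1}}$ is a rank-three reflection subgroup with linearly independent simple roots --- not the infinite rank universal group your argument assumes. The mechanism that actually produces a $c_{2}$-biclosed, non-biconvex set needs \emph{more} canonical simple roots of $W_{\alpha}$ than the dimension of $\alpha^{\perp}$: the intended example is the $4$-cycle with all bonds labelled $5$, where $W_{\alpha}$ has infinitely many canonical simple roots $\beta_{i}$ lying in a three-dimensional space, forcing a positive linear dependency ($\beta_{3}+\beta_{4}=(2\tau+1)(\beta_{1}+\beta_{2})$) which yields $s_{\beta_{1}}(\beta_{2})\in\cone(\set{\beta_{1},\beta_{3},\beta_{4}})$; one then takes $\Xi'$ to be the union of $\mset{\gamma\in\Phi^{+}\mid(\gamma,\alpha^{\vee})<0}$ with the positive system of the standard parabolic $\mpair{s_{\beta_{1}},s_{\beta_{3}},s_{\beta_{4}}}$ of the universal group $W_{\alpha}$. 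Your proposal never exhibits a concrete $\Xi$, and your plan to verify $c_{2}$-biclosedness by ``a finite combinatorial check'' cannot work as stated, since any candidate $\Xi$ is infinite; the verification requires the structural trichotomy on the sign of $(\gamma,\alpha^{\vee})$ together with the fact (from \cite{DyHS1}) that positive systems of standard parabolic subgroups are biclosed.

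The second gap is the uniformity over all realizations, which you correctly identify as the main obstacle but do not overcome: designing $\Xi$ separately for each sign regime of determinants like $ad-bc$ is not carried out and is not the right strategy, since a single $\Xi$ must fail for every $\Phi$ simultaneously. The resolution is to show that the restriction of $c_{\Phi}$ to the orthogonal subsystem $\Psi_{U}$ (with $U=W_{\alpha}$) is independent of $\Phi$: one checks that $\Phi_{U}$ is a flat, that the simple roots of any realization are forced to be linearly independent (because $\Phi_{U}$ spans a three-dimensional subspace not containing $\alpha_{1}$), and then runs a homotopy argument in the spirit of Lemma \ref{pathconnect} and Theorem \ref{rank3homotopy} on determinants $\det(\alpha_{1},\gamma_{2},\gamma_{3},\gamma_{4})$ with $\gamma_{2},\gamma_{3},\gamma_{4}\in\Phi_{U}$, whose vanishing is characterized combinatorially and hence is constant along the path of NGCMs. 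This reduces the whole problem to a single computation in the standard realization, which is exactly the step missing from your outline.
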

\begin{proof}
To prove the theorem, we take for $(W,S)$ a finite rank Coxeter system,
mentioned to the first author by Bob Howlett, in which the
reflection subgroup generated by reflections in roots
orthogonal to some (in fact, any) fixed root $\alpha$ is an
infinite rank universal Coxeter group. We then prove the
assertion of the theorem in the case of the standard
root system $\Phi$. Finally, we deduce the assertion of
the theorem for arbitrary $\Phi$ by a homotopy argument similar to that in the proof of Theorem \ref{rank3homotopy}.

Consider the rank four Coxeter system $(W,S)$
with $S=\mset{s_{i}\mid i=1,2,3,4}$ and Coxeter graph
\begin{equation*}
\xymatrix{
{s_{1}}\ar@{-}[r]^{5}\ar@{-}[d]_{5}&{s_{2}}\ar@{-}[d]^{5}\\
{s_{4}}\ar@{-}[r]_{5}&{s_{3}}
}
\end{equation*}
and its standard root system
$\Phi$ with simple roots $\mset{\alpha_{i}\mid 1=1,2,3,4}$.

  To take advantage of the symmetry,
take the indices modulo $4$ i.e set $s_{i+4n}:=s_{i}$ and $\alpha_{i+4n}:=\alpha_{i}$ for any integers $1\leq i\leq 4$ and $n$.

All the roots are $W$-conjugate, so we consider for
definiteness the simple root $\alpha=\alpha_{1}$ and the
corresponding reflection group $W_{\alpha}$ as defined
above. Let $x_{i}:=s_{i}s_{i+1}s_{i}s_{i+1}$ for $i\in \mathbb{Z}$
and $w:= x_{4}x_{3}x_{2}x_{1}$ in $W$. From Proposition
\ref{rootcent}, one sees that $M_{\alpha}=\mpair{w}\cong \mathbb{Z}$ and
that the canonical simple system $\Delta\subseteq  \Phi^{+}$ for $W_{\alpha}$ is
\begin{equation*}
\Delta=\dot\bigcup_{n\in \mathbb{Z}}w^{n}\Delta', \quad \Delta':=\set{\beta_{1}:=\alpha_{3},\beta_{2}:=x_{4}\alpha_{2},
\beta_{3}:=x_{1}^{-1}\alpha_{4},\beta_{4}:=x_{4}x_{3}\alpha_{1}}.
\end{equation*}
In fact, $M_{\alpha}$ acts freely on $\Delta$ with $\Delta'$ as a set of
(four) orbit representatives which we now describe explicitly for subsequent use.

Let $\tau:=2\cos\frac{\pi}{5}=\frac{1+\sqrt{5}}{2}$ be the
golden ratio, so $\tau^{2}=\tau+1$. We have
$s_{\alpha_{i}}\alpha_{i\pm 1}=\alpha_{i\pm 1}+
\tau\alpha_{i}$, $s_{\alpha_{i}}(\alpha_{i+2})=\alpha_{i+2}$
and of course $s_{\alpha_{i}}(\alpha_{i})=-\alpha_{i}$. From
this, one computes $\beta_{1}=\alpha_{3}$,
$\beta_{2}=(1+2\tau)\alpha_{1}+\alpha_{2}+(1+2\tau)
\alpha_{4}$, $\beta_{3}=(1+2\tau)\alpha_{1}+(1+2\tau)
\alpha_{2}+\alpha_{4}$, $\beta_{4}=
(6\tau+4)\alpha_{1}+(2\tau+1) \alpha_{3}+
(8\tau+4)\alpha_{4}$.

Maintain the notation above. We give an example of a $c_{2,\Phi}$-biclosed subset $\Xi'$ of $\Phi^{+}$ which is not
$\cone_{\Phi}$-biclosed.
Note that
$\beta_{3}+\beta_{4}=(2\tau+1)(\beta_{1}+\beta_{2})$.
Also, $s_{\beta_{1}}(\beta_{2})=\beta_{2}+
(4\tau+2)\beta_{1}$ so
$(2\tau+1)s_{\beta_{1}}(\beta_{2})=
\beta_{3}+\beta_{4}+(2\tau+1)(4\tau+1)\beta_{1}$.
This shows that $s_{\beta_{1}}(\beta_{2})\in
\cone(\set{\beta_{1},\beta_{3},\beta_{4}})$.

Now let $\Xi':=\Gamma\cup \Gamma'$
where $\Gamma=\mset{\gamma\in \Phi^{+}\mid (\gamma,\alpha^{\vee})<0}$ and
$\Gamma'$ is any  $c_{2, \Phi_{W_{\alpha}}}$-biclosed subset of  $\Phi_{W_{\alpha}}^{+}$.
 We claim  that $\Xi'\in \mathcal{B}_{c_{2,\Phi}}(\Phi^{+})$. To see this, let $\Gamma'':=\mset{\gamma\in \Phi^{+}\mid (\gamma,\alpha^{\vee})>0}$ and consider $\delta,\beta,\gamma\in \Phi^{+}$ such that   $\gamma=b\beta+d \delta$ for some $b,d>0$. We have to show that if $\delta$ and $\beta$ are in $\Xi'$ (resp., $\Phi^{+}\setminus \Xi'$) then $\gamma$ is in $\Xi'$ (resp.,  $\Phi^{+}\setminus \Xi'$).  But this holds since if either $\delta$ or $\beta$ is in $\Gamma$ (resp., $\Gamma''$), then so is $\gamma$ since $\Phi_{W_{\alpha}}=\mset{\epsilon\in \Phi^{+}\mid (\epsilon,\alpha^{\vee})=0}$, while if both $\delta$ and $\beta$ are in $\Gamma'$ (resp., $\Phi^{+}_{W_{\alpha}}\setminus \Gamma'$) then so is $\gamma$ since $\Gamma'$ is $c_{2,\Phi_{W_{\alpha}}}$-biclosed in $\Phi_{W_{\alpha}}^{+}$.

   Take for $\Gamma'$,
 using \cite[Proposition 2.3]{DyHS1}, the positive root system of the standard parabolic subgroup
$\mpair{s_{\beta_{1}},s_{\beta_{3}},s_{\beta_{4}}}$ of $W_{\alpha}$. Then $\Xi'\in \mathcal{B}_{c_{2},\Phi}(\Phi^{+})$.
However, $\Xi'$ is not $\cone_{\Phi}$-biclosed since
from above, $s_{\beta_{1}}(\beta_{2})\in \cone(\Xi')\cap (\Phi^{+}\backslash \Xi')$.
Note for later use that this argument implies that $\Xi'\cap \Phi_{W_{\alpha}}$
is not $\cone_{\Phi_{W_{\alpha}}}$-biclosed in $\Phi_{W_{\alpha}}^{+}$.

Let $\Psi$ be the abstract root system of
$(W,S)$. We transfer the $\cone_{\Phi}$-biclosed subset $\Xi'$ of $\Phi$
to a subset $\Xi$ of $\Psi$ via the canonical bijection
$\pi\colon \Phi\to \Psi$, setting
$\Xi=\pi(\Xi')=
\mset{(s_{\alpha},1)\mid \alpha\in \Xi'}\subseteq  \Psi$.   Also, let $U:=W_{\alpha}$.

Now we change notation to allow, more
generally, $\Phi$ to be any realized root system of
$(W,S)$ and $\pi\colon \Phi\to \Psi$ to denote the
canonical $W$-equivariant surjection. (Similarly to the case of $\widetilde A_{n},n\geq 2$, such realized root
systems are parametrized up to rescaling and isomorphism by elements of $\mathbb{R}_{>0}$).
Let $\delta_{i}=(s_{i},1)$ for $i=1,\ldots, 4$ be the simple roots of $\Psi$. The oriented
matroid structure on $\Phi$ induces by transfer via
$\pi$ a structure of reduced oriented matroid
on $\Psi$, with closure operator $c_{\Phi}=c$ given by
$c(\Gamma)=\pi(\cone_{\Phi}(\pi^{-1}(\Gamma)))$ for any $\Gamma\subseteq  \Psi$.
As the notation suggests,
$c_{\Phi}$ could possibly depend on the choice of $\Phi$, though we do not know if it actually does (and it does not matter for our purposes).
We shall show however that
\begin{itemize}\item[($*$)] $\Psi_{U}$ is $c_{\Phi}$-closed and the restriction $c_{U}$ of $c_{\Phi}$ to a closure operator on $\Psi_{U}$ is
independent of the choice of $\Phi$.\end{itemize}

This will imply that $\Xi'':=\pi^{{-1}}(\Xi)$ is not
$\cone_{\Phi}$-biclosed in $\Phi^{+}$. For suppose to the
contrary that $\Xi''$ is $\cone_{\Phi}$-biclosed in $\Phi^{+}$. Then
$\Xi''\cap \Phi_{U}$ would be $\cone_{\Phi_{U}}$
biclosed in $\Phi_{U}^{+}$. The corresponding subset
$\pi(\Xi''\cap \Phi_{U})=\Xi\cap \Psi_{U}$ would then be $c_{\Phi}$-biclosed (i.e. $c_{U}$-biclosed) in
$\Psi_{U}$. But then it would be $c_{\Phi}$-biclosed in the case $\Phi$ is the
standard root system of $(W,S)$ (by ($*$)) and we have seen already that it is not.

We now turn to the proof of the key point ($*$). Denote the simple root of $\Phi$
corresponding to $s_{i}$ as $\alpha_{i}$ as before, for $i=1,\ldots, 4$.
We have \begin{equation*}\begin{split}
\Phi_{U}&=\pi^{-1}(\Psi_{U})=\mset{\beta\in \Phi\mid s_{\beta}(\alpha_{1})\in \mathbb{R}_{>0}\alpha_{1}}=
\mset{\beta\in \Phi\mid s_{\beta}(\alpha_{1})=\alpha_{1}}
\\&=\mset{
\beta\in \Phi\mid (\alpha_{1},\beta^{\vee})=0}
=\mset{
\beta\in \Phi\mid (\beta,\alpha_{1}^{\vee})=0}.
\end{split}\end{equation*}

So $\Phi_{U}$ is clearly a flat of $(\Phi,*,\cone_{\Phi})$, which implies $\Psi_{U}=\pi(\Phi_{U})$
is a flat of the corresponding reduced oriented matroid $(\Psi,*,c)$.
In particular, $\Phi_{U}$ is $c$-closed.

Next we observe that, since $U$ is an infinite rank reflection subgroup, it is not of rank two or less and its
root system $\Phi_{U}$ must therefore span a subspace
of dimension three or greater. Since that subspace does
not contain $\alpha_{1}$, it follows that its dimension is
exactly three and that the simple roots of $\Phi$ must
be linearly independent.

Hence $\Phi$ is completely determined, as a subset
of the real vector space with
$\Pi=\mset{\alpha_{i}\mid i=1,2,3,4}$ as basis, by its
NGCM. Suppose $A$ and $B$ are two NGCMs for
$(W,S)$. Choose by Lemma \ref{pathconnect} a continuous map
$\theta\colon [0,1]\to \mathbb{R}^{4\times 4}$ with
NGCMs for values, with $\theta(0)=A$ and
$\theta(1)=B$. We consider four roots $\gamma_{i}=w_{i}(\delta_{j_{i}})$
for $i=1,2,3,4$ such that $w_{1}=1$, $j_{1}=1$ (so
$\gamma_{1}=\delta_{1}$) and
$\gamma_{2},\gamma_{3},\gamma_{4}\in \Psi_{U}$.
We have corresponding roots
$(w_{i}(\alpha_{j_{i}}))_{t}$ in the root system  $\Phi(t)$
with NGCM $\theta(t)$ and simple roots
 $\alpha_i$ for $i=1,\ldots 4$.
We claim that the sign of $a_{t}:=\det( (w_{1}(\alpha_{j_{1}}))_{t},
\ldots,w_{4}(\alpha_{j_{4}}))_{t} )\in \mathbb{R}$ is independent of $t\in [0,1]$.

Observe that  $(w_{i}(\alpha_{j_{i}}))_{t}\in \Phi(t)_{U}$  if
and only if $i=2,3,4$. Since
 $\Phi(t)_{U}=\{\, \beta\in \Phi(t)\mid (\beta, \alpha_{1}^{\vee})=0\}$
 is the intersection of  $\Phi(t)$
with a subspace of dimension three, one has $a_{t}=0$ if and only if two
of the roots $(w_{i}(\alpha_{j_{i}}))_{t}$ for $i=2,3,4$
span the same ray (i.e. two abstract roots
$(w_{i}(\delta_{j_{i}}))$ for $i=1,\ldots, 4$ coincide or differ only by sign) or these
three roots span a plane (i.e. the corresponding
abstract roots $(w_{i}(\delta_{j_{i}}))$ for $i=2,3,4$ lie in a maximal dihedral root
subsystem of $\Psi$). Since the formulations in terms of $\Psi$ are independent of $t$,
it follows that if $a_{t}=0$ for one $t\in [0,1]$, then $a_{t}=0$ for all $t\in [0,1]$.
Since the map $t\mapsto a_{t}$ is continuous, the sign $\sgn(a_{t})$ is independent
of $t$. Since $\gamma_{2},\gamma_{3},\gamma_{4}\in \Psi_{U}$ are arbitrary, it follows that the map
$(w_{2}(\delta_{j_{2}}),
\ldots,w_{4}(\delta_{j_{4}}))\mapsto \sgn(\det((w_{1}(\alpha_{j_{1}}))_{t},
\ldots,w_{4}(\alpha_{j_{4}}))_{t} )$ defines a chirotope for $\Psi_{U}$ corresponding
to the restriction of  $c_{\Phi(t)}$  to a closure operator on $\Psi_{U}$ and that
it is independent of $t$ as claimed. In particular, any two NGCMs $A$, $B$ for $(W,S)$
give the same oriented matroid $(\Psi_{U},*,c_{U})$ as asserted.
\end{proof}
\subsection{Oriented matroid root systems} One may regard Theorem \ref{biclosconv} as   showing that  natural combinatorially defined notions
of $c_{2}$-biclosed sets and initial sections of reflection orders do not correspond well to the geometry of realized root systems.

 Define a (reduced) oriented matroid root system for a Coxeter system $(W,S)$ to be  a reduced oriented matroid
 $(\Psi,-,d)$   on the abstract root system $\Psi$ of $(W,S)$
  such that $d$ is $W$-equivariant (i.e. $W$ acts on
  $\Psi$ by automorphisms of this oriented matroid),
  $\Psi^{+}$ is $d$-closed and  for each maximal dihedral
  reflection subgroup $W'$ of $W$, the corresponding root
   subsystem $\Psi_{W'}:=(W'\cap T)\times \set{\pm 1}$ of $\Psi$ is $d$-closed. It is easily seen that any $(\Psi,-,c_{\Phi})$ is an oriented matroid root system in this sense; the first author will give basic properties of oriented matroid root systems in general elsewhere. It is natural to ask whether one may have, say, $\mathcal{B}_{c_{2}}(\Psi^{+})=\mathcal{B}_{d}(\Psi^{+})$ for some such $d$
   or $\mathcal{B}_{c_{2}}(\Psi^{+})=\bigcup_{d}\mathcal{B}_{d}(\Psi^{+})$ where the union is over all oriented matroid root systems $(\Psi,-,d)$. An even more basic open question is whether there exist any non-realizable $d$ (i.e.  $d$ as above which are not of the form $c_{\Phi}$ for any realizable root system $\Phi$). One may conjecture on the basis of Theorem \ref{rank3homotopy} that there is a unique oriented matroid root system for $(W,S)$ of rank three.
 However, we conjecture that  there are finite rank Coxeter systems for which such non-realizable $d$ exist (possibly even  in great abundance).  The question of whether they exist for irreducible  affine Weyl groups is an interesting and accessible one.


\end{document}